%
\documentclass[12pt, reqno]{amsart}
\usepackage{amsmath, amsthm, amscd, amsfonts, amssymb, graphicx, color, mathrsfs}
\usepackage[bookmarksnumbered, colorlinks, plainpages]{hyperref}
\usepackage[all]{xy} 
\usepackage{slashed}
\usepackage[export]{adjustbox}
\usepackage{wrapfig}

\setlength{\textwidth}{15.2cm}
\setlength{\textheight}{22.7cm}
\setlength{\topmargin}{0mm}
\setlength{\oddsidemargin}{3mm}
\setlength{\evensidemargin}{3mm}
\setlength{\footskip}{1cm}


\newtheorem{theorem}{Theorem}[section]
\newtheorem{lemma}[theorem]{Lemma}

\newtheorem{proposition}[theorem]{Proposition}
\newtheorem{corollary}[theorem]{Corollary}

\theoremstyle{definition}
\newtheorem{definition}[theorem]{Definition}

\newtheorem{assumption}[theorem]{Assumption}

\theoremstyle{remark}
\newtheorem{remark}[theorem]{Remark}
\numberwithin{equation}{section}

\begin{document}
\setcounter{page}{1}

\title[Weak (1,1) boundedness of FIOs with complex phases]{The weak (1,1) boundedness of Fourier integral Operators with complex phases}

\author[D. Cardona]{Duv\'an Cardona}
\address{
  Duv\'an Cardona:
  \endgraf
  Department of Mathematics: Analysis, Logic and Discrete Mathematics
  \endgraf
  Ghent University, Belgium
  \endgraf
  {\it E-mail address} {\rm duvanc306@gmail.com, duvan.cardonasanchez@ugent.be}
  }

\author[M. Ruzhansky]{Michael Ruzhansky}
\address{
  Michael Ruzhansky:
  \endgraf
  Department of Mathematics: Analysis, Logic and Discrete Mathematics
  \endgraf
  Ghent University, Belgium
  \endgraf
 and
  \endgraf
  School of Mathematical Sciences
  \endgraf
  Queen Mary University of London
  \endgraf
  United Kingdom
  \endgraf
  {\it E-mail address} {\rm michael.ruzhansky@ugent.be, m.ruzhansky@qmul.ac.uk}
  }

\thanks{The authors were supported  by the FWO  Odysseus  1  grant  G.0H94.18N:  Analysis  and  Partial Differential Equations and  by the Methusalem programme of the Ghent University Special Research Fund (BOF)
(Grant number 01M01021). Duv\'an Cardona was supported by the Research Foundation-Flanders
(FWO) under the postdoctoral
grant No 1204824N.  Michael Ruzhansky is also supported  by EPSRC grant 
EP/R003025/2.
}

     \keywords{Fourier Integral Operator, Complex Phase, Weak (1,1) continuity}
     \subjclass[2020]{35S30, 42B20; Secondary 42B37, 42B35}

\begin{abstract} Let $T$ be a Fourier integral operator of order $-(n-1)/2$ associated with a canonical relation locally parametrised by a real-phase function. A fundamental result due to Seeger, Sogge, and Stein proved in the 90's, gives the boundedness of $T$  from the Hardy space $H^1$ into $L^1.$ Additionally,   it was shown by  T. Tao the weak (1,1) type of $T$.  In this work, we establish the weak (1,1) boundedness of a Fourier integral operator $T$ of order $-(n-1)/2$ when it has associated a canonical relation parametrised by a complex phase function. This result in the complex-valued setting, cannot be derived from its counterpart in the real-valued case.
\end{abstract} 

\maketitle

\allowdisplaybreaks

\tableofcontents

\section{Overview} Motivated by the developments about complex canonical relations, see e.g. Laptev, Safarov and Vassiliev \cite{Laptev:Safarov:Vassiliev}, Melin and Sj\"ostrand \cite{Melin:Sjostrand1975}, and Martini and M\"uller \cite{MMNG2023}, 
in this paper we establish the weak (1,1) boundedness of Fourier integral operators with complex phases and of order $-(n-1)/2.$ Our approach combines two methods. First, we adapt to our setting the factorisation procedure developed by Tao \cite{Tao} in the proof of the weak (1,1) inequality of these operators in the setting of real-valued phase functions, and the other one came from the global parametrisation of the canonical relation associated to the operator with an appropriate complex phase as observed by the second author in \cite{Ruzhansky:CWI-book}, when extending to Fourier integral operators with complex phases the $H^1$-$L^1$ boundedness result established by Seeger, Sogge and Stein in \cite{SSS}. To present our main Theorem \ref{Main:theorem} we are going to introduce some preliminaries. We consider $\mathbb{R}^n,$ with $n\geq 2,$ and we denote by $$f\mapsto \widehat{f},\,\widehat{f}(\xi):=\smallint_{\mathbb{R}^n} e^{-2\pi ix\cdot \xi}f(x)dx,\,\,\xi\in \mathbb{R}^n,$$ the Fourier transform of an integrable function $f\in L^1(\mathbb{R}^n).$

\subsection{Outline} When working with the real methods of the harmonic analysis one can find three substantial categories related to the boundedness properties of operators: maximal averages, singular integrals, and oscillatory integrals.  In the latter category, oscillatory integrals provide a necessary tool for exploiting the geometric properties related to the notions of {\it curvature} and of {\it orthogonality}. As it was pointed out by Stein in \cite{SteinBook1993}, this family of objects is not easily classified and appears in multiple forms: Bochner-Riesz operators, variants of pseudo-differential operators and of Fourier multipliers, and in the main object treated here, {\it Fourier integral operators}. 

Fourier integral operators appear as solutions to hyperbolic differential equations and then their boundedness properties have a direct relation with the validity of {\it a priori estimates} for these equations. As it was shown e.g. in \cite{Ruzhansky:CWI-book,SSS,Tao},  the geometry and the singularities of the canonical
relation of an operator are reflected in the boundedness properties of operators in $L^p$-
spaces. Due to the compatibility of the  space $H^1$ with the Littlewood-Paley theory, see Fefferman and Stein \cite{FeffermanStein1972}, and with the complex interpolation of operators, a main step in the proof of the $L^p$-boundedness properties of Fourier integral operators (and for other operators arising in harmonic analysis), is the proof of a suitable criterion for its $H^1$-$L^1$-boundedness which is usually  interpolated with its corresponding $L^2$-theory. However, as in the case of pseudo-differential operators, once proved the $H^1$-$L^1$-continuity criterion one expects that the corresponding class of  $H^1$-$L^1$-bounded Fourier integral operators must satisfy the $L^{1}$-$L^{1,\infty}$ boundedness property. This last property is called the {\it weak (1,1) inequality/boundedness} for these operators.    

The purpose of this paper is to prove the weak (1,1) boundedness for the class of Fourier integral operators associated to symbols of order $m=-(n-1)/2$ and of complex phases (our main result is Theorem \ref{Main:theorem} below).  This order is expected from the weak (1,1) inequality for Fourier integral operators with real-valued phase functions proved by Tao in \cite{Tao}, where it was applied a `second decomposition' on the symbol of the operator adapted to a suitable family of ellipsoids with a smooth variation of eccentricity and orientation that avoid Kakeya-type covering lemmas, see Theorem \ref{Tao:theorem}. This approach was combined with a factorisation strategy. The order $m=-(n-1)/2$ also provides bounded Fourier integral operators with real-valued phase functions from the Hardy space $H^{1}$ into $L^1$, see Segger, Sogge, and Stein \cite{SSS}, as well as in the case of complex phase functions as proved by the second author in \cite{Ruzhansky:CWI-book}, see Theorems \ref{SSS:th} and \ref{Ruzhansky}, respectively.

The extension of the theory of Fourier integral operators due to H\"ormander and Duistermaat to the case of complex-valued phase functions was systematically developed by Melin and Sj\"ostrand in \cite{Melin:Sjostrand1975} motivated by the problem of the construction of parametrices, or fundamental solutions for operators of principal type with non-real principal symbols, see for instance \cite{Melin:Sjostrand1976}.

The algebraic properties for the theory of Fourier integral operators with complex phases are well developed but their regularity has
not been much studied, with the exception, e.g. of \cite{Ruzhansky:CWI-book}. In some sense, the use of complex phases provides a more natural
approach to Fourier integral operators. Indeed,  when working with the complex phase approach, the geometric obstructions of the global theory with real phase functions can be avoided and it is a remarkable fact that every Fourier integral operator with a real phase can be globally parametrized
by a single complex phase, see e.g.  \cite{Ruzhansky:CWI-book}. We also refer to Laptev, Safarov, and Vassiliev \cite{Laptev:Safarov:Vassiliev} for this construction.

\subsection{Fourier integral operators with real-phase functions}
In order to present our main result, we record the definition of Fourier integral operators based on the real-phase function approach. So, we present the following simplified version which is suitable for our purposes. 
\begin{definition} A Fourier integral operator $T$  of order $m\in \mathbb{R}$ and with real phase function $\Phi,$  is a continuous linear operator $T:C^\infty_0(\mathbb{R}^n)\rightarrow C^\infty(\mathbb{R}^n) $  of  the form
\begin{equation}\label{FIO:Quantisation}
    Tf(x)=\smallint\limits_{\mathbb{R}^n}e^{i\Phi(x,\xi)}a(x,\xi)\widehat{f}(\xi)d\xi,
\end{equation} where $\widehat{f}$ is the Fourier transform of $f.$ The symbol $a:=a(x,\xi)$ is of order $m\in \mathbb{R}^n,$ that is $a$ satisfies the symbol inequalities
\begin{equation}
    \forall\alpha\in \mathbb{N}_0\,\forall\beta \in \mathbb{N}_0,\,\,|\partial_{x}^\beta\partial_\xi^\alpha a(x,\xi)|\leq C_{\alpha,\beta}(1+|\xi|)^{m-|\alpha|}.
\end{equation} The symbol $a$ has compact support in $x.$ We assume that in an open neighbourhood $V$ of $\pi_1(\textnormal{supp}(a))=\{x:(x,\xi)\in \textnormal{supp}(a)\},$ (with $\overline{V}$ being a compact set) the real-valued phase function $\Phi$ is homogeneous of degree $1$ in $\xi\neq 0,$ smooth in $(x,\xi)\in {V}\times (\mathbb{R}^n\setminus \{0\}),$  and satisfies the non-degeneracy condition
\begin{equation}
  \forall (x,\xi)\in \overline{V}\times (\mathbb{R}^n\setminus \{0\}),\,  \textnormal{det}\left(\frac{\partial^{2}\Phi(x,\xi)}{\partial {x_i}\partial {\xi_j}}\right)_{1\leq i,j\leq n }\neq 0.
\end{equation}   
\end{definition} The following result provides the weak(1,1) inequality for a Fourier integral operator of order $m=-(n-1)/2,$ or equivalently its boundedness from $L^{1}$ into (the weak $L^1$-space) $L^{1,\infty}.$
  
\begin{theorem}[Tao \cite{Tao}, 2004]\label{Tao:theorem} Let $T$ be a Fourier integral operator of order $m=-(n-1)/2$ with a real phase function. Then $T$ is of weak $(1,1)$ type.    
\end{theorem}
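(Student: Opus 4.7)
The plan is to adapt a Calder\'on--Zygmund argument to the FIO setting using a two-level dyadic decomposition of the symbol. First, I decompose $T=\sum_{k\ge 0}T_k$ with the symbol of $T_k$ localised to the dyadic shell $|\xi|\sim 2^k$; the classical non-degenerate $L^2$ theory yields $\|T_k\|_{L^2\to L^2}\lesssim 2^{-k(n-1)/2}$, so $T$ is bounded on $L^2$. Within each shell I impose the Seeger--Sogge--Stein second dyadic decomposition, partitioning the unit sphere into spherical caps of radius $\sim 2^{-k/2}$ centred at a $2^{-k/2}$-separated net $\{\xi_k^\nu\}$ and writing $T_k=\sum_\nu T_k^\nu$. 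An integration-by-parts analysis shows that, up to rapidly decaying tails, the kernel of $T_k^\nu$ is concentrated on a plate $R_k^\nu(y)$ of normal length $\sim 1$ along the characteristic direction $\nabla_\xi\Phi(\cdot,\xi_k^\nu)$ and of tangential widths $\sim 2^{-k/2}$.

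Given $f\in L^1(\mathbb{R}^n)$ with $\|f\|_{L^1}=1$ and $\lambda>0$, I perform the Calder\'on--Zygmund decomposition $f=g+\sum_Q b_Q$ at height $\lambda$ with the standard properties ($\|g\|_\infty\lesssim\lambda$, $\sum_Q|Q|\lesssim 1/\lambda$, $\int b_Q=0$, $\|b_Q\|_{L^1}\lesssim\lambda|Q|$). For the good part, Chebyshev together with the $L^2$ boundedness of $T$ yields $|\{|Tg|>\lambda\}|\lesssim\lambda^{-1}$ immediately. The matter reduces to controlling the exceptional set for $\sum_Q Tb_Q$ outside the standard enlargement $\bigcup_Q Q^{*}$, whose total measure is already $\lesssim 1/\lambda$.

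For a cube $Q$ of side $\ell(Q)=2^{-j}$ I split the action of $T$ scale-by-scale. In the low-frequency regime $2^k\ell(Q)\le 1$, Taylor expansion of $\Phi$ around the centre of $Q$ together with the cancellation $\int b_Q=0$ produces pointwise kernel bounds of Calder\'on--Zygmund type outside $Q^{*}$; summation in $k$ and $Q$ closes this contribution. In the high-frequency regime $2^k\ell(Q)>1$, cancellation is ineffective, but the amplitude-support analysis from the second decomposition shows that $T_k^\nu b_Q$ is essentially concentrated on the plate $R_k^\nu(Q)$ emanating from $Q$ in the codirection $\xi_k^\nu$. The exceptional set is therefore contained in a union of such plates, and the target bound on its measure amounts to controlling the total area of these plates counted with multiplicity.

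The essential obstacle is that the naive union bound on plates fails: plates $R_k^\nu(Q)$ pointing in different directions $\xi_k^\nu$ can exhibit Kakeya-type overlaps that destroy the $\sum_Q|Q|\lesssim 1/\lambda$ budget. Following Tao's strategy, I would sidestep any Kakeya covering lemma by carrying out a smooth variant of the second dyadic decomposition, indexed by a smoothly varying family of ellipsoids whose orientation and eccentricity change continuously with the sphere variable; this replaces the sharp caps by a partition of unity amenable to $TT^{*}$ orthogonality. A factorisation of $T_k$ into a curvature-adapted operator (carrying the plate geometry) and an auxiliary operator whose $L^1\to L^1$ norm is uniformly bounded in $k$ then allows one to recover the missing $2^{-k(n-1)/2}$ factor in an averaged sense through orthogonality rather than combinatorial overlap counting. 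Summing with the good and low-frequency contributions yields $|\{|Tf|>\lambda\}|\lesssim\|f\|_{L^1}/\lambda$, completing the proof. The delicate bookkeeping linking frequency scale $2^k$, cube scale $\ell(Q)$ and plate orientation $\xi_k^\nu$, and in particular the smooth eccentricity control that replaces the Kakeya step, is the hardest and most novel ingredient.
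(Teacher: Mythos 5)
The statement you are proving is cited, not proved, in the paper; the paper's own technology for it is the adaptation presented for the complex-phase Theorem~\ref{Main:theorem:2}, which follows Tao's factorisation scheme closely. Measured against that scheme, your proposal identifies the right raw ingredients (smooth ellipsoid-based ``second'' decomposition, a factorisation, avoidance of Kakeya covering), but assembles them into the wrong architecture, and the difference is not cosmetic.

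Your plan runs a Calder\'on--Zygmund decomposition $f=g+\sum_Q b_Q$ up front and then tries to control $\sum_Q Tb_Q$ scale-by-scale via plate geometry. That is the Seeger--Sogge--Stein template, and it is precisely the route that stalls on the Kakeya-type overlap of plates when one pushes past $H^1\to L^1$ to weak $(1,1)$. Tao's resolution is not to rescue that template with $TT^*$ orthogonality inside the bad-part estimate; it is to abandon the direct CZ treatment of $T$ and instead split the \emph{operator}, $T=T_{\textnormal{deg}}+T_{\textnormal{nondeg}}$, according to whether the curvature $J(x,\omega)=\det\nabla_\omega^2\Phi_\tau(x,\omega)$ is $\lesssim 2^{-\varepsilon k}$ or not on each Littlewood--Paley shell. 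The smooth, continuously-varying family of ellipsoids that you mention is deployed on the \emph{degenerate} piece $T_{\textnormal{deg}}$ only, and it is used to prove a pointwise-in-$y$ kernel estimate $\|K_{\textnormal{deg},k}(\cdot,y)\|_{L^1_x}\lesssim 2^{-\varepsilon k}$ --- a direct stationary/non-stationary phase bound, not an orthogonality or $TT^*$ argument --- which sums to give $L^1\to L^1$ boundedness of $T_{\textnormal{deg}}$. The factorisation $T_{\textnormal{nondeg}}=SA+E$ is then a \emph{separate} mechanism for the non-degenerate piece: $S$ is a zeroth-order pseudo-differential operator (hence weak $(1,1)$ by classical Calder\'on--Zygmund theory applied to $S$, which is where CZ actually enters), $A$ is an averaging operator bounded on $L^1$ (via Littlewood--Paley and summation by parts), and the error $E=T_{\textnormal{nondeg}}-SA$ is bounded on $L^1$ because its effective order drops by $1/2$. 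Weak $(1,1)$ of $T$ then follows by triangle inequality from the boundedness of the three pieces.

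Concretely, the gap is this: your proposal never introduces the degenerate/non-degenerate split of $T$ based on the Hessian determinant $J(x,\omega)$, and it assigns both the ellipsoid decomposition and the factorisation to the role of patching up the bad-part estimate in a CZ decomposition of $f$. In Tao's proof these two tools address disjoint pieces of the operator and are never used to make a plate-union bound work; the plate-union problem is circumvented, not solved. Without the $T=T_{\textnormal{deg}}+T_{\textnormal{nondeg}}$ split, the factorisation $T_{\textnormal{nondeg}}=SA+E$ has nothing to act on, and the ellipsoid kernel estimate has no $2^{-\varepsilon k}$ gain to exploit (that gain comes precisely from restricting to the set where $|J|\lesssim 2^{-\varepsilon k}$). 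As written, the proposal therefore does not close.
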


The corresponding $L^p$-theory for these operators as developed in \cite{SSS}, can be summarised as follows.
\begin{theorem}[Seeger, Sogge and Stein \cite{SSS}, 1991]\label{SSS:th} Let $1<p<\infty.$ Let $T$ be a Fourier integral operator of order $m=-(n-1)|\frac{1}{p}-\frac{1}{2}|,$ with a real phase function. Then $T$ is bounded from $L^p$ to $L^p.$ When $p=1,$ then $T$ is bounded from $H^1$ to $L^1.$
\end{theorem}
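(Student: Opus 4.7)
The plan is to prove the endpoint $H^{1}\to L^{1}$ statement first and then to obtain the full $L^{p}\to L^{p}$ range by a standard complex-interpolation and duality argument. For the $L^{2}\to L^{2}$ baseline at order $m=0$, I would quote the classical Eskin--H\"ormander result, which follows from a $T T^{\ast}$ calculation: under the non-degeneracy hypothesis on $\Phi$, the composition $T T^{\ast}$ is a pseudodifferential operator of order $0$, so the $L^{2}$-boundedness reduces to the Calder\'on--Vaillancourt theorem. Once one has $H^{1}\to L^{1}$ at order $-(n-1)/2$ and $L^{2}\to L^{2}$ at order $0$, embedding $T$ into an analytic family $T_{z}$ of FIOs whose order is $-(n-1)\mathrm{Re}(z)/2$ and interpolating via Fefferman--Stein yields $L^{p}\to L^{p}$ for $1<p\le 2$ at order $-(n-1)(1/p-1/2)$; duality applied to the formal adjoint (again an FIO associated with the inverse canonical relation) covers $2\le p<\infty$.

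For the $H^{1}\to L^{1}$ step I would use the atomic decomposition of $H^{1}(\mathbb{R}^{n})$: it suffices to verify $\|Ta\|_{L^{1}}\lesssim 1$ uniformly over atoms $a$ supported in balls $B=B(x_{0},r)$ with $\|a\|_{\infty}\le r^{-n}$ and $\int a=0$. After a Littlewood--Paley decomposition in $\xi$, one localises to pieces where $|\xi|\sim 2^{k}$; the low-frequency contribution is harmless, so the issue is the sum over $k$ with $2^{k}\gtrsim r^{-1}$. The key geometric ingredient is the \emph{second dyadic decomposition} of Stein: on the shell $|\xi|\sim 2^{k}$ one partitions the unit sphere into caps $\Theta_{\nu}^{k}$ of angular aperture $\sim 2^{-k/2}$, writes $T=\sum_{\nu}T_{k,\nu}$, and Taylor-expands $\Phi$ in $\xi$ around the central direction $\xi_{\nu}$. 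The resulting kernel $K_{k,\nu}(x,y)$ of $T_{k,\nu}$ concentrates, by repeated integration by parts in $\xi$, on a curved plate $R_{k,\nu}$ of physical dimensions $2^{-k}$ in the direction conormal to the canonical relation and $2^{-k/2}$ in the remaining $n-1$ directions, modulo negligible tails.

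Given this geometric picture, the $L^{1}$-estimate for $Ta$ splits into two regions. Away from the union of plates $R_{k,\nu}$ meeting $B$, integration by parts in $\xi$ using the non-stationarity of $\Phi$ relative to $y$ gives rapid decay and a summable $L^{1}$-contribution. On the remaining region, one uses the cancellation $\int a=0$ and the Lipschitz behaviour of the phase across $B$ to gain an extra factor $r\,2^{k/2}$ when $r\,2^{k/2}\le 1$; for the large-$k$ range $r\,2^{k/2}>1$ one instead estimates by Cauchy--Schwarz, bounding $\|T_{k,\nu}a\|_{L^{1}(R_{k,\nu})}\le |R_{k,\nu}|^{1/2}\|T_{k,\nu}a\|_{L^{2}}$, where the $L^{2}$-bound on $T_{k,\nu}$ acquires a factor $2^{-k(n-1)/2}$ from the order correction, and the number of relevant plates is controlled by the geometry of the canonical relation. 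Summing over $\nu$ and $k$ produces a convergent geometric series.

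The main obstacle is the curved-plate bookkeeping: one must track uniformly in the atom the number of plates $R_{k,\nu}$ that interact non-trivially with $B$, verify that the $L^{2}$-estimates of $T_{k,\nu}$ on each plate carry the expected gain $2^{-\varepsilon k}$ after the cancellation is exploited, and ensure that the finite-overlap of the plates (at fixed $k$) is preserved under propagation by the canonical relation. This is essentially the content of the Seeger--Sogge--Stein argument and rests on the non-degeneracy of $\Phi$; once it is in place, the remaining steps (interpolation, duality) are formal.
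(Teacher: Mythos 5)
This theorem is stated in the paper only as a cited background result from Seeger, Sogge and Stein \cite{SSS}; the paper gives no proof, and the surrounding remark merely attributes the $p=2$ case to the $T^{*}T$ method of Duistermaat--H\"ormander \cite{Duistermaat-Hormander:FIOs-2} and of \`Eskin. Your sketch is therefore compared not against an argument in this paper but against the original SSS proof, and it is a faithful outline of it: atomic decomposition of $H^{1}$, Littlewood--Paley decomposition in $|\xi|\sim 2^{k}$ combined with the ``second dyadic decomposition'' of the sphere into caps of angular width $\sim 2^{-k/2}$, the resulting curved plates of physical dimensions $2^{-k}\times 2^{-k/2}\times\cdots\times 2^{-k/2}$, the split of the $L^{1}$-estimate into a far region (non-stationary phase) and a near region (cancellation of the atom when $r\,2^{k/2}\le 1$, Cauchy--Schwarz against the $L^{2}$ bound weighted by $2^{-k(n-1)/2}$ when $r\,2^{k/2}>1$), followed by Fefferman--Stein interpolation against the $L^{2}$ baseline and duality. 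Two small remarks: for the order-zero $L^{2}$ estimate in the classical real-phase case one does not need the full strength of Calder\'on--Vaillancourt, since $TT^{*}$ lands in $S^{0}_{1,0}$ (Calder\'on--Vaillancourt becomes essential only in the $S^{0}_{1/2,1/2}$ setting relevant to the complex-phase part of this paper); and the ``bookkeeping'' you flag at the end --- counting plates meeting $B$, uniformity over atoms, finite overlap under the canonical transformation --- is indeed where the bulk of the SSS work sits, so your outline correctly identifies the substantive obstacles even though it does not carry them out.
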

The case $p=2$ in Theorem \ref{SSS:th} was proved by Duistermaat and H\"ormander  \cite{Duistermaat-Hormander:FIOs-2}\footnote{See also Èskin \cite{Eskin1970}.} using the $T^*T$-method and extended independently to the setting of Fourier integral operators with complex phases by H\"ormander in \cite{Ho2} and by Melin and Sj\"ostrand in \cite{Melin:Sjostrand1976}. One of the fundamental differences when working on the boundedness of operators in the setting of complex-valued phase functions, compared with the real-valued setting is that one has to deal with the boundedness properties of pseudo-differential operators in the class $S^{0}_{1/2,1/2}.$ One reason for this is the analysis of the oscillatory term $e^{\pm (1+i\tau)\textnormal{Im}(\Phi)},$ see Melin and Sj\"ostrand \cite[Page 394]{Melin:Sjostrand1976}.

\subsection{Fourier integral operators with complex-phase functions} Let $X$ and $Y$ be smooth manifolds of dimension $n$ and let us denote by $\widetilde{ T^*(X\times Y)\setminus 0}$  the almost analytic continuation of  $ T^*(X\times Y)\setminus 0.$ For this notion see H\"ormander \cite{Hormander1969} and Nirenberg \cite{Nirenberg71}, see also Subsection \ref{2:2}. Let   $$  C\equiv C_{\Phi}=\{(x,d_x\Phi,y, d_y\Phi):d_\theta \Phi=0\}\subset\widetilde{ T^*(X\times Y)\setminus 0}$$  be a smooth positive homogeneous canonical relation parametrised by a phase function of {\it positive type}, namely, it satisfies $\textnormal{Im}(\Phi)\geq 0,$ see Subsections \ref{2:2} and \ref{2:4} for details. 

The class  $I^\mu_\rho(X,Y;C)$ of Fourier integral operators is determined modulo $C^{\infty}$ by those integral operators $T$ that locally have an integral  kernel of the form
\begin{align}\label{Kernel:FIO:C}
    A(x,y)=\smallint\limits_{\mathbb{R}^N}e^{i\Phi(x,y,\theta)}a(x,y,\theta)d\theta,
\end{align}where $a(x,y,\theta)$ is a symbol of order $\mu+\frac{N-n}{2}.$ 
By the equivalence-of-phase-function theorem, we can always microlocally assume that $N=n,$ and that the symbol $a$ satisfies the Kohn-Nirenberg type estimates
\begin{equation}
    |\partial_{x,y}^{\beta}\partial_\theta^\alpha a(x,y,\theta)|\leq C_{\alpha,\beta}(1+|\theta|)^{\mu-|\alpha|},
\end{equation}locally uniformly in $(x,y).$  According to Lemma 2.1 of \cite{Melin:Sjostrand1976}, one can always re-write the kernel in \eqref{Kernel:FIO:C} as 
\begin{align}\label{Kernel:FIO:C:2}
    A(x,y)=\smallint\limits_{\mathbb{R}^N}e^{2\pi i(\Phi(x,\theta)-y\cdot\theta )}a(x,y,\theta)d\theta,
\end{align}where the function $\Phi$ is of positive type. Note that the amplitude $a$ in \eqref{Kernel:FIO:C:2} is probably different from the one in \eqref{Kernel:FIO:C}, however, we keep the same notation for simplicity. To be able to use the Fourier inversion formula we will assume that $a(x,y,\theta)$ is independent of $y\in \mathbb{R}^n,$ and compactly supported in $x.$ So, we will work with symbols instead of amplitudes, in order to recover the simplified form
\begin{equation}
    Tf(x)=\smallint\limits_{\mathbb{R}^n}e^{i\Phi(x,\xi)}a(x,\xi)\widehat{f}(\xi)d\xi.
\end{equation} 
To analyse the boundedness of a Fourier integral operator associated to the phase $\Phi$ we require the following {\it local graph condition}.

\begin{assumption}\label{assumption:local:graph}
 There exists $\tau\in \mathbb{C},$ such that 
    \begin{equation}\label{ass}
     \Phi_\tau:=   \textnormal{Re}(\Phi)+\tau\textnormal{Im}(\Phi) \textnormal{ defines }\textnormal{ a } \textnormal{ local } \textnormal{ graph } \textnormal{ in }T^{*}(\widetilde{X\times Y})\setminus 0.
    \end{equation}   Locally, this means that 
\begin{equation}\label{non:deg:con:Ruz}
        \textnormal{det}\partial_x \partial_\xi(\textnormal{Re}(\Phi(x,\xi))+\tau\textnormal{Im}(\Phi(x,\xi)))\neq 0,
\end{equation}
on the support of the symbol $a,$  for all $\xi\neq 0.$
\end{assumption}
\begin{remark}
    According to Lemma 1.8.1 of \cite{Ruzhansky:CWI-book}, the local graph condition \eqref{ass} in Assumption \ref{assumption:local:graph} is equivalent to the same assumption for almost every $\tau\in \mathbb{C}.$ Moreover, the determinant in  \eqref{non:deg:con:Ruz} is a polynomial function in $\tau$ which means that if   \eqref{ass} holds for some $\tau\in \mathbb{C},$ then it holds for all but finitely many $\tau \in \mathbb{C}.$ To summarise this discussion, Assumption \ref{assumption:local:graph} is equivalent to the existence of the same condition for some, and then for almost every $\tau\in \mathbb{C}.$
\end{remark} 
Before presenting our main result we record the $L^p$-theory of Fourier integral operators with complex phases as developed by the second author in  \cite[Theorems 1.7.1 and 1.7.2]{Ruzhansky:CWI-book} including the endpoint case $p=1.$ 
\begin{theorem}\label{Ruzhansky}  Let $T$ be a Fourier integral operator of order $\mu=-(n-1)|\frac{1}{p}-\frac{1}{2}|$ associated with a canonical relation parametrised by a complex phase $\Phi$  of positive type and satisfying the local graph condition \eqref{ass}. If $p=1$ then $T$ is bounded from $H^1_{comp}$ to $L^1_{loc},$ and for $1<p<\infty,$  $T$ is bounded from $L^p_{comp}$ to $L^p_{loc}.$    
\end{theorem}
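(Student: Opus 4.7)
The plan is to combine complex interpolation with endpoint analyses at $p=1$ and $p=2$, where the endpoint arguments both rely on a factorisation of the complex exponential $e^{i\Phi}$ in terms of a real phase. By Assumption \ref{assumption:local:graph} one selects a real parameter $\tau$ for which $\Phi_\tau(x,\xi)=\textnormal{Re}(\Phi(x,\xi))+\tau\,\textnormal{Im}(\Phi(x,\xi))$ is a real phase satisfying the non-degeneracy condition \eqref{non:deg:con:Ruz}; the polynomial nature of the determinant in \eqref{non:deg:con:Ruz} ensures that such $\tau$ exists for all but finitely many values. Writing
$$ e^{i\Phi(x,\xi)}=e^{i\Phi_\tau(x,\xi)}\cdot e^{-(1+i\tau)\textnormal{Im}(\Phi)(x,\xi)}, $$
the operator $T$ is converted into a real-phase Fourier integral operator $\widetilde{T}$ with phase $\Phi_\tau$ and modified symbol $\widetilde{a}(x,\xi)=e^{-(1+i\tau)\textnormal{Im}(\Phi)(x,\xi)}a(x,\xi)$. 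The factor $|e^{-(1+i\tau)\textnormal{Im}(\Phi)}|=e^{-\textnormal{Im}(\Phi)}\leq 1$ is a harmless damping, while $e^{-i\tau\textnormal{Im}(\Phi)}$ oscillates at the critical scale $|\xi|$. Using the classical Melin-type inequality $|\partial_x\textnormal{Im}(\Phi)|^{2}\lesssim|\xi|\,\textnormal{Im}(\Phi)$, valid because $\textnormal{Im}(\Phi)\geq 0$ is smooth and homogeneous of degree one in $\xi$, one checks that $\widetilde{a}$ belongs to the H\"ormander class $S^{\mu}_{1/2,1/2}$.

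The $p=2$ bound then follows by the $T^{*}T$ method of H\"ormander \cite{Ho2} and Melin and Sj\"ostrand \cite{Melin:Sjostrand1976}: the composition $\widetilde{T}^{*}\widetilde{T}$ is a pseudo-differential operator of order $2\mu=0$ in $S^{0}_{1/2,1/2}$, which is bounded on $L^2$ by the Calder\'on-Vaillancourt theorem. For the $p=1$ endpoint, the plan is to adapt the argument of Seeger, Sogge and Stein (Theorem \ref{SSS:th}) to $\widetilde{T}$. Decompose $f\in H^{1}_{comp}$ atomically, $f=\sum_{k}\lambda_{k}a_{k}$, with each atom $a_{k}$ being $L^2$-normalised, of mean zero, and supported in a cube $Q_{k}$. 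Apply a Littlewood-Paley decomposition $\widetilde{T}=\sum_{j\geq 0}\widetilde{T}_{j}$ at frequency scales $|\xi|\sim 2^{j}$, and for each $j$ a second dyadic decomposition $\widetilde{T}_{j}=\sum_{\nu}\widetilde{T}_{j,\nu}$ of the cone into plates of eccentricity $2^{j}\times 2^{j/2}\times\cdots\times 2^{j/2}$ whose axes are aligned with the canonical relation of $\Phi_{\tau}$. For each atom, one combines an $L^2$-estimate of $\widetilde{T}_{j,\nu}a_{k}$ on its tube image with the cancellation coming from the atom's mean-zero property, summing over $j$ and $\nu$ to obtain $\|\widetilde{T}a_{k}\|_{L^1}\lesssim 1$, and hence $\|\widetilde{T}f\|_{L^1}\lesssim\|f\|_{H^1}$.

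The intermediate range $1<p<2$ then follows from Stein's complex interpolation theorem applied to a holomorphic family $\widetilde{T}_{z}$ of Fourier integral operators whose orders vary linearly in $z$, interpolating between the $H^1\to L^1$ bound at $z=0$ and the $L^2\to L^2$ bound at $z=1$; this yields the claimed $L^p$-boundedness at order $\mu=-(n-1)(1/p-1/2)$. The range $p>2$ is then handled by duality, using that the formal adjoint of a complex-phase FIO of positive type is, microlocally, an FIO of the same order whose adjoint phase retains a positive-type imaginary part after the usual change of variables. The principal obstacle is the endpoint $p=1$: since $\widetilde{a}\in S^{\mu}_{1/2,1/2}$ rather than $S^{\mu}_{1,0}$, the kernel estimates underlying the Seeger-Sogge-Stein argument must be re-derived with careful attention to the $(1/2,1/2)$-loss produced by the oscillatory correction $e^{-i\tau\textnormal{Im}(\Phi)}$. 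The geometric compensation comes from the plate width $2^{-j/2}$ introduced by the second dyadic decomposition, which is exactly the scale at which this loss is absorbed, in the spirit of the pseudo-differential calculus developed by Melin and Sj\"ostrand \cite[p.~394]{Melin:Sjostrand1976} and in \cite{Ruzhansky:CWI-book}.
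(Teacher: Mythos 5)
Theorem \ref{Ruzhansky} is not proved in this paper: it is imported from \cite[Theorems 1.7.1 and 1.7.2]{Ruzhansky:CWI-book} as background, so there is no internal proof against which to measure your sketch. Your framework---factorisation $e^{i\Phi}=e^{i\Phi_\tau}e^{-(1+i\tau)\textnormal{Im}(\Phi)}$, $L^2$ via H\"ormander and Melin--Sj\"ostrand, a Seeger--Sogge--Stein atomic decomposition at $p=1$, complex interpolation, duality---matches what the paper attributes to that reference, and your identification of the obstacle (the modified symbol $\widetilde{a}$ lies only in $S^\mu_{1/2,1/2}$, not $S^\mu_{1,0}$) is correct. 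The gap is in your claim that the second dyadic plate width $2^{-j/2}$ is ``exactly the scale at which this loss is absorbed.'' The paper's own remark following Theorem \ref{Main:theorem} says explicitly that treating $\widetilde{T}$ as a real-phase FIO with a \emph{generic} $S^\mu_{1/2,1/2}$ symbol yields only the weaker order $\mu\leq-(n-\tfrac12)|\tfrac1p-\tfrac12|$, not the sharp $-(n-1)|\tfrac1p-\tfrac12|$. The plate width compensates the $(1/2,1/2)$ loss in the $(n-1)$ angular variables---$|\partial_\omega\widetilde{a}|\lesssim\lambda^{\mu+1/2}$ pairs against the angular extent $2^{-j/2}$, cf.\ Lemma \ref{Lemma:Order:m} with $\rho=1/2$---but the SSS kernel estimate also takes the \emph{radial} derivative to produce longitudinal decay, and for a generic $(1/2,1/2)$ symbol $|\partial_\lambda\widetilde{a}|\lesssim\lambda^{\mu-1/2}$ rather than $\lambda^{\mu-1}$, costing a factor $2^{j/2}$ that the geometry does not absorb. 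Followed literally, your sketch proves a genuinely weaker theorem.

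What restores the sharp order is that $e^{-(1+i\tau)\textnormal{Im}(\Phi)}$ is \emph{not} a generic $S^0_{1/2,1/2}$ symbol. Since $\textnormal{Im}(\Phi)$ is homogeneous of degree one in $\xi$, Euler's identity gives $\partial_\lambda\textnormal{Im}(\Phi)=\textnormal{Im}(\Phi)/\lambda$, hence $|\partial_\lambda^m e^{-(1+i\tau)\textnormal{Im}(\Phi)}|\lesssim\lambda^{-m}(\textnormal{Im}\Phi)^m e^{-\textnormal{Im}\Phi}\lesssim\lambda^{-m}$, using $\sup_{s\geq 0}s^m e^{-s}<\infty$; the radial direction is of type $(1,0)$. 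Only the angular derivatives lose $\lambda^{1/2}$, controlled by the Glaeser inequality in $\xi$, $|\partial_\xi\textnormal{Im}(\Phi)|^2\lesssim|\xi|^{-1}\textnormal{Im}(\Phi)$ (the version you quote, with $\partial_x$ and $|\xi|$ on the right, is the $\delta$-half of the estimate; it is the $\xi$-version that governs the $\rho$-decay the SSS argument uses). Recording this asymmetry is what makes the second dyadic decomposition deliver the stated order. A separate caveat for your $p=2$ step: $S^0_{1/2,1/2}$ is not closed under composition, so $\widetilde{T}^*\widetilde{T}\in S^0_{1/2,1/2}$ does not follow formally; the $L^2$ bound in \cite{Ho2,Melin:Sjostrand1976} again uses the sign condition $\textnormal{Im}(\Phi)\geq 0$ directly rather than the bare H\"ormander-class membership of $\widetilde{a}$.
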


\subsection{Main result}

The following is the main result of this paper. We will assume that the complex phase function is of {\it positive type}, in such a way that $\textnormal{Im}(\Phi(x,\xi))> 0,$ when $|\xi|\neq 0.$
\begin{theorem}\label{Main:theorem} Let $T$ be a Fourier integral operator of order $\mu=-(n-1)/2$ associated with a canonical relation parametrised by a complex phase $\Phi$  of positive type in such a way that $\textnormal{Im}[\Phi(x,\xi)]>0$ when $|\xi|\neq 0,$ and satisfying the local graph condition \eqref{ass}. Then $T$ is of weak $(1,1)$ type.
\end{theorem}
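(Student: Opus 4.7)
The plan is to combine the Calder\'on-Zygmund decomposition with the phase-splitting identity
\begin{equation*}
e^{i\Phi(x,\xi)} = e^{i\Phi_{\tau}(x,\xi)}\, e^{-(1+i\tau)\,\mathrm{Im}\,\Phi(x,\xi)},
\end{equation*}
where $\Phi_\tau := \mathrm{Re}\,\Phi + \tau\,\mathrm{Im}\,\Phi$ and, by Assumption \ref{assumption:local:graph} together with the remark following it, we fix a real $\tau\in\mathbb{R}$ for which $\Phi_\tau$ is a real-valued phase satisfying the non-degeneracy condition \eqref{non:deg:con:Ruz}. Absorbing the non-oscillatory damping factor into the symbol as $\tilde a(x,\xi) := a(x,\xi)\, e^{-(1+i\tau)\,\mathrm{Im}\,\Phi(x,\xi)}$ turns $T$ into a real-phase Fourier integral operator of order $-(n-1)/2$, now with a symbol lying in a class of $S^{0}_{1/2,1/2}$-type, as in the complex-phase analysis of Melin-Sj\"ostrand \cite{Melin:Sjostrand1976} and in the $H^1$-$L^1$ argument of \cite{Ruzhansky:CWI-book}.

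Given $f\in L^1$ and $\lambda>0$, I would apply the Calder\'on-Zygmund decomposition at level $\lambda$ to split $f = g + \sum_Q b_Q$ in the standard way. The good part is handled by the $L^2$-boundedness of $T$, which follows from $\mu=-(n-1)/2\leq 0$ and the complex-phase $T^*T$-method of \cite{Ho2,Melin:Sjostrand1976}, giving $|\{|Tg|>\lambda/2\}|\lesssim \|f\|_{L^1}/\lambda$. Let $Q$ have side $2^{-k}$ and centre $x_Q$; the task reduces to showing
\begin{equation*}
\sum_{Q}\,\smallint_{x\notin Q^{*}}|Tb_{Q}(x)|\,dx\lesssim \|f\|_{L^1}
\end{equation*}
for a suitable enlargement $Q^{*}$ of $Q$. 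A Littlewood-Paley decomposition $\tilde a = \sum_{j\geq 0}\tilde a_{j}$ with $\mathrm{supp}\,\tilde a_j \subset \{|\xi|\sim 2^j\}$ concentrates the analysis on high frequencies $2^j\gtrsim 2^k$, since the low-frequency contributions can be absorbed by the smoothness of the kernel on scale $2^{-k}$ together with the zero mean of $b_Q$.

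For each high-frequency piece I would adapt Tao's second dyadic decomposition from \cite{Tao}, partitioning the shell $\{|\xi|\sim 2^j\}$ into $\sim 2^{(n-1)j/2}$ caps of angular width $2^{-j/2}$ centred at directions $\xi_\nu$, producing operators $T_j^\nu$ whose kernels concentrate along plates of dimensions $2^{-j}\times 2^{-j/2}\times\cdots\times 2^{-j/2}$ issuing from $Q$ in the direction $\partial_\xi\Phi_\tau(x_Q,\xi_\nu)$. The plate-by-plate estimate
\begin{equation*}
\|T_{j}^{\nu} b_{Q}\|_{L^{1}(\mathbb{R}^{n}\setminus R_{Q,j,\nu}^{*})} \lesssim 2^{-\varepsilon(j-k)}\,\|b_{Q}\|_{L^{1}}
\end{equation*}
would follow by integration by parts in $\xi$ against $\Phi_\tau(x,\xi)-y\cdot\xi$ combined with the cancellation of $b_Q$; summing in $\nu$ using the smooth variation in eccentricity and orientation of Tao's ellipsoids (which avoids any Kakeya-type covering) and then in $j\geq k$ and in $Q$ delivers the weak $(1,1)$ inequality.

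The principal obstacle is that $\tilde a$ genuinely behaves like an $S^{0}_{1/2,1/2}$-symbol once all derivatives of the damping factor $e^{-(1+i\tau)\,\mathrm{Im}\,\Phi}$ are accounted for, whereas Tao's original real-phase argument relies on $S^{0}_{1,0}$-type estimates for each second-decomposition piece; any $S^{0}_{1/2,1/2}$-loss would be fatal to the plate geometry. The hypothesis $\mathrm{Im}\,\Phi(x,\xi)>0$ for $\xi\neq 0$ is what rescues the argument: together with the degree-one homogeneity of $\Phi$ in $\xi$ and the compactness of $\mathrm{supp}_x\,a$, it yields the lower bound $\mathrm{Im}\,\Phi(x,\xi)\geq c|\xi|$ on the support of the symbol, so that $|e^{-(1+i\tau)\,\mathrm{Im}\,\Phi}|\leq e^{-c|\xi|}$ provides exponential decay in $|\xi|$ which dominates every polynomial loss produced by differentiating $\mathrm{Im}\,\Phi$. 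Distributing this exponential gain uniformly across the $\sim 2^{(n-1)j/2}$ second-decomposition pieces so as to keep Tao's factorisation and the eccentric-ellipsoid geometry compatible with the damped symbol is the key technical point needed to transfer the real-phase weak $(1,1)$ proof to the complex-phase setting.
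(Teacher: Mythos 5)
Your route diverges substantially from the paper's. The paper reduces Theorem \ref{Main:theorem} to Theorem \ref{Main:theorem:2} (strictly positive phase) by compactness, and then proves the latter by adapting Tao's \emph{factorisation} scheme: split $T=T_{\textnormal{deg}}+T_{\textnormal{nondeg}}$ according to the size of the curvature $J_\tau(x,\omega)=\det\nabla_\omega^2\Phi_\tau$, prove that $T_{\textnormal{deg}}$ is $L^1$-bounded via a \emph{continuous} second dyadic decomposition into ellipsoids of smoothly varying eccentricity (Lemma \ref{Fundamental:deg:lemma}), then factorise $T_{\textnormal{nondeg}}=SA+E$ with $S$ a zero-order pseudo-differential operator (weak $(1,1)$ by Calder\'on--Zygmund theory), $A$ an averaging operator, and $E$ an $L^1$-bounded error. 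The key structural choice for complex phases is to absorb the damping factor $e^{-(1+i\tau)\mathrm{Im}\,\Phi}$ into $A$ rather than into $S$ (otherwise $S$ would land in $S^0_{1/2,1/2}$ and one would pay the \'Alvarez--Hounie penalty down to order $-n/4$); the $L^1$-boundedness of the modified $A$ (Lemma \ref{boundedness:A}) then does use the exponential decay $\mathrm{Im}\,\Phi(x,\theta)\geq\varkappa_0$ for $|\theta|\sim 1$ that you identify.

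By contrast you propose a direct Calder\'on--Zygmund decomposition of the input $f$, followed by Littlewood--Paley and Tao-style angular caps, with the exponential damping as the mechanism rescuing the $S^0_{1/2,1/2}$ loss. There are two genuine gaps here. First, a direct CZ-plus-second-dyadic-decomposition attack is precisely the Seeger--Sogge--Stein strategy, which is naturally $H^1\to L^1$; the whole point of Tao's paper is that this does \emph{not} obviously give weak $(1,1)$, which is why he introduced both the curvature split (to squeeze $2^{-\varepsilon k}$ decay out of the degenerate kernel, rather than the $O(1)$ bound that the discrete caps give) and the $SA$ factorisation (because the non-degenerate part is not controllable by kernel estimates alone). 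Your sketch omits the curvature split entirely and simply asserts the plate-by-plate gain $2^{-\varepsilon(j-k)}$ from mean-zero of $b_Q$; that gain is not available without further structure. Second, you invoke the exponential gain $\mathrm{Im}\,\Phi(x,\xi)\geq c|\xi|$ as a global black box that "dominates every polynomial loss produced by differentiating $\mathrm{Im}\,\Phi$". Taken at face value, this would force $\tilde a(x,\xi)=a(x,\xi)e^{-(1+i\tau)\mathrm{Im}\,\Phi}$ to be rapidly decreasing in $\xi$ on $\textnormal{supp}(a)$, hence $T$ would have a bounded, compactly $x$-supported kernel and the weak $(1,1)$ bound would be immediate with no harmonic analysis at all --- in which case the entire cap-decomposition plan you lay out is redundant, and you should say so explicitly. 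In the paper, by contrast, the exponential decay enters only once, inside Lemma \ref{boundedness:A}, while the degenerate part and the error operator are controlled by the $S^{-(n-1)/2}_{1/2,1/2}$ symbol structure alone. Either pursue the consequence of the global exponential bound to its logical end, or explain why it cannot be used that freely; as written, the proposal neither carries out the CZ estimates nor resolves this tension, and the "key technical point" you name at the close is precisely the part that is not supplied.
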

\begin{remark}
    We observe that the condition on the phase function in Theorem \ref{Main:theorem}, that $\textnormal{Im}[\Phi(x,\xi)]>0$ when $|\xi|\neq 0,$ is not restrictive,  see Laptev, Safarov, and Vassiliev \cite{Laptev:Safarov:Vassiliev}.
\end{remark}
Below we briefly discuss some aspects of the main differences between the regularity properties of Fourier integral operators in the setting of complex-valued phase functions in comparison with the setting of real-valued phases.
\begin{remark} There is a simple argument to show that our main Theorem \ref{Main:theorem} does not follow from its real counterpart  in Theorem \ref{Tao:theorem}. Indeed, any Fourier integral operator $T\in I^\mu(X,Y;C)$ in the form
\begin{equation}
  Tf(x)= \smallint\limits_{\mathbb{R}^n}e^{i(\textnormal{Re}(\Phi(x,\xi))+\tau \textnormal{Im}(\Phi(x,\xi) )}a(x,\xi)e^{-(1+i\tau)\textnormal{Im}(\Phi(x,\xi) )}\widehat{f}(\xi)d\xi ,
\end{equation}with real $\tau\neq0$ and with real phase function $$\Phi_{\tau}(x,\xi):=\textnormal{Re}(\Phi(x,\xi))+\tau \textnormal{Im}(\Phi(x,\xi) )$$ has the symbol
$$ a_\tau(x,\xi)= a(x,\xi)e^{-(1+i\tau)\textnormal{Im}(\Phi(x,\xi) )}$$
in the class $S^{\mu}_{1/2,1/2}$ if $a\in S^\mu,$ see Melin and Sj\"ostrand \cite[Page 394]{Melin:Sjostrand1976}.  Note that $T$ is bounded on $L^p(\mathbb{R}^n),$ with $1<p<\infty$ if, for example,  $$\mu\leq -\left(n-\frac{1}{2}\right)|1/p-1/2|.$$
However when $p\rightarrow 1^{+}$ this order is not good enough as the one in Theorem \ref{Tao:theorem}, or compared with the order $-(n-1)/2$ for the $H^1$-$L^1$-boundedness  proved by Seeger, Sogge and Stein in \cite{SSS} for real phase functions or even as proved in \cite{Ruzhansky:CWI-book} for complex phase functions.
In order to improve this order in Theorem  \ref{Main:theorem} we need to avoid the reduction to a real phase function, making some strategic modifications to the argument given in Tao \cite{Tao}, in particular taking into account that we have to deal with dyadic components of the symbol $a_\tau(x,\xi)= a(x,\xi)e^{-(1+i\tau)\textnormal{Im}(\Phi(x,\xi) )}.$ 
\end{remark}
\begin{remark}
We remark that in Tao \cite{Tao} it has been introduced a {\it factorisation approach} for the proof of the weak (1,1) inequality of Fourier integral operators, see Section \ref{Section:Main:theorem:2} for details. When $T$ has a real-valued phase function, according to such an approach a portion of the operator, the non-degenerate part $T_{\textnormal{non-deg}},$ is factorised as $T_{\textnormal{non-deg}}=SA$ modulo an error operator $E=T_{\textnormal{non-deg}}-SA.$ Here, $S$ is a pseudo-differential operator of order zero and its symbol is essentially $|\xi_n|^{(n-1)/2}a(x,\xi),$ when $|\xi_n|\gg 1,$ where $a$ is the symbol of $T.$ Note that the weak (1,1) boundedness of $S$ is a consequence of the Calder\'on-Zygmund theory.
 However, compared with the case of Fourier integral operators with complex phases, one has to redefine the operator $S,$ because in this setting  $|\xi_n|^{(n-1)/2}a_\tau(x,\xi)\in S^{0}_{1/2,1/2},$ for $|\xi_n|\gg 1.$ Nevertheless, according to the $L^1$-theory for the H\"ormander classes $S^{m}_{1/2,1/2},$ $m\in \mathbb{R},$ see \'Alvarez and Hounie \cite{Hounie}, a pseudo-differential operator with a symbol in these classes is of weak (1,1) type when $m\leq -n/4.$ In order to avoid the loss of regularity caused by the order $-n/4$, we will keep the symbol of the operator $a$ in the pseudo-differential part $S,$ and we will absorb the oscillating term $e_{\Phi}=e^{-(1+i\tau)\textnormal{Im}(\Phi(x,\xi) )}$ into the average operator $A. $ Essentially, this term will modify the argument in \cite{Tao}, where the $L^1$-boundedness of the averaging operator $A$ depends on the uniformly $L^1$-boundedness of the Littlewood-Paley operators, but even, perturbating this part with the oscillating symbol $e_{\Phi}$ we are able to preserve the $L^1$-boundedness of $A.$ In order to follow this strategy, we will make a fundamental reduction of our Theorem \ref{Main:theorem} to the case of phase functions of {\it strictly positive type}  in Section \ref{reduction}, see Theorem \ref{Main:theorem:2}.  We present the proof of Theorem \ref{Main:theorem} in Section \ref{reduction}.  The authors thank Terence Tao for suggesting the factorisation approach used in this manuscript.
\end{remark}
In the next Section \ref{Section:pre}, we present the preliminaries and the notation used in this work. We will continue the discussion about the regularity properties of Fourier integral operator in Section \ref{reduction}, its main result is Theorem \ref{Main:theorem:2} and its proof is presented in Section \ref{Section:Main:theorem:2}. We finish this manuscript with Section \ref{Final:rem} where we present an application of our main theorem and also we provide a bibliographic discussion.

\section{Preliminaries}\label{Section:pre}
Let $M,$ $X$ and $Y$ be paracompact smooth real manifolds. We assume that $X$ and $Y$  are manifolds of dimension $n.$  Here, we can take $M=X,Y$ or $M=X\times Y.$ When $Y\subset \mathbb{R}^n,$  $L^{1}(Y)$ is defined in the usual form, namely,  
 $ 
    f\in L^{1}(Y) \Longleftrightarrow  \Vert f\Vert_{L^1(Y)}:=\smallint_{Y}|f(x)|dx<\infty,
$ where $dx$ denotes the Lebesgue measure on $\mathbb{R}^n.$ The measure of a Borel set $A\subset \mathbb{R}^n$ is denoted by $|A|,$ ans its characteristic function is denoted by $1_A$ or by $\chi_A.$  We also consider the weak $L^1$-space $L^{1,\infty}(Y)$ defined by the seminorm
$  \Vert f\Vert_{L^{1,\infty}(Y) }:=\sup_{t>0}t|\{y\in Y:|f(y)|>t\}|. $ 

If $X$ is a compact manifold without boundary we define the spaces $L^{1}(X)$ and $L^{1,\infty}(X)$ by using partitions of unity (subordinated to local coordinate systems). We say that an operator $T:L^{1}(Y)\rightarrow L^{1,\infty}(Y)$ is of {\it weak (1,1) type} if there exists a constant $C>0$ such that for all $f\in L^{1}(Y),$ the inequality $\Vert Tf\Vert_{L^{1,\infty}}(Y)\leq C\Vert f\Vert_{L^{1}(Y)}$ holds. A prototype of operators of weak (1,1) type is the family of Calder\'on-Zygmund singular integral operators, see Calder\'on and Zygmund \cite{CalderonZygmund1952}.

Below we precise the notation and the fundamental notions related to the theory of Fourier integral operators of importance for this work. We start with the properies of {\it conic Lagrangian manifolds} in the next subsection.
\subsection{Basics on symplectic geometry} Now, let us follow  \cite[Chapter I]{Ruzhansky:CWI-book}.
 We recall that a $2$-form $\omega$ is called {\it symplectic}  on $M$ if $d\omega =0,$ and for all $x\in M,$ the bilinear form $\omega_x$  is antisymmetric and non-degenerate on $T_{x}M$. The canonical symplectic form $\sigma_M$ on $M$ is defined as follows. Let $\pi:=\pi_M:T^*M\rightarrow M,$ be the canonical projection. For any $(x,\xi)\in T^*M,$ let us consider the linear mappings
\begin{equation*}
    d\pi_{(x,\xi)}:T_{(x,\xi)}(T^*M)\rightarrow T_{x}M\textnormal{   and,   }\xi:T_{x}M\rightarrow \mathbb{R}.
\end{equation*}The composition $\alpha_{(x,\xi)}:=\xi\circ d\pi_{(x,\xi)}\in T^{*}_{(x,\xi)}(T^*M),$ that is
$
   \xi\circ d\pi_{(x,\xi)}:T_{(x,\xi)}(T^*M)\rightarrow  \mathbb{R},
$ defines  a 1-form $\alpha $ on $T^*M.$ With the notation above, the canonical symplectic form $\sigma_M$ on $M$ is defined by 
\begin{equation}\label{sigmaM}
    \sigma_M:=d\alpha.
\end{equation}Since $\sigma_M$ is an exact form, it follows that $d\sigma_M=0$ and then  that $\sigma_M$ is symplectic. If $M=X\times Y,$ it follows that  $\sigma_{X\times Y}=\sigma_X\oplus -\sigma_Y .$ Now we record the family of submanifolds that are necessary when one defines the canonical relations.
\begin{itemize}
    \item Let $M$ be of dimension $n.$ A submanifold $\Lambda\subset T^*M$ of dimension $n$ is called {\it Lagrangian} if 
\begin{equation*}
    T_{(x,\xi)}\Lambda=( T_{(x,\xi)}\Lambda)^{\sigma}:=\{v\in T_{(x,\xi)}(T^*M):\sigma_M(v,v')=0,\,\forall v'\in T_{(x,\xi)}\Lambda \}.
\end{equation*}
\item We say that  $\Lambda\subset T^*M\setminus 0$ is {\it conic} if $(x,\xi)\in \Lambda,$ implies that $(x,t\xi)\in \Lambda,$ for all $t>0.$ 
\item Let $\Sigma\subset X$ be a smooth submanifold of $X$ of dimension $k.$ Its conormal bundle in $T^*X$ is defined by
\begin{equation}
    N^{*}\Sigma:=\{(x,\xi)\in T^*X: \,x\in \Sigma,\,\,\xi(\delta)=0,\,\forall \delta\in T_{x}\Sigma  \}.
\end{equation}
\end{itemize}
The following facts characterise the conic Lagrangian submanifolds of $T^*M.$
\begin{itemize}
    \item Let $\Lambda\subset T^*M\setminus 0,$ be a closed sub-manifold of dimension $n.$ Then $\Lambda$ is a conic Lagrangian manifold if and only if the 1-form $\alpha$ in \eqref{sigmaM} vanishes on $\Lambda.$
    \item Let $\Sigma\subset X,$ be a submanifold of dimension $k.$ Then its conormal bundle $N^*\Sigma$ is a conic Lagrangian manifold. 
\end{itemize}
The Lagrangian manifolds have the following property.
\begin{itemize}
    \item Let $\Lambda\subset T^{*}M\setminus 0,$ be a conic Lagrangian manifold and let 
    \begin{equation}
        d\pi_{(x,\xi)}:T_{(x,\xi)}\Lambda\rightarrow T_{x}M,
    \end{equation}have constant rank equal to $k,$ for all $(x,\xi)\in \Lambda.$ Then, each $(x,\xi)\in \Lambda$ has a conic neighborhood $\Gamma$ such that 
    \begin{itemize}
        \item[1.] $\Sigma=\pi(\Gamma\cap \Lambda)$ is a smooth manifold of dimension $k.$
        \item[2.] $\Gamma\cap \Lambda$ is an open subset of $N^*\Sigma.$
    \end{itemize}
\end{itemize}

The Lagrangian manifolds have a local representation defined in terms of {\it phase functions} that can be defined as follows. For this, let us consider a local trivialisation $M\times (\mathbb{R}^n\setminus 0), $ where we can assume that $M$ is an open subset of $\mathbb{R}^n.$

\begin{definition}[Real-valued phase functions]\label{Real:defin} Let $\Gamma$ be a cone in $M\times (\mathbb{R}^N\setminus 0). $ A smooth function $\phi:M\times (\mathbb{R}^N\setminus 0)\rightarrow \mathbb{R}, $ $(x,\theta)\mapsto \phi(x,\theta),$ is a real {\it phase function} if, it is homogeneous of degree one in $\theta$ and has no critical points as a function of $(x,\theta),$ that is 
\begin{equation}
 \forall t>0,\,   \phi(x,t\theta)=t\phi(x,\theta), \textnormal{   and   }d_{(x,\theta)}\phi(x,\theta)\neq 0,\forall (x,\theta)\in M\times (\mathbb{R}^N\setminus 0).
\end{equation}Additionally, we say that $\phi$ is a  {\it non-degenerate phase function} in $\Gamma,$ if for any $(x,\theta)\in \Gamma$ such that $d_{\theta}\phi(x,\theta)=0,$ one has that 
\begin{equation}
    d_{(x,\theta)}\frac{\partial \phi}{\partial\theta_{j}}(x,\theta),\,\,1\leq j\leq N,
\end{equation}is a system of linearly independent vectors on $\mathbb{R}$.
\end{definition}
The following facts describe locally a Lagrangian manifold in terms of a phase function.
\begin{itemize}
    \item Let $\Gamma$ be a cone in $M\times (\mathbb{R}^N\setminus 0), $ and let $\phi$ be a non-degenerate phase function in $\Gamma.$ Then, there exists an open cone $\tilde{\Gamma}$ containing $\Gamma$ such that the set
    \begin{equation}
        U_{\phi}=\{(x,\theta)\in \tilde{\Gamma}:d_\theta \phi(x,\theta)=0\},
    \end{equation}is a smooth conic  sub-manifold of $M\times (\mathbb{R}^N\setminus 0)$ of dimension $n.$ The mapping 
    \begin{equation}
        L_{\phi}: U_\phi\rightarrow T^*M\setminus 0,\,\, L_{\phi}(x,\theta)=(x,d_x\phi(x,\theta)),
    \end{equation}is an immersion. Let us denote $\Lambda_\phi=L_{\phi}(U_\phi).$
    \item Let $\Lambda\subset T^*M\setminus 0$ be a sub-manifold of dimension $n.$ Then,  $\Lambda$ is a conical Lagrangian manifold if and only if  every $(x,\xi)\in \Lambda$ has a conic neighborhood $\Gamma$ such that $\Gamma\cap \Lambda=\Lambda_\phi, $ for some non-degenerate phase function $\phi.$ 
\end{itemize}
\begin{remark}
The cone condition on $\Lambda$ corresponds to the homogeneity of the phase function.
\end{remark}
\begin{remark} Although we have presented the previous Definition \ref{Real:defin} of non-degenerate real phase function in the case of a real function of  $(x,\theta),$ the same can be defined if one considers functions of $(x,y,\theta).$ Indeed, a real-valued phase function $\phi(x,y,\theta)$ homogeneous of order 1 at $\theta\neq 0$  that satisfies the following two conditions
 \begin{equation}
        \textnormal{det}\partial_x \partial_\theta(\phi(x,y,\theta))\neq 0,\,\, \textnormal{det}\partial_y \partial_\theta(\phi(x,y,\theta))\neq 0,\,\,\theta\neq 0,
    \end{equation} is called  non-degenerate. 
\end{remark}
\begin{remark}
 For a symplectic manifold $M$ of dimension $2n,$ we will denote by $\widetilde{M}$ its almost analytic continuation in $\mathbb{C}^{2n}$ (see   \cite[Page 10]{Ruzhansky:CWI-book} for details about the construction of $\widetilde{M},$ as well as many details in \cite{Melin:Sjostrand1975}). For completeness, we present such a notion in the following subsection.
\end{remark}
\subsection{Almost analytic continuation of real manifolds}\label{2:2} To define the almost analytic continuation of a manifold we require some preliminaries. We record that a function $f:U\to \mathbb{C}$ defined on an open subset $U\subset \mathbb{C}^n$ is called {\it almost analytic} in $U_{\mathbb{R}}:=U\cap \mathbb{R}^n,$ if $f$ satisfies the Cauchy-Riemann equations on $U_{\mathbb{R}},$ that is, if $\overline{\partial}f$ and all its derivatives vanish in $U_\mathbb{R}.$ Here, as usual, $\overline{\partial}=1/2(\partial_x+i\partial_y).$

One defines an {\it almost analytic extension} of a manifold $M$ requiring that the corresponding coordinate functions are almost analytic in $M.$ Here, we are going to present this notion carefully. 

Let $\Omega\subset \mathbb{R}^n$ and let $\rho:\Omega\rightarrow \mathbb{R}$ be a non-negative and Lipschitz function. A function $f:\Omega\rightarrow \mathbb{R}$ is called $\rho$-flat in $\Omega$ if for every compact set $K\subset \Omega,$ and for every integer $N\geq 0,$ one has the growth estimate $|f(x)|\lesssim_{K,N} \rho(x)^{N},$ for all $x\in K.$ This notion defines an equivalence relation on the space of mapping on $\Omega:$
$$f\textnormal{ and }g \textnormal{ are }\rho\textnormal{-equivalent if }f-g\textnormal{ is }\rho\textnormal{-flat on }\Omega. $$ If $K_0\subset \Omega$ is a compact set, we will say that $f$ is {\it flat } on $K_0,$ if it is $\rho$-flat with $\rho(x):=\textnormal{dist}(x,K_0).$ One has the following property:
\begin{itemize}
    \item let $f\in C^{\infty}(\Omega)$ be $\rho$-flat. Then all its derivatives are $\rho$-flat and $f$ is flat on $K_0$ if and only if $D^{\alpha}f=0$ for all $x\in K_0$ and all $\alpha.$ 
\end{itemize}
Let $G$ be an open subset of $\mathbb{C}^n$ and let $K$ be a closed subset of $G.$ A  function $f\in C^\infty(G)$ is {\it almost analytic} on $K$ if the functions $\partial_j f$ are flat on $K,$ for all $1\leq j\leq n.$ For an open set  $\Omega\subset \mathbb{R}^n,$ we will denote
\begin{equation}
   \tilde{\Omega}:=\Omega+i\mathbb{R}^n.
\end{equation}One can make the identification $\Omega\cong \tilde{\Omega}\cap\{z:\textnormal{Im}(z)=0 \}.$ Note that
every function $f\in C^{\infty}(G_{\mathbb{R}})$ defines an equivalence class of almost analytic functions on $G_\mathbb{R},$ which consist of functions in $C^\infty(G)$ which are almost analytic in $G_\mathbb{R},$ modulo functions being flat on $G_{\mathbb{R}}.$ Any representative of this class is called an {\it almost analytic continuation of }$f$ in $G.$

Let $O$ be an open subset of $\mathbb{C}^n.$ Let $M$ be a smooth submanifold of codimension $2k$ of $O,$ and let $K$ be a closed subset of $O.$ Then, $M$ is called {\it almost analytic }on $K,$ if for any point $z_0\in K,$ there exists an open set $U\subset O,$ and $k$-functions $f_{j},$ such that every $f_j$ is almost analytic on $K\cap U,$
and such that in $U,$ $M$ is defined by the zero-level sets $f_j(z)=0,$ with the differentials $df_{j},$ $1\leq j\leq k,$ being linearly independent over $\mathbb{C}.$

Two almost analytic submanifolds $M_1$ and $M_2$ of $O$ are {\it equivalent} if they have the same dimension and the same intersection with $\mathbb{R}^n,$ namely, that $$M_{1,\mathbb{R}}=M_1\cap \mathbb{R}^n=M_2\cap \mathbb{R}^n=M_{2,\mathbb{R}}=:M_{\mathbb{R}},$$ and locally $f_j-g_j$ are flat functions on $M_{\mathbb{R}}$ where $f_j$ and $g_j$ are the functions that define $M_1$ and $M_2,$ respectively. 

A real manifold $\Omega$ defines an equivalence class of almost analytic manifolds in $\tilde{\Omega}.$ A representative of this class is called an {\it almost analytic continuation } of $\Omega$ in $\mathbb{C}^n.$
Now, let us consider:
\begin{itemize}
    \item a real symplectic manifold $M$ of dimension $d=2n,$ and let $\tilde{M}$ be (modulo its equivalence class) its almost analytic continuation in $\mathbb{C}^{2n}.$
    \item Let $\Lambda\subset \tilde{M}$ be an almost analytic sub-manifold containing the real point $\rho_0\in M,$ and let $(x,\xi)$ be its real symplectic coordinates in a neighbourhood $W$ of $\rho_0.$ 
    \item Let $(\tilde{x},\tilde{\xi})$ be almost analytic continuations of the coordinates $(x,\xi)$ in $W,$ in such a way that $(\tilde{x},\tilde{\xi})$ maps $ \tilde{W}$  diffeomorphically on an open subset of $\mathbb{C}^{2n}.$
    \item Let $g$ be an almost analytic function such that $\textnormal{Im}(g)\geq 0,$ in $\mathbb{R}^n,$ and such that $\Lambda$ is defined in a neighbourhood of $\rho_0$ by the equations $\tilde{\xi}=\partial_{\tilde{x}}g(x),$ $\tilde{x}\in \mathbb{C}^n.$
\end{itemize}
Then, an almost analytic manifold $\Lambda$ satisfying this property, in some real symplectic coordinate system at every point is called a {\it positive Lagrangian manifold}. We conclude this discussion with the following result,  see e.g. Theorem 1.2.1 in \cite{Ruzhansky:CWI-book}.
\begin{proposition}
    Let $M,$ $\Lambda$ and $W$ be as before. If $(\tilde y, \tilde \eta)$ is another almost analytic continuation of coordinates in $\tilde{W}$ and $\Lambda$ is defined by the equation $\tilde\eta=H(\tilde y),$ in a neighbourhood of $\rho_0,$ then $\Lambda$ is locally equivalent to the manifold $\tilde{\eta}=\partial_{\tilde y}h,$ $\tilde{y}\in \mathbb{C}^n,$ where $h$ is an almost analytic function and $\textnormal{Im}(h)\geq 0.$
\end{proposition}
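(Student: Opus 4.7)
The plan is to construct $h$ from $g$ using the generating function of the change of almost analytic symplectic coordinates, and then to read off positivity from the corresponding property of $g$ together with the reality of the generating function on the real locus.

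First, both $\tilde\alpha=\tilde\xi\,d\tilde x$ and $\tilde\beta=\tilde\eta\,d\tilde y$ are almost analytic primitives of the same almost analytic symplectic form on $\tilde W$, so $\tilde\beta-\tilde\alpha$ is closed modulo a one-form flat on $W_\mathbb{R}$. An almost analytic Poincar\'e lemma on a contractible neighbourhood of $\rho_0$ then yields an almost analytic function $S$ on $\tilde W$ with $dS=\tilde\beta-\tilde\alpha$ modulo flat terms. Since both $\tilde\alpha$ and $\tilde\beta$ are real-valued on $W_\mathbb{R}$, I may normalise $S$ to be real on $W_\mathbb{R}$.

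Because $\Lambda$ projects almost analytically diffeomorphically onto both $\tilde x$- and $\tilde y$-space near $\rho_0$, there is an almost analytic reparametrisation $\tilde x=X(\tilde y)$ of $\Lambda$. Restricting the previous identity to $\Lambda$ and using $\tilde\alpha|_\Lambda=\partial_{\tilde x}g\,d\tilde x=d(g\circ X)$ and $\tilde\beta|_\Lambda=H(\tilde y)\,d\tilde y$ gives
\[
H(\tilde y)\,d\tilde y=d\bigl(g(X(\tilde y))+S(\tilde y,H(\tilde y))\bigr)
\]
modulo a one-form flat on $\mathbb{R}^n$. Consequently the almost analytic function
\[
h(\tilde y):=g(X(\tilde y))+S(\tilde y,H(\tilde y))
\]
satisfies $\partial_{\tilde y}h=H$ modulo flat terms, which is precisely the asserted local equivalence of $\Lambda$ with $\{\tilde\eta=\partial_{\tilde y}h\}$.

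To verify $\textnormal{Im}(h)\geq 0$ on $\mathbb{R}^n$, I use that the underlying real symplectic coordinate change on $W_\mathbb{R}$ is real-valued, so for real $\tilde y$ lying in the projection of $\Lambda_\mathbb{R}$ one has $X(\tilde y)\in\mathbb{R}^n$, whence $\textnormal{Im}(g(X(\tilde y)))\geq 0$ by hypothesis, while $\textnormal{Im}(S(\tilde y,H(\tilde y)))=0$ by the reality of $S$ on $W_\mathbb{R}$. For real $\tilde y$ off of this projection these facts persist modulo flat corrections, which can be absorbed into the equivalence class of almost analytic submanifolds. The main obstacle is precisely this positivity step: propagating the sign condition $\textnormal{Im}(g)\geq 0$ through the almost analytic reparametrisation $X$, whose values at a real point $\tilde y$ need not be real, requires exploiting that $\textnormal{Im}(g)$ attains its minimum value zero at each point of the real projection of $\Lambda_\mathbb{R}$, where all its first derivatives therefore vanish, together with the almost analyticity of $g$ and $X$ to control the Taylor expansion of $\textnormal{Im}(h)$ transverse to $\Lambda_\mathbb{R}$.
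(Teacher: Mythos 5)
Your construction of $h$ (a primitive $S$ of $\tilde\eta\,d\tilde y-\tilde\xi\,d\tilde x$ modulo flat terms, restricted to $\Lambda$ and added to $g\circ X$) is consistent with the classical route, and it does yield $\partial_{\tilde y}h=H$ modulo flat terms; note also that the paper itself does not prove this proposition but quotes it (Theorem 1.2.1 of the cited CWI tract, going back to Melin--Sj\"ostrand), so the only thing to assess is whether your argument is complete. It is not: the positivity $\textnormal{Im}(h)\geq 0$ on real $\tilde y$ near the base point is exactly the substantive content of the statement, and your argument establishes it only at those real $\tilde y$ lying in the projection of the real part of $\Lambda$ --- a set which may be a single point, or of positive codimension. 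For all other real $\tilde y$ you appeal to two things that are not available: (i) the claim that $\textnormal{Im}(g)$ attains the value $0$ on the projection of $\Lambda_{\mathbb{R}}$, which is not part of the hypotheses (the definition only gives $\textnormal{Im}(g)\geq 0$ on $\mathbb{R}^n$; a real point of $\Lambda$ forces $\partial_x\textnormal{Im}(g)=0$ there, not $\textnormal{Im}(g)=0$), and (ii) the assertion that the discrepancy ``persists modulo flat corrections, which can be absorbed into the equivalence class.'' The second point is a misuse of the equivalence relation: equivalence of almost analytic manifolds allows modifying $h$ by functions flat on the real intersection, but it cannot repair a sign failure of $\textnormal{Im}(h)$ that is of second (non-flat) order near $\Lambda_{\mathbb{R}}$, let alone at real points a positive distance away, where $X(\tilde y)$ is genuinely non-real and $\textnormal{Im}(g(X(\tilde y)))$ has no a priori sign.

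Concretely, what is missing is the quantitative step that makes the Melin--Sj\"ostrand invariance theorem nontrivial: one must compare $\textnormal{Im}\,h$ at a real point $\tilde y$ with the values of $\textnormal{Im}\,g$ at the nearby \emph{complex} point $X(\tilde y)$ and control the non-real part of $X(\tilde y)$ and of the argument of $S$, which is done either by a careful second-order Taylor expansion transverse to the real points of $\Lambda$ or, more invariantly, by characterising positivity through the Hermitian form $\frac{1}{i}\sigma(v,\bar v)\geq 0$ on the tangent planes of $\Lambda$ at real points and showing this condition is independent of the real symplectic coordinates, then reconstructing $\textnormal{Im}(h)\geq 0$ (modulo flat errors) from it. Your last paragraph names this as ``the main obstacle'' and sketches a direction, but does not carry it out; as written, the proof establishes the generating-function identity $\partial_{\tilde y}h=H$ but not the positivity, so the proposition is not proved.
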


\subsection{Fourier integral operators with real-valued phases}

We can assume that  $X, Y$ are open sets in $\mathbb{R}^n$. One defines the class of
  Fourier integral operators $T\in I^\mu_{\rho}(X, Y;\Lambda)$ by
  the (microlocal) formula
   \begin{equation}\label{EQ:FIO}
     Tf(x)=\smallint\limits_Y\smallint\limits_{\mathbb{R}^N}
   e^{i\Psi(x,y,\theta)} a(x,y,\theta) f(y)d\theta\;dy,
   \end{equation} 
   where the symbol $a$ is a smooth function locally  in the class $S^\mu_{\rho,1-\rho}(X\times Y\times  (\mathbb{R}^n\setminus 0) ),$ with $1/2\leq \rho\leq 1.$ This means that  $a$ satisfies the symbol inequalities
   $$ |\partial_{x,y}^{\beta}\partial_\theta^\alpha
   a(x,y,\theta)|
    \leq C_{\alpha\beta}(1+|\theta|)^{\mu-\rho|\alpha|+(1-\rho)|\beta|},$$
   for $(x,y)$ in any compact subset $K$ of $X\times Y,$ and $\theta \in \mathbb{R}^N\setminus 0,$ while the real-valued phase function $\Psi$ satisfies 
     the following properties:
\begin{itemize}
    \item[1.] $\Psi(x,y,\lambda\theta)=
      \lambda\Psi(x,y,\theta),$ for all $\lambda>0$;
    \item[2.] $d\Psi\not=0$;
    \item[3.] $\{d_\theta\Psi=0\}$ is smooth 
      (e.g. $d_\theta\Psi=0$ implies
      $d_{(x,y,\theta)}\frac{\partial\Psi}{\partial\theta_j}$ are linearly
      independent).
\end{itemize}
Here $\Lambda \subset T^*(X\times Y)\setminus 0$ is a Lagrangian manifold locally parametrised by the phase function $\Psi,$
 $$ \Lambda=\Lambda_\Psi=\{(x,d_x\Psi,y,d_y\Psi): d_\theta\Psi=0\}.$$
\begin{remark}
  The   canonical relation associated with $T$
  is the conic Lagrangian manifold in 
  $T^*(X\times Y)\backslash 0$,  defined by
  $\Lambda'=\{(x,\xi,y,-\eta): (x,\xi,y,\eta)\in \Lambda\}.$
In view of the H\"ormander equivalence-of-phase-functions theorem (see e.g. Theorem 1.1.3 in \cite[Page 9]{Ruzhansky:CWI-book}), the notion of Fourier integral operator becomes independent of the choice of a particular phase function associated to a Lagrangian manifold $\Lambda.$  Because of the diffeomorphism $\Lambda\cong \Lambda',$ we do not distinguish between $\Lambda $ and $\Lambda'$ by saying also that $\Lambda$ is the canonical relation associated with $T.$  
\end{remark}

\subsection{Fourier integral operators with complex-valued phases}\label{2:4}
Now we record the following result for canonical relations, see e.g. Theorem 1.2.1 in \cite{Ruzhansky:CWI-book}.
\begin{proposition}
    Let $\Psi$ be a phase function of the positive type that is defined in a conical neighbourhood. Let $\tilde{\Psi}$ be an almost analytic homogeneous continuation of $\Psi$ in a canonical neighbourhood in $\mathbb{C}^n\times \mathbb{C}^n\times ({\mathbb{C}^{n}}\setminus \{0\}).$ Let \begin{equation}
        C_{\tilde \Psi}=\{(\tilde x, \tilde y, \tilde \theta)\in \mathbb{C}^n\times \mathbb{C}^n\times ({\mathbb{C}^{n}}\setminus \{0\}):\partial_{\theta}\tilde\Psi(\tilde x, \tilde y, \tilde \theta)=0 \}.
    \end{equation}Then, the image of the set $C_{\tilde \Psi}$ under the mapping 
    \begin{equation}
      C_{\tilde \Psi}  \ni(\tilde x, \tilde y, \tilde \theta)\mapsto (\tilde x, \partial_{\tilde x}\tilde\Psi,\partial_{\tilde y}\tilde\Psi )|_{(\tilde x, \tilde y, \tilde \theta)}\in \mathbb{C}^n\times ({\mathbb{C}^{n}}\setminus \{0\})\times ({\mathbb{C}^{n}}\setminus \{0\})
    \end{equation} is a local positive Lagrangian manifold.
\end{proposition}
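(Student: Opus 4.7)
The plan is to transfer the classical proof for real non-degenerate phase functions---namely, that the critical set $\{d_\theta\Psi=0\}$ is carried by the gradient map to a conic Lagrangian---into the almost analytic category, keeping track of the infinite-order flat errors permitted by the Melin--Sj\"ostrand framework. All identities below are to be read modulo functions vanishing to infinite order on the real locus $\mathbb{R}^{3n}\subset\mathbb{C}^{3n}$, in the sense described in Subsection~\ref{2:2}.

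First I would verify that $C_{\tilde\Psi}$ is a smooth almost analytic submanifold of complex dimension $2n$. Since $\Psi$ is a non-degenerate phase function, the differentials $d_{(x,y,\theta)}(\partial_{\theta_{j}}\Psi)$, $j=1,\dots,n$, are linearly independent on $\{d_{\theta}\Psi=0\}$. Taking almost analytic homogeneous continuations preserves this independence in a complex neighbourhood of the real locus, so the holomorphic differentials of $\partial_{\tilde\theta_{j}}\tilde\Psi$ remain independent there. The almost analytic implicit function theorem then realises $C_{\tilde\Psi}$ as an almost analytic submanifold of complex codimension $n$, and conicity in $\tilde\theta$ follows from the homogeneity of $\tilde\Psi$ in $\tilde\theta$.

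Next I would check that the map $L_{\tilde\Psi}:(\tilde x,\tilde y,\tilde\theta)\mapsto(\tilde x,\partial_{\tilde x}\tilde\Psi,\partial_{\tilde y}\tilde\Psi)$ is an immersion and that its image is positive Lagrangian. The immersion property uses the non-degeneracy: a tangent vector to $C_{\tilde\Psi}$ in the kernel of $dL_{\tilde\Psi}$ is forced to have $\dot{\tilde x}=\dot{\tilde y}=0$, and combined with the constraint $\partial_{\tilde\theta}\tilde\Psi=0$ and the invertibility of the mixed Hessian $\partial_{\tilde x,\tilde\theta}^{2}\tilde\Psi$, this forces $\dot{\tilde\theta}=0$ up to flat terms. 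For the Lagrangian property, pull back the canonical one-form $\alpha=\tilde\xi\, d\tilde x+\tilde\eta\, d\tilde y$ to obtain $L_{\tilde\Psi}^{\ast}\alpha=\partial_{\tilde x}\tilde\Psi\, d\tilde x+\partial_{\tilde y}\tilde\Psi\, d\tilde y=d\tilde\Psi-\partial_{\tilde\theta}\tilde\Psi\, d\tilde\theta$. On $C_{\tilde\Psi}$ the last term vanishes modulo flat functions, so $L_{\tilde\Psi}^{\ast}(d\alpha)=d(L_{\tilde\Psi}^{\ast}\alpha)=0$, whence the complex symplectic form annihilates the tangent space of the image. To upgrade Lagrangian to \emph{positive} Lagrangian, I would eliminate $\tilde\theta$ from $\partial_{\tilde\theta}\tilde\Psi=0$ by the almost analytic implicit function theorem, writing $\tilde\theta=\tilde\theta(\tilde x,\tilde y)$, and define $h(\tilde x,\tilde y):=\tilde\Psi(\tilde x,\tilde y,\tilde\theta(\tilde x,\tilde y))$. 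The chain rule together with $\partial_{\tilde\theta}\tilde\Psi|_{C_{\tilde\Psi}}=0$ yields $\partial_{\tilde x}h=\partial_{\tilde x}\tilde\Psi$ and $\partial_{\tilde y}h=\partial_{\tilde y}\tilde\Psi$, so $h$ is an almost analytic generating function for the image. Since $\tilde\Psi$ extends a phase of positive type, one obtains $\textnormal{Im}(h)\geq 0$ on the real locus, which is exactly the positivity condition built into the definition of a positive Lagrangian recalled in Subsection~\ref{2:2}.

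The main obstacle I expect is the careful bookkeeping of flat errors: identities such as $\overline{\partial}\tilde\Psi\equiv 0$ and $\partial_{\tilde\theta}\tilde\Psi=0$ hold only modulo functions flat on $\mathbb{R}^{3n}$; the implicit function theorem must be applied in its almost analytic version, which commutes with $\overline{\partial}$ only up to further flat terms; and the resulting object is not a single submanifold but an equivalence class of almost analytic manifolds sharing the same real intersection. A secondary technical point is to verify that $\textnormal{Im}(h)\geq 0$ is preserved under the choice of representative within the equivalence class, so that positivity is a genuine property of the canonical relation rather than of a particular almost analytic continuation of $\Psi$.
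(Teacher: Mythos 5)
The paper does not give its own proof of this proposition; it is recorded and referred to Theorem~1.2.1 of \cite{Ruzhansky:CWI-book}, which in turn follows Melin and Sj\"ostrand \cite{Melin:Sjostrand1975}. Your sketch reconstructs the standard closed--one--form argument --- verify $C_{\tilde\Psi}$ is an almost analytic submanifold, pull back $\alpha$ to get $L^{\ast}_{\tilde\Psi}\alpha = d\tilde\Psi - \partial_{\tilde\theta}\tilde\Psi\,d\tilde\theta$, observe that this and its differential vanish on $C_{\tilde\Psi}$ up to flat errors --- and this part is sound in spirit, so the issue is not the Lagrangian property itself.

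The genuine gap is in the step that upgrades ``Lagrangian'' to ``\emph{positive} Lagrangian.'' You eliminate $\tilde\theta$ from $\partial_{\tilde\theta}\tilde\Psi = 0$ to write $\tilde\theta = \tilde\theta(\tilde x,\tilde y)$ and set $h(\tilde x,\tilde y) = \tilde\Psi(\tilde x,\tilde y,\tilde\theta(\tilde x,\tilde y))$, but the almost analytic implicit function theorem applied this way requires $\det\partial^{2}_{\tilde\theta\tilde\theta}\tilde\Psi\neq0$ along $C_{\tilde\Psi}$. That is not a consequence of the non-degeneracy of the phase: non-degeneracy only gives $\mathbb{C}$-linear independence of the $d(\partial_{\tilde\theta_j}\tilde\Psi)$ (equivalently, full rank of the $n\times 3n$ Jacobian of $\partial_{\tilde\theta}\tilde\Psi$), while the pure $\tilde\theta$-block can be degenerate --- and typically is whenever the Lagrangian does not project diffeomorphically to the base $(\tilde x,\tilde y)$, which is the generic situation for FIO canonical relations. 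In that case there is no function $h(\tilde x,\tilde y)$ of the form you propose. The definition of positive Lagrangian recalled in Subsection~\ref{2:2} anticipates exactly this: the graph parametrisation $\tilde\xi = \partial_{\tilde x}g$ is required only ``in some real symplectic coordinate system at every point,'' so the missing ingredient is a preliminary real symplectic change of variables bringing the Lagrangian into graph form \emph{before} one can produce the generating function. A secondary point you gloss over is the positivity $\mathrm{Im}(h)\geq 0$: the critical point $\tilde\theta(\tilde x,\tilde y)$ of the almost analytic extension is generically complex even for real $(\tilde x,\tilde y)$, so $\mathrm{Im}(\tilde\Psi)\geq 0$ on the real locus does not directly restrict to give $\mathrm{Im}(h)\geq 0$; one needs the Melin--Sj\"ostrand lemma controlling the sign of $\mathrm{Im}$ of an almost analytic continuation of a non-negative function at nearby complex critical points. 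This is a substantive lemma, not just flat-error bookkeeping, and should be cited or proved rather than absorbed into the closing caveat.
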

Let $X$ and $Y$ be smooth manifolds of dimension $n$ and let us denote by $\widetilde{ T^*(X\times Y)\setminus 0}$  the  almost analytic continuation  of  $ T^*(X\times Y)\setminus 0.$ Let   $$  C\equiv C_{\Phi}=\{(x,d_x\Phi,y, d_y\Phi):d_\theta \Phi=0\}\subset\widetilde{ T^*(X\times Y)\setminus 0}$$  be a {\it smooth positive homogeneous canonical relation.} This means that $C$ is locally parametrised by a complex phase function $\Phi(x,y,\theta),$ satisfying the following properties:
\begin{itemize}
    \item[1.] $\Phi$ is homogeneous of order one in $\theta:$  $\Phi(x,y,t\theta)=t\Phi(x,y,\theta),$ for all $t>0,$
    \item[2.] $\Phi$ has no critical points on its domain, i.e. $d\Phi\neq 0;$
    \item[3.] $\{d_\theta \Phi(x,y,\theta)=0\}$ is smooth (e.g. $d_\theta \Phi=0$ implies that $d\frac{\partial \Phi}{\partial\theta_j}$ are independent over $\mathbb{C}$),
    \item[4.] $\textnormal{Im}(\Phi)\geq 0.$
\end{itemize}When the last condition is satisfied one says that the complex phase function $\Phi$ is of {\it positive type}. Then, the class  $I^\mu_\rho(X,Y;C)$ of Fourier integral operators $T$ is determined modulo $C^{\infty}$ by those integral operators $T$ that locally have an integral  kernel of the form
\begin{align*}
    A(x,y)=\smallint\limits_{\mathbb{R}^N}e^{i\Phi(x,y,\theta)}a(x,y,\theta)d\theta,
\end{align*}where $a(x,y,\theta)$ is a symbol of order $\mu+\frac{N-n}{2}.$ 
By the equivalence-of-phase-function theorem, we can always assume that $N=n,$ and that the symbol $a$ satisfies the type $(\rho,1-\rho)$-estimates
\begin{equation}
    |\partial_{x,y}^{\beta}\partial_\theta^\alpha a(x,y,\theta)|\leq C_{\alpha,\beta}(1+|\theta|)^{\mu-\rho|\alpha|+(1-\rho)|\beta|},\,\,\frac{1}{2}\leq \rho\leq 1,
\end{equation}locally uniformly in $(x,y).$  

\subsection{Asymptotics for oscillatory integrals}
Now, we recall two principles used in this work: the principle of non-stationary phase and the principle of stationary phase, respectively.

\begin{proposition}[\cite{SteinBook1993}, Page 342]\label{Non-stationary:phase}
    Suppose that $a $ is smooth and supported in the unit ball, also let $\phi$ be real-valued so that for some multi-index $\alpha$ with $k=|\alpha|>0,$ we have that 
    $$|\partial_x^\alpha \phi|\geq 1 $$
    throughout  the support of $a.$ Then,
    $$\left|\smallint_{\mathbb{R}^n}e^{i\lambda \phi(\xi)}a(\xi)d\xi\right|\leq C_{k,\phi}\lambda^{-\frac{1}{k}}(\Vert a\Vert_{L^\infty}+\Vert\nabla a\Vert_{L^1}).  $$ The constant $C_{k,\phi}$ is independent of $\lambda>0,$ and of $a,$ and remains bounded as long as the $C^{k+1}$
norm of $a$ remains bounded.
\end{proposition}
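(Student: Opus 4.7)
The plan is to reduce the $n$-dimensional estimate to a one-dimensional van der Corput type inequality via Fubini, and then to prove the 1D estimate by induction on the order $k$ of the derivative.

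First I would perform a coordinate reduction. Since the hypothesis only involves a single multi-index $\alpha$ with $|\alpha|=k$, after a smooth change of variables (or, more simply, a permutation of coordinates together with one-variable differentiations) we may assume $\alpha=(k,0,\ldots,0)$, so that $|\partial_{\xi_1}^{k}\phi(\xi)|\geq 1$ throughout $\operatorname{supp}(a)$. Writing $\xi=(\xi_1,\xi')$ with $\xi'\in\mathbb{R}^{n-1}$ and invoking Fubini, the task is to control
\begin{equation*}
I(\xi'):=\int_{\mathbb{R}}e^{i\lambda\phi(\xi_1,\xi')}a(\xi_1,\xi')\,d\xi_1
\end{equation*}
uniformly in $\xi'$, and then integrate the resulting bound in $\xi'$ over the unit ball of $\mathbb{R}^{n-1}$.

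Next I would establish the one-dimensional van der Corput bound: if $\phi$ is smooth and real-valued with $|\phi^{(k)}|\geq 1$ on an interval $J$, and $\psi$ is smooth on $J$, then
\begin{equation*}
\left|\int_{J}e^{i\lambda\phi(t)}\psi(t)\,dt\right|\leq c_k\lambda^{-1/k}\bigl(\|\psi\|_{L^{\infty}(J)}+\|\psi'\|_{L^{1}(J)}\bigr).
\end{equation*}
The amplitude-free version proceeds by induction on $k$. For $k=1$, the hypothesis forces $\phi'$ to be strictly monotone and $|\phi'|\geq 1$, so a single integration by parts $\int e^{i\lambda\phi}\,dt=\int\frac{1}{i\lambda\phi'}\frac{d}{dt}(e^{i\lambda\phi})\,dt$ yields the $\lambda^{-1}$ bound because $1/\phi'$ is monotone. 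For $k\geq 2$, one splits $J$ into the sub-level set $\{|\phi^{(k-1)}|<\delta\}$, which (since $\phi^{(k)}$ is bounded below) is a single interval of length $O(\delta)$ contributing $O(\delta)$ to the integral, and its complement, where $|\phi^{(k-1)}|\geq\delta$; there the inductive hypothesis applied to the rescaled phase $\phi/\delta$ gives the bound $c_{k-1}(\lambda\delta)^{-1/(k-1)}$. Optimising by choosing $\delta=\lambda^{-1/k}$ produces $c_k\lambda^{-1/k}$. To restore the amplitude I would introduce the primitive $F(t)=\int_{-\infty}^{t}e^{i\lambda\phi(s)}\,ds$, which the previous step shows is uniformly $O(\lambda^{-1/k})$, and integrate by parts: $\int e^{i\lambda\phi}\psi\,dt=-\int F\psi'\,dt+\text{boundary}$; the boundary contribution is controlled by $\|\psi\|_{L^{\infty}}$ and the integral by $\|F\|_{\infty}\|\psi'\|_{L^{1}}$.

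Finally I would reassemble the $n$-dimensional bound. Applying the one-dimensional estimate to $I(\xi')$ with $\psi(\cdot)=a(\cdot,\xi')$ gives
\begin{equation*}
|I(\xi')|\leq c_k\lambda^{-1/k}\bigl(\|a(\cdot,\xi')\|_{L^{\infty}}+\|\partial_{\xi_1}a(\cdot,\xi')\|_{L^{1}}\bigr),
\end{equation*}
and integrating in $\xi'$ over the unit ball in $\mathbb{R}^{n-1}$ converts the first term into $C_n\|a\|_{L^{\infty}}$ and the second into $\|\partial_{\xi_1}a\|_{L^{1}}\leq\|\nabla a\|_{L^{1}}$, because $\operatorname{supp}(a)$ is contained in the unit ball. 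The dependence of the constant on $\phi$ enters through the preliminary coordinate straightening and through the $C^{k+1}$ control on $\phi$ needed when iterating the sub-level set decomposition.

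I expect the inductive step in the $k\geq 2$ van der Corput estimate to be the main obstacle, since it requires the correct scaling in $\delta$ and a careful use of the fact that $\phi^{(k-1)}$ itself has a derivative of absolute value at least one, so its sub-level sets are single intervals of the right length. Everything else is bookkeeping: the Fubini reduction is essentially automatic under the unit-ball support hypothesis, and the passage from the amplitude-free oscillatory integral to the bound involving $\|a\|_{L^{\infty}}+\|\nabla a\|_{L^{1}}$ is a single integration by parts.
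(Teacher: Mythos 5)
The gap is in your first step, the reduction to $\alpha=(k,0,\ldots,0)$. A permutation of coordinates cannot turn a mixed monomial operator such as $\partial^\alpha=\partial_1\partial_2$ (with $k=2$) into a pure one $\partial_1^2$; the operator $\partial^\alpha$ transforms as a symmetric $k$-tensor under linear changes of variables, and a single linear map takes a mixed monomial to a \emph{sum} of monomials, never to a multiple of $\partial_{y_1}^k$. More to the point, the hypothesis $|\partial^\alpha\phi|\geq 1$ does not force any single directional derivative $(\eta\cdot\nabla)^k\phi$ to be bounded away from zero on the whole support. A concrete $k=2$ obstruction: take $\phi(x)=x_1x_2+f(x_1)+g(x_2)$ with $f''$ and $g''$ smooth and oscillating over $[-M,M]$ on the unit ball for $M$ large; then $\partial_1\partial_2\phi\equiv 1$, while $(\eta\cdot\nabla)^2\phi=f''(x_1)\eta_1^2+2\eta_1\eta_2+g''(x_2)\eta_2^2$ vanishes somewhere on the ball for \emph{every} unit vector $\eta$. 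So a single rotation followed by Fubini cannot work.

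The correct reduction (which is what Stein does in the cited Proposition) uses the polarization identity for symmetric tensors: the symbol $\xi^\alpha$, $|\alpha|=k$, can be written as a finite linear combination $\sum_j c_j(\eta_j\cdot\xi)^k$ of $k$-th powers of linear forms, whence $\partial^\alpha=\sum_j c_j(\eta_j\cdot\nabla)^k$. Then $|\partial^\alpha\phi|\geq 1$ forces, at every point of the support, $|(\eta_j\cdot\nabla)^k\phi|\geq c>0$ for at least one $j$; a compactness argument and a finite smooth partition of unity subordinate to this open cover reduce matters to finitely many pieces, on each of which a rotation gives $|\partial_1^k\phi|\geq c$. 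Once this is inserted, the rest of your argument --- the sub-level-set induction for the $1$D van der Corput bound with the optimization $\delta=\lambda^{-1/k}$, the restoration of the amplitude via the primitive $F(t)=\int e^{i\lambda\phi}$ and one integration by parts, and the Fubini reassembly producing the $\Vert a\Vert_{L^\infty}+\Vert\nabla a\Vert_{L^1}$ bound --- is correct and follows the same route as Stein's. Note that the paper itself gives no proof of this proposition (it cites Stein's book), so this comparison is against Stein's argument, which you reconstruct apart from the reduction step.
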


\begin{proposition}[\cite{Wolf2003}, Proposition 6.4]\label{Stationar:phase} Let $\phi$ be a smooth function and assume that $\nabla \phi(p)=0.$ Assume also that the Hessian $H_{\phi}(p)=(\partial_{ij}^2\phi(\xi))|_{\xi=p}$ of $\phi$ at $\xi=p,$   is an invertible matrix. Let $\mu$ be the signature of $H_{\phi}(p),$ and let $\Delta=2^{-n}\det(H_{\phi}(p)). $ Let $a:=a(\xi)\in C^\infty_0(\mathbb{R}^n)$ be supported in a small neighbourhood of $\xi=p.$ Then for any $N\in \mathbb{N},$ the integral
$$I(\lambda)=\smallint_{\mathbb{R}^n} e^{i\pi \lambda\phi(\xi)}a(\xi)d\xi,$$ satisfies the asymptotic expansion
    \begin{equation}
        I(\lambda)=e^{\pi i\lambda \phi(p)}e^{-\frac{\mu \pi i }{4}}\Delta^{-\frac{1}{2}}\lambda^{-\frac{n}{2}}\left(a(p)+\sum_{j=1}^N\lambda^{-j}D_{2j}a(p)+O(\lambda^{-(N+1)})\right),
    \end{equation}where $D_{2j}$ are differential operators of order $2j,$ with smooth coefficients depending on $\phi,$ and bounds for finitely many derivatives of $a.$
\end{proposition}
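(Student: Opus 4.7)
I plan to prove Proposition \ref{Stationar:phase} by three classical reductions: localisation around $p$, straightening of the phase via the Morse lemma, and explicit Gaussian evaluation combined with a Taylor expansion. To execute the first step, I would introduce a cutoff $\chi\in C^\infty_0(\mathbb{R}^n)$ equal to $1$ on a small ball $B(p,\delta)$ and supported in $B(p,2\delta)$, with $\delta$ small enough that $p$ is the only critical point of $\phi$ in $B(p,2\delta)$. Splitting $a=a\chi+a(1-\chi)$, the contribution from $a(1-\chi)$ is $O(\lambda^{-M})$ for every $M\in\mathbb{N}$ by iterated integration by parts against the non-stationary phase operator $L=(i\pi\lambda|\nabla\phi|^2)^{-1}\nabla\phi\cdot\nabla$, in the spirit of Proposition \ref{Non-stationary:phase}; this reduces matters to $a$ supported in an arbitrarily small neighbourhood of $p$.

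For the second step, by the Morse lemma applied to $\phi-\phi(p)$ at the non-degenerate critical point $p$, there is a smooth diffeomorphism $\Psi$ with $\Psi(0)=p$ such that
\begin{equation*}
\phi(\Psi(y)) = \phi(p) + \tfrac{1}{2}\sum_{j=1}^{n}\epsilon_j y_j^2,
\end{equation*}
where $\epsilon_j\in\{\pm 1\}$ and $\mu=\sum_{j}\epsilon_j$ is the signature of $Q:=H_\phi(p)$; the identity $D\Psi(0)^{T} Q\, D\Psi(0)=\operatorname{diag}(\epsilon_j)$ forces $|\det D\Psi(0)|=|\det Q|^{-1/2}$. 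Setting $b(y)=a(\Psi(y))|\det D\Psi(y)|$ yields $b\in C^\infty_0$ with $b(0)=a(p)|\det Q|^{-1/2}$, and after the change of variables
\begin{equation*}
I(\lambda) = e^{i\pi\lambda\phi(p)}\smallint_{\mathbb{R}^n} e^{i\pi\lambda\sum_j\epsilon_j y_j^2/2}\,b(y)\,dy.
\end{equation*}

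For the third step, apply in each coordinate the Fresnel identity $\int_{\mathbb{R}} e^{i\alpha t^2}\,dt = e^{i\pi\operatorname{sgn}(\alpha)/4}\sqrt{\pi/|\alpha|}$ (obtained by contour rotation from the Gaussian integral $\int e^{-\beta t^2}\,dt=\sqrt{\pi/\beta}$), combined with the Plancherel identity, to obtain
\begin{equation*}
I(\lambda) = e^{i\pi\lambda\phi(p)} e^{-i\mu\pi/4}\Delta^{-1/2}\lambda^{-n/2}\smallint_{\mathbb{R}^n} e^{-i\pi\sum_j\epsilon_j\eta_j^2/(2\lambda)}\widehat{b}(\eta)\,d\eta,
\end{equation*}
where the factor $|\det Q|^{-1/2}$ coming from $b(0)$ combines with the product $(2/\lambda)^{n/2}$ of Fresnel prefactors into $\Delta^{-1/2}\lambda^{-n/2}$ with $\Delta=2^{-n}\det Q$, and the product of Fresnel phases yields $e^{-i\mu\pi/4}$. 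Taylor-expanding the remaining exponential as $\sum_{j=0}^{N}\lambda^{-j}P_j(\eta)+R_N(\eta,\lambda)$ with $P_j$ a polynomial of degree $2j$ and $|R_N|\lesssim \lambda^{-(N+1)}|\eta|^{2(N+1)}$, and integrating term by term against $\widehat{b}$, produces constant-coefficient differential operators of order $2j$ applied to $b$ at $0$; the chain rule through $\Psi$ then converts these into the $D_{2j}$ acting on $a$ at $p$ with coefficients polynomial in the derivatives of $\phi$ at $p$, while the $j=0$ contribution reproduces $a(p)$.

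The main technical obstacle is the bookkeeping in this last step: one must justify the interchange of the formal $1/\lambda$ expansion with the Fourier integral (controlling $R_N$ via the Schwartz decay of $\widehat{b}$, hence by sufficiently many derivatives of $a$), and track carefully the signature factor, the determinant $\Delta^{-1/2}$, and the Jacobian contributions from the Morse-lemma coordinate change so as to land on exactly the prefactor $e^{-i\mu\pi/4}\Delta^{-1/2}\lambda^{-n/2}$ and the stated form of the $D_{2j}$.
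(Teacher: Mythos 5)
Your proposal is correct and follows essentially the same route as the source: the paper does not prove Proposition \ref{Stationar:phase} but quotes it from Wolff \cite{Wolf2003}, whose proof is exactly your three steps (non-stationary-phase localisation away from $p$, Morse-lemma straightening, exact evaluation of the imaginary Gaussian via Plancherel followed by a Taylor expansion of the quadratic multiplier in $1/\lambda$, with the remainder controlled by the Schwartz decay of $\widehat{b}$). The only point to settle is the bookkeeping you yourself flag: with the conventions as written your Fresnel phases multiply to $e^{+i\mu\pi/4}$ and the dual exponent is $e^{-2\pi i\epsilon_j\eta_j^2/\lambda}$ rather than $e^{-i\pi\epsilon_j\eta_j^2/(2\lambda)}$, so reconciling this with the stated prefactor $e^{-\mu\pi i/4}\Delta^{-1/2}\lambda^{-n/2}$ is a matter of fixing the Fourier-transform and signature conventions, not of substance.
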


\section{Reduction to phases of strictly positive type}\label{reduction}

 We say that a complex phase function is of {\it strictly positive type}, if $\textnormal{Im}(\Phi(x,\theta))>0,$ when $|\theta|\sim 1.$
 The aim of this section is to show that our main Theorem \ref{Main:theorem} follows from the following reduced version.

\begin{theorem}\label{Main:theorem:2} Let $T$ be a Fourier integral operator of order $-(n-1)/2$ associated with a canonical relation parametrised by a complex phase $\Phi$  of strictly positive type, that is,  there exists $\varkappa_0>0$ such that for all $(x,\theta)\in \overline{V}\times \overline{\{\theta: |\theta|\sim 1\}},$ one has the lower bound $$\textnormal{Im}(\Phi(x,\theta))>\varkappa_0,$$ 
and additionally satisfying the local graph condition \eqref{ass}. Then $T$ is of weak type (1,1).
\end{theorem}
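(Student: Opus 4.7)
The plan is to implement the factorisation strategy of Tao \cite{Tao}, modified so that we do not lose regularity by passing through the symbol class $S^{0}_{1/2,1/2}$. By Assumption \ref{assumption:local:graph} we fix a real $\tau$ for which $\Phi_\tau:=\textnormal{Re}(\Phi)+\tau\,\textnormal{Im}(\Phi)$ is a non-degenerate real homogeneous phase, and we rewrite
\[
Tf(x)=\smallint_{\mathbb{R}^n} e^{i\Phi_\tau(x,\xi)}\,a(x,\xi)\,e_{\Phi}(x,\xi)\,\widehat{f}(\xi)\,d\xi,\qquad e_{\Phi}(x,\xi):=e^{-(1+i\tau)\textnormal{Im}(\Phi(x,\xi))}.
\]
The strict positivity assumption, after rescaling by homogeneity in $\xi$, supplies the key structural gain $|e_{\Phi}(x,\xi)|\lesssim e^{-c|\xi|}$ on the support of $a$; this is precisely the feature unavailable in the general positive-type case that will allow us to absorb $e_{\Phi}$ into the averaging factor without paying the $(1/2,1/2)$ regularity penalty.

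First I would perform a Littlewood-Paley decomposition $T=\sum_{k\geq 0}T_k$ adapted to frequency shells $|\xi|\sim 2^k$ and reduce, via a microlocal rotation, to a chart in which $|\xi_n|\sim |\xi|$. Within each shell, following \cite{Tao}, I would carry out a second dyadic decomposition in the variable $\xi'=(\xi_1,\dots,\xi_{n-1})$ organising the symbol into pieces supported on ellipsoids of eccentricity $2^{-k/2}$, whose orientations vary smoothly with the Hessian of $\Phi_\tau$; this is what avoids Kakeya-type covering lemmas. Each piece then splits, modulo a Calder\'on-Zygmund-controllable remainder coming from the physical-space tube decomposition, as a non-degenerate part $T_k^{\mathrm{nd}}$ plus a genuinely degenerate part. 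I then apply the factorisation
\[
T_k^{\mathrm{nd}}=S_k A_k+E_k,
\]
where $S_k$ is a pseudo-differential operator with symbol essentially $|\xi_n|^{(n-1)/2}a(x,\xi)$ truncated to $|\xi|\sim 2^k$, hence of order $0$ and \emph{classical} type $(1,0)$ (rather than $(1/2,1/2)$, which is the point of not reducing to $\Phi_\tau$ at the symbol level), while $A_k$ is an averaging operator whose kernel carries both the canonical-relation data along the Lagrangian and the absorbed factor $e_{\Phi}$. The errors $E_k$ arising from the factorisation will be controlled by the non-stationary phase principle (Proposition \ref{Non-stationary:phase}) applied along the $\xi'$ ellipsoids, the gain in $2^k$ offsetting the $(n-1)/2$ loss.

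Summing, $S=\sum_k S_k$ belongs to the H\"ormander class $S^{0}_{1,0}$ and is of weak $(1,1)$ type by classical Calder\'on-Zygmund theory, so the weak $(1,1)$ bound for $T$ reduces to the $L^1(\mathbb{R}^n)$-boundedness of $A=\sum_k A_k$. This is the step I expect to be the main obstacle: in \cite{Tao} the corresponding $L^1$-bound follows from the uniform $L^1$-boundedness of Littlewood-Paley pieces on the eccentric ellipsoid family, but here those pieces are multiplied by the type $(1/2,1/2)$ factor $e_{\Phi}$. The strategy is to use the pointwise bound $|e_{\Phi}(x,\xi)|\lesssim e^{-c|\xi|}$ and its derivative counterparts on the ellipsoid support to produce an exponential gain that dominates the polynomial loss coming from $(1/2,1/2)$-type derivatives. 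Concretely I would introduce a further dyadic splitting of $e_{\Phi}$ in the $\xi'$ eccentric scale and apply stationary phase (Proposition \ref{Stationar:phase}) fibrewise in $\xi'$ to express the kernel of $A_k$ as an absolutely convergent sum whose $L^1$-mass in the output variable is bounded uniformly in $k$. Once this uniform kernel estimate is secured, $A$ is $L^1$-bounded, $SA$ is weak $(1,1)$, the error $E=\sum_k E_k$ and the degenerate contribution are summed by the Calder\'on-Zygmund mechanism, and Theorem \ref{Main:theorem:2} follows.
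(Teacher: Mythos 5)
Your proposal follows essentially the same route as the paper: factorise $T_{\text{nondeg}}=SA+E$ following Tao, keep the pseudo-differential factor $S$ with symbol $\lambda^{(n-1)/2}a(x,\xi)/\psi(x,\omega)$ in the classical class $S^0_{1,0}$ by absorbing $e_\Phi=e^{-(1+i\tau)\mathrm{Im}\Phi}$ into the averaging operator $A$, and exploit the strict-positivity exponential decay $\sup_k 2^{kn}e^{-2^k\varkappa_0}<\infty$ to get the uniform $L^1$ kernel bound for $A$, with the degenerate piece handled separately via the continuous (ellipsoid-averaged) second dyadic decomposition. The only place you deviate is cosmetic: the paper establishes the $L^1$-boundedness of $A$ directly via summation by parts, Fourier inversion, a change-of-variables argument along the diffeomorphisms $x\mapsto\nabla_\xi\Phi_\tau(x,\omega)$, and Schur's test, rather than by a further dyadic splitting of $e_\Phi$ together with fibrewise stationary phase in $\xi'$; likewise the degenerate component and the error $E$ are shown to be $L^1$-bounded operators outright, not folded into a Calder\'on--Zygmund mechanism which in the paper is used only for the order-zero operator $S$.
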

 The proof of Theorem \ref{Main:theorem:2} will be addressed in Section \ref{Section:Main:theorem:2}. Assuming this statement we give a proof of our main theorem.
 \begin{proof}[Proof of Theorem \ref{Main:theorem}] Since $\textnormal{Im}(\Phi(x,\xi))> 0,\,\,|\xi|\neq 0,$ the compactness of the closed annulus $\overline{\{\theta: |\theta|\sim 1\}},$ and of $\overline{V},$ imply that there exists $\varkappa_0>0$ such that for all $(x,\theta)\in \overline{V}\times \overline{\{\theta: |\theta|\sim 1\}},$ one has the lower bound $$\textnormal{Im}(\Phi(x,\theta))>\varkappa_0.$$ In view of Theorem \ref{Main:theorem:2} we end the proof. 
 \end{proof}

\subsubsection*{Notation} We consider the dimension $n\geq 2$ and the parameter $0<\varepsilon\ll 1.$ We will write $2^{C\varepsilon k},$ and the constant $C>0$ will be independent of $\varepsilon,$ and probably it is not the same in different parts of the manuscript. Always we will keep in mind that the constant $C\varepsilon$ can be chosen small enough by allowing $\varepsilon$ to be arbitrarily small.     

Since $n\geq 2,$ we can define the projective system of co-ordinates $(\lambda,\omega),$ via
\begin{equation}\label{Proj:coor:nota}
    \lambda:=\xi_n,\,\omega:=\overline{\xi}/\xi_n,\,\textnormal{ where }\,\xi=(\overline{\xi},\xi_n),\,\xi_n\in \mathbb{R},\,\overline{\xi}\in \mathbb{R}^{n-1}.
\end{equation}In the same way we will denote 
\begin{equation}
    \lambda'=\xi_n',\,\omega'=\overline{\xi'}/\xi'_n,\,\,\xi'=(\overline{\xi}',\xi_n'),\,\xi'_n\in \mathbb{R},\,\overline{\xi'}\in \mathbb{R}^{n-1},
\end{equation}when $\xi\neq \xi'.$

We will use the standard notation for Littlewood-Paley decompositions. So, we fix a  non-negative radial test function $\phi(\xi)=\phi_0(\xi),$ supported on the ball $B(0,2)=\{\xi:|\xi|\leq 2\},$ and for any $k\in \mathbb{R},$ we define the functions
\begin{equation}\label{Littlewood:Paley:components}
    \phi_{k}(\xi):=\phi(\xi/2^k),\,\,\eta_{k}(\xi)=\phi_{k}(\xi)-\phi_{k-1}(\xi).
\end{equation}Even, when these functions are defined on $\mathbb{R}^{d},$ $d=1,n-1$ we keep the same notation. 

We also will denote by $[x,y]:=\{(1-t)x+ty:0\leq t\leq 1\}$ the segment joining two arbitrary points $x,y\in \mathbb{R}^n.$

\section{Proof of Theorem  \ref{Main:theorem:2}}\label{Section:Main:theorem:2}
The aim of this section is to prove the reduced version of our main Theorem \ref{Main:theorem}, namely, the one in Theorem \ref{Main:theorem:2} for phase functions of strictly positive type. To do this we will combine the following two approaches:
\begin{itemize}
    \item The first one, is a strategy traced back to the works of Melin and Sj\"ostrand  \cite{Melin:Sjostrand1975,Melin:Sjostrand1976}, where any Fourier integral operator $T\in I^\mu(X,Y;C)$ can be written in the form
\begin{equation}
  Tf(x)= \smallint\limits_{\mathbb{R}^n}e^{i(\textnormal{Re}(\Phi(x,\xi))+\tau \textnormal{Im}(\Phi(x,\xi) )}a(x,\xi)e^{-(1+i\tau)\textnormal{Im}(\Phi(x,\xi) )}\widehat{f}(\xi)d\xi ,
\end{equation}with the real parameter $\tau\neq0$ as in the local graph condition \eqref{ass}, the real phase function $\Phi_{\tau}$ is given by 
\begin{equation}\label{Phi:tau:rem}
    \Phi_{\tau}(x,\xi):=\textnormal{Re}(\Phi(x,\xi))+\tau \textnormal{Im}(\Phi(x,\xi) ),
\end{equation}
and  the symbol
$$ a_\tau(x,\xi)= a(x,\xi)e^{-(1+i\tau)\textnormal{Im}(\Phi(x,\xi) )}$$
belongs to the class $S^{-(n-1)/2}_{1/2,1/2}.$ This formula for the operator has shown to be effective when dealing with its $H^1$-$L^1$-boundedness, see \cite{Ruzhansky:CWI-book}.
\item The second one, introduced by Tao \cite{Tao} where for the proof of the weak (1,1) inequality one decomposes the operator $T$ into its degenerate and non-degenerate components, $T_{\textnormal{deg}}$ and $T_{\textnormal{non-deg}}.$  Indeed, this is done using the {\it curvature} notion, which measures the extent to which the phase function fails to be linear. The portion where the curvature is small on each dyadic component of the operator defines the degenerate component $T_{\textnormal{deg}}$ of $T$, and its `complement' $T-T_{\textnormal{deg}},$ defines the non-degenerate component $T_{\textnormal{non-deg}}.$ Among other things, the following are the key points in the factorisation approach in  \cite{Tao}: (i) to prove that  $T_{\textnormal{deg}}$ is bounded on $L^1$; (ii) to construct a pseudo-differential operator $S$ of order zero, and then of weak (1,1) type, according to the Calder\'on-Zygmund theory and an average operator $A,$ which is bounded on $L^1,$ and essentially has the same phase function that $T_{\textnormal{non-deg}},$ in such a way that the error operator $E=T_{\textnormal{nondeg}}-SA$ is bounded on $L^1.$
\end{itemize}
The aim of the next subsections is to show that the two previous approaches are compatible, allowing to factorise the operator $T$ with a complex phase satisfying the conditions in Theorem  \ref{Main:theorem:2}, into more manageable operators. 

\subsection{Degenerate and non-degenerate components}\label{Degenerate:non:degenerate:components}
Let us use the notation $\phi_{k}$ in \eqref{Littlewood:Paley:components} for the Littlewood-Paley partition of unity. Typically, $k$ runs over the set of integers, but also we are going to consider the functions $\phi_{-\varepsilon k}$ where $k$ runs over the set of integers and $\varepsilon>0$ is fixed, and then $ \phi_{-\varepsilon k}(\xi):=\phi(2^{\varepsilon k}\xi).$

In terms of the projective co-ordinates $(\lambda,\omega)$ in \eqref{Proj:coor:nota}, in view of the homogeneity of the phase function $\Phi_{\tau}$ as defined in \eqref{Phi:tau:rem}, we can write $\Phi_\tau(x,\xi)=\lambda\Phi_\tau(x,(\omega,1)),$ when $\xi_n=\lambda\neq 0.$ In such a case we are going to simplify the notation by writing $\Phi_\tau(x,\omega):=\Phi_\tau(x,(\omega,1)).$

Using the Littlewood-Paley decomposition for the Fourier integral operator $T,$ we have
\begin{equation*}
    Tf(x)=\sum_{k\gg 1}\smallint\limits_{\mathbb{R}^n}e^{2\pi i\Phi(x,\xi)}a(x,\xi)\eta_{k}(\xi)\widehat{f}(\xi)d\xi,
\end{equation*}where the inequality $k\gg 1,$ with $k\in \mathbb{N},$ is justified because the symbol $a(x,\xi)$ is supported on $|\xi|\gg 1.$ Now, we decompose $T=T_{\textnormal{deg}}+T_{\textnormal{nondeg}}$ with
\begin{equation*}
    T_{\textnormal{deg}} f(x)= \sum_{k\gg 1}\smallint\limits_{\mathbb{R}^n}e^{2\pi i\Phi_\tau(x,\xi)}a(x,\xi)e^{-(1+i\tau)\textnormal{Im}(\Phi(x,\xi))}\phi_{-\varepsilon k}(J_{\tau}(x,\omega))\eta_{k}(\xi)\widehat{f}(\xi)d\xi,
\end{equation*} where, as above,
\begin{equation}
    \Phi_{\tau}(x,\xi):=\textnormal{Re}(\Phi(x,\xi))+\tau\textnormal{Im}(\Phi(x,\xi)),
\end{equation} and 
\begin{equation}
    J_{\tau}(x,\omega)=\textnormal{det}(\nabla_{\omega}^2(\Phi_\tau(x,\omega))),\, \Phi_\tau(x,\omega):=\Phi_{\tau}(x,(\omega,1)),
\end{equation} where $\Phi_{\tau}(x,\xi)=\lambda\Phi_{\tau}(x,(\omega,1)).$ Here, $J_{\tau}(x,\omega)$ is the {\it ``curvature''} associated to the real-valued phase function $\Phi_{\tau}(x,\omega).$ Since $\tau\neq 0,$ sometimes we also write
$$J(x,\omega)=J_{\tau}(x,\omega)$$ for simplicity.
Note then that $T_{\textnormal{nondeg}}$ is given  by
\begin{equation}\label{non:de:c}
    T_{\textnormal{nondeg}} f(x)= \sum_{k\gg 1}\smallint\limits_{\mathbb{R}^n}e^{2\pi i\Phi_\tau(x,\xi)}a(x,\xi)e^{-(1+i\tau)\textnormal{Im}(\Phi(x,\xi))}(1-\phi_{-\varepsilon k}(J_{\tau}(x,\omega)))\eta_{k}(\xi)\widehat{f}(\xi)d\xi.
\end{equation}

\subsubsection{Symbol classes in projective co-ordinates}

The aim of this subsection is to analyse the behaviour of symbols in the frequency variable with respect to the projective co-ordinates $(\lambda,\omega),$ defined via
\begin{equation}
    \lambda=\xi_n,\,\omega=\overline{\xi}/\xi_n,\,\,\xi:=(\overline{\xi},\xi_n),\,\xi_n\in \mathbb{R},\,\overline{\xi}\in \mathbb{R}^{n-1}.
\end{equation}Although $\lambda$ and $\xi_n$ are equal in value, the radial derivative $\partial_\lambda$ keeps $\omega$ fixed, and the vertical derivative $\partial_{\xi_n}$ keeps $\overline{\xi}$ fixed. In the next lemma we analyse the derivatives of a symbol in terms of the radial and of the angular variables. We note that $\omega\in \mathbb{R}^{n-1},$ and we write $\omega^{J}=\omega_1^{J_1}\cdots\omega_{n-1}^{J_{n-1}},$ when $J\in \mathbb{N}_0^{n-1},$ and also $\omega^{T}=(\omega_1,\cdots,\omega_{n-1})$ for a row vector. 
\begin{lemma}\label{Lemma:1:1} Let us assume that the domain of $a$ contains an open neighborhood of the region $\Omega_M=\{(x,\xi)\in T^{*}\mathbb{R}^n:|\xi_n|>M\},$ where $M>0$ is fixed. Assuming $a:=a(x,\xi)$ smooth in $\Omega_M,$ we have that
\begin{equation}\label{lemma:1}
     \partial^\ell_\lambda a=\sum_{|J|\leq \ell }C_{J}\partial_{\overline{\xi}}^{J}\partial_{\xi_n}^{\ell-|J|}a\cdot \omega^{J},
\end{equation}for some family of coefficients $C_{J}$. Also, 
\begin{equation}\label{lemma:2}
   \partial_\omega^{\beta}a=\partial_{\overline{\xi}}^{\beta}a\cdot  \lambda^{|\beta|},
\end{equation}on $\Omega_M.$
    
\end{lemma}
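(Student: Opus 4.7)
The plan is to reduce both identities to the chain rule for the smooth coordinate change $(\overline{\xi},\xi_n) \leftrightarrow (\lambda,\omega)$ given by $\lambda=\xi_n$ and $\omega = \overline{\xi}/\xi_n$, which is a diffeomorphism on $\Omega_M$ since $|\xi_n|>M$ there. Throughout the proof I would write $\tilde a(x,\lambda,\omega) := a(x,\omega\lambda,\lambda)$, so that the claimed identities are equalities between derivatives of $\tilde a$ in the $(\lambda,\omega)$-chart and derivatives of $a$ in the $(\overline{\xi},\xi_n)$-chart, evaluated along $\overline{\xi}=\omega\lambda$, $\xi_n=\lambda$.

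For the angular identity \eqref{lemma:2}, I would first compute $\partial_{\omega_j}$ at $\lambda$ fixed. Since $\partial_{\omega_j}(\omega\lambda)_i = \lambda\,\delta_{ij}$ and $\partial_{\omega_j}\lambda = 0$, the chain rule gives $\partial_{\omega_j}\tilde a = \lambda\,(\partial_{\overline{\xi}_j} a)$ evaluated at $\overline{\xi}=\omega\lambda$. Because the factor $\lambda$ does not depend on $\omega$, a straightforward induction on $|\beta|$ then yields $\partial_\omega^\beta \tilde a = \lambda^{|\beta|}\,\partial_{\overline{\xi}}^\beta a$, which is the second formula.

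For the radial identity \eqref{lemma:1}, the chain rule applied to $\partial_\lambda$ at $\omega$ fixed gives $\partial_\lambda \tilde a = \sum_{i=1}^{n-1}\omega_i(\partial_{\overline{\xi}_i}a) + \partial_{\xi_n}a$. At the level of differential operators acting on functions of $(\overline{\xi},\xi_n)$, with $\omega$ treated as a frozen parameter, this amounts to the operator identity $\partial_\lambda = \omega\cdot \partial_{\overline{\xi}} + \partial_{\xi_n}$. The coefficients $\omega_i$ and $1$ of this first-order operator are independent of $(\overline{\xi},\xi_n)$, so they commute with each $\partial_{\overline{\xi}_i}$ and $\partial_{\xi_n}$. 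Expanding $(\omega\cdot \partial_{\overline{\xi}} + \partial_{\xi_n})^{\ell}$ via the multinomial theorem for commuting operators produces exactly $\sum_{|J|\leq \ell} C_J\, \omega^J\, \partial_{\overline{\xi}}^J \partial_{\xi_n}^{\ell-|J|}$ with explicit combinatorial $C_J$, and then evaluating at $\overline{\xi}=\omega\lambda$, $\xi_n=\lambda$ delivers the first identity.

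The only conceptual point—and what I would flag rather than a technical obstacle—is that $\omega_i = \overline{\xi}_i/\xi_n$ genuinely depends on $(\overline{\xi},\xi_n)$, so one must resist the temptation to differentiate the $\omega_i$ factors when iterating $\partial_\lambda$. The resolution is that the multinomial expansion is carried out first as a \emph{formal} operator identity with $\omega$ treated as a parameter, and only afterwards is the substitution $\omega=\overline{\xi}/\xi_n$ reinstated; there is no genuine difficulty beyond this bookkeeping.
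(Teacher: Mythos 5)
Your argument is correct and takes essentially the same approach as the paper: establish the one-step chain-rule identities $\partial_\lambda = \omega\cdot\partial_{\overline{\xi}}+\partial_{\xi_n}$ and $\partial_{\omega_j}=\lambda\,\partial_{\overline{\xi}_j}$ (with $\omega$ treated as a frozen parameter under $\partial_\lambda$), and then iterate. The paper carries out the iteration as an explicit induction on $\ell$ without naming the coefficients, whereas you package it as a multinomial expansion of a commuting first-order operator, which has the minor advantage of making $C_J=\ell!/\big(J!\,(\ell-|J|)!\big)$ explicit; the bookkeeping point you flag — that $\omega$ is held fixed when differentiating in $\lambda$, so the $\omega^J$ factors need not be differentiated — is exactly what makes the paper's inductive step go through as well.
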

\begin{proof} Let us prove \eqref{lemma:1}. When $\ell=1,$
    the chain rule gets
\begin{align}\label{case:ell:1}
    \partial_\lambda a=\sum_{j=1}^{n-1}\partial_{\xi_j}a\times \partial_{\xi_j}/\partial{\lambda}+\partial_{\xi_n}a\times \partial{\xi_n}/\partial\lambda= \sum_{j=1}^{n-1}\omega_j\partial_{\xi_j}a+\partial_{\xi_n}a=\omega^T\cdot \nabla_{\overline{\xi}}a+\partial_{\xi_{n}}a.
\end{align}When $\ell=2,$  observe that
\begin{align*}
    \partial_\lambda^2 a &=\partial_\lambda(\omega^T\cdot \nabla_{\overline{\xi}}a+\partial_{\xi_{n}}a)=\omega^T\cdot\partial_\lambda \nabla_{\overline{\xi}}a+\omega^{T}\cdot \nabla_{\overline{\xi}}(\partial_{\xi_n}a)+\partial_{\xi_{n}}^2a\\
    &=\omega^{T}\nabla^{2}_{\overline{\xi}}a\cdot \partial{\overline{\xi}}/\partial\lambda+\omega^{T}\cdot \nabla_{\overline{\xi}}(\partial_{\xi_n}a)+\partial_{\xi_{n}}^2a\\
    &=\omega^{T}\cdot[ \nabla^{2}_{\overline{\xi}}a\cdot \omega]+\omega^{T}\cdot \nabla_{\overline{\xi}}(\partial_{\xi_n}a)+ \partial_{\xi_{n}}^2a.
\end{align*} Mathematical induction on $\ell \in \mathbb{N},$
implies that $\partial^\ell_\lambda a$ can be written in terms of some coefficients $C_{J},$ $J\in \mathbb{N}_0^k$ as
\begin{equation}\label{Identity}
  \partial^\ell_\lambda a=\sum_{|J|\leq \ell }C_{J}\partial_{\overline{\xi}}^{J}\partial_{\xi_n}^{\ell-|J|}a\cdot \omega^{J}.
\end{equation} For the proof of \eqref{Identity}
observe that assuming this identity valid for $\ell>2,$  we have
\begin{align*}
    \partial^{\ell+1}_\lambda a=\partial_{\lambda}\partial^{\ell}_\lambda a=\sum_{|J|\leq \ell }C_{J}\partial_{\lambda}(\partial_{\overline{\xi}}^{J}\partial_{\xi_n}^{\ell-|J|}a)\cdot \omega^{J}.
\end{align*}With $b_{J}=\partial_{\overline{\xi}}^{J}\partial_{\xi_n}^{\ell-|J|}a,$ we apply \eqref{case:ell:1} to deduce that
\begin{align*}
  \sum_{|J|\leq \ell }C_{J}\partial_{\lambda}(\partial_{\overline{\xi}}^{J}\partial_{\xi_n}^{\ell-|J|}a)\cdot \omega^{J}&=\sum_{|J|\leq \ell }C_{J}\partial_{\lambda}(b_J)\cdot \omega^{J}  =\sum_{|J|\leq \ell }\left(\sum_{j=1}^{n-1}C_{J}\partial_{\xi_j}b_J\omega_j\omega^{J}+\partial_{\xi_n}b_J\right)\\
  &=\sum_{|J|\leq \ell }\left(\sum_{j=1}^{n-1}C_{J}\partial_{\xi_j}\partial_{\overline{\xi}}^{J}\partial_{\xi_n}^{\ell-|J|}a\omega_j\omega^{J}+\partial_{\xi_n}\partial_{\overline{\xi}}^{J}\partial_{\xi_n}^{\ell-|J|}a\right)\\
  &=\sum_{|J|\leq \ell }\left(\sum_{j=1}^{n-1}C_{J}\partial_{\overline{\xi}}^{J+e_j}\partial_{\xi_n}^{\ell+1-|J+e_j|}a\omega^{J+e_j}+\partial_{\overline{\xi}}^{J}\partial_{\xi_n}^{\ell+1-|J|}a\right)\\
  &=\sum_{|\tilde{J}|\leq \ell+1 }C_{\tilde{J}}\partial_{\overline{\xi}}^{\tilde{J}}\partial_{\xi_n}^{\ell+1-|\tilde{J|}}a\cdot \omega^{\tilde{J}},
\end{align*}for some family of coefficients $C_{\tilde{J}}.$ The proof of \eqref{lemma:1} is complete. For the proof of \eqref{lemma:2} note that for any $j=1,2,\cdots, n-1,$
\begin{align*}
    \partial_{\omega_j}a=\partial_{\xi_j}a\cdot \partial\xi_{j}/\partial \omega_{j}=\partial_{\xi_j}a\cdot \lambda,
\end{align*}which verifies \eqref{lemma:2} when $|\beta|=1.$ Assuming  \eqref{lemma:2} with $\beta$ fixed, namely, that $  \partial_\omega^{\beta}=\partial_{\overline{\xi}}^{\beta}a\cdot  \lambda^{|\beta|},$ note that for any $1\leq j\leq n-1,$
\begin{equation*}
     \partial_\omega^{\beta+e_j} a= \partial_{\omega_j}\partial_{\omega}^{\beta}a= \partial_{\xi_j}(\partial_{\omega}^{\beta}a)\cdot \lambda=\partial_{\xi_j}\partial_{\overline{\xi}}^{\beta}a\cdot  \lambda^{|\beta|}\cdot \lambda=\partial_{\overline{\xi}}^{\beta+e_j}a\cdot  \lambda^{|\beta+e_j|},
\end{equation*}from where the mathematical induction proves $ \eqref{lemma:2} $ for all $\beta\in \mathbb{N}_0^n.$
\end{proof}

For $1/2\leq \rho\leq 1,$ let $S^{m}_{\rho,\mathscr{C}}$ be the class of symbols  that consists of all functions $a:=a(x,\xi)$ which are Borel measurable in $x\in \mathbb{R}^n,$ supported  in the cone bundle
\begin{equation}
    \mathscr{C}=\{(x,\xi)\in T^{*}\mathbb{R}^n:\xi_{n}\gg 1,\,|\overline{\xi}|\leq \lambda\},
\end{equation} and satisfying the H\"ormander type estimates 
\begin{equation}\label{rho:estimates}
    |\partial_\xi^\alpha a(x,\xi)|\leq C_\alpha|\xi|^{m-\rho|\alpha|},\,\,(x,\xi)\in \textnormal{supp}(a).
\end{equation}
\begin{lemma}\label{Lemma:Order:m}   Let $k\gg 1,$ $C>0$ and let $\varepsilon>0.$ Let $a\in S^{m}_{\rho,\mathscr{C}}.$ Consider the symbol 
\begin{equation}
    a_{k}(x,\xi)= a(x,\xi)\phi_{-\varepsilon k}(J_{\tau}(x,\omega))\eta_{k}(\xi),
\end{equation}  supported in the  region
$$\mathscr{C}_\lambda=\{(x,\xi)\in \textnormal{supp}(a): \lambda\sim 2^k; \,|J_{\tau}(x,\xi)|\lesssim 2^{-\varepsilon k}\}.$$ Then, we have the following symbol estimates
\begin{equation}\label{symbol:radial:angular:type:order:m}
    |\partial_\omega^\beta \partial_{\lambda}^\gamma a_{k}(x,\xi) |\lesssim_{\beta,\gamma,C} 2^{mk}2^{-\rho|\gamma|k+(1-\rho)|\beta|k} 2^{C\varepsilon k |\beta|},
\end{equation} for $(x,\xi)$ in the support of $a:=a(x,\xi).$
\end{lemma}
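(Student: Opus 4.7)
The plan is to factor $a_k(x,\xi) = a(x,\xi)\,\phi_{-\varepsilon k}(J_\tau(x,\omega))\,\eta_k(\xi)$, apply the Leibniz rule, and bound each factor separately in the projective coordinates $(\omega,\lambda)$. I would use Lemma \ref{Lemma:1:1} to translate the Cartesian symbol estimates \eqref{rho:estimates} into the projective ones. The first observation to record is that on the support $\mathscr{C}_\lambda$ one has $\lambda\sim 2^k$ and $|\omega|=|\overline{\xi}|/\lambda\leq 1$, so $\omega$ varies in a bounded set, and hence the smooth function $J_\tau(x,\omega)$ together with all of its $\omega$-derivatives is uniformly bounded on the support.

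Combining \eqref{lemma:1}, \eqref{lemma:2}, the product rule, and the symbol inequalities \eqref{rho:estimates}, I would derive for the first factor the bound
\begin{equation*}
|\partial_\omega^{\beta_1}\partial_\lambda^{\gamma_1} a(x,\xi)| \lesssim 2^{mk-\rho|\gamma_1|k+(1-\rho)|\beta_1|k},
\end{equation*}
where the gain $(1-\rho)|\beta_1|k$ arises from balancing the factor $\lambda^{|\beta_1|}\sim 2^{|\beta_1|k}$ produced by \eqref{lemma:2} against the $2^{-\rho|\beta_1|k}$ coming from the $\overline{\xi}$-derivatives of $a$. The same reasoning applied to $\eta_k$, whose Cartesian derivatives satisfy $|\partial_\xi^\alpha\eta_k|\lesssim 2^{-|\alpha|k}$, would yield
\begin{equation*}
|\partial_\omega^{\beta_3}\partial_\lambda^{\gamma_3}\eta_k(\xi)| \lesssim 2^{-|\gamma_3|k}.
\end{equation*}

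For the cutoff factor, since $\phi_{-\varepsilon k}(J_\tau(x,\omega))$ does not depend on $\lambda$, only $\omega$-derivatives contribute. The scaling $\phi_{-\varepsilon k}(t)=\phi(2^{\varepsilon k}t)$ implies that each derivative of $\phi_{-\varepsilon k}$ costs a factor $2^{\varepsilon k}$, and since the $\omega$-derivatives of $J_\tau$ are uniformly bounded on the support, the multivariate chain rule (Fa\`a di Bruno) will give
\begin{equation*}
|\partial_\omega^{\beta_2}[\phi_{-\varepsilon k}(J_\tau(x,\omega))]| \lesssim 2^{C\varepsilon k|\beta_2|},
\end{equation*}
after absorbing the combinatorial constants and the uniform bounds on the derivatives of $J_\tau$ into $C$.

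Finally, I would combine the three estimates through the Leibniz expansion over $\beta_1+\beta_2+\beta_3=\beta$ and $\gamma_1+\gamma_3=\gamma$, and use $\rho\leq 1$ (so that $-\rho|\gamma_1|-|\gamma_3|\leq -\rho|\gamma|$) together with $(1-\rho)|\beta_1|+C\varepsilon|\beta_2|\leq ((1-\rho)+C\varepsilon)|\beta|$, to arrive at \eqref{symbol:radial:angular:type:order:m} after enlarging $C$ if necessary. The main obstacle will be the bookkeeping in the chain rule applied to $\phi_{-\varepsilon k}\circ J_\tau$: each of the $|\beta_2|$ derivatives contributes a factor $2^{\varepsilon k}$, and one must verify that the $\omega$-derivatives of $J_\tau$ stay bounded. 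Once this is established, the resulting exponent is linear in $|\beta|$, matching exactly the $2^{C\varepsilon k|\beta|}$ tolerance in the statement.
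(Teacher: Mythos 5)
Your proposal matches the paper's proof: both apply the Leibniz rule to split $a_k$ into the three factors $a$, $\phi_{-\varepsilon k}(J_\tau)$, and $\eta_k$, invoke Lemma \ref{Lemma:1:1} to pass between Cartesian and projective derivatives, and exploit the rescaling $\phi_{-\varepsilon k}(t)=\phi(2^{\varepsilon k}t)$ together with boundedness of the $\omega$-derivatives of $J_\tau$ on the compact support to obtain the $2^{C\varepsilon k|\beta|}$ loss. Your bookkeeping is slightly cleaner in that you distribute the $\lambda$-derivatives explicitly over $a$ and $\eta_k$ via $\gamma_1+\gamma_3=\gamma$, whereas the paper re-runs the radial estimate on $\partial_\omega^{\beta_1}a$, but the substance is identical.
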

\begin{proof} Using that
 $a$ satisfies the estimates in \eqref{rho:estimates} and the identity \eqref{lemma:1}  we have that
\begin{align*}
 \forall \ell\geq 1,\,\,   |\partial_\lambda^\ell a|\lesssim \sup_{|J|\leq \ell}|\partial_{\overline{\xi}}^{J}\partial_{\xi_n}^{\ell-|J|}a|\lesssim |\xi|^{m-\rho \ell}.
\end{align*}Observing that, on the region $\mathscr{C}_\lambda,$ $|\xi|\sim |\lambda|\sim 2^{k}$ we have proved that
\begin{equation}
    |\partial_\lambda^\ell a|\lesssim 2^{k(m-\rho \ell) }.
\end{equation}
By the Leibniz rule we have that
\begin{align}\label{Leibniz}
    \partial_\omega^\beta a_k=\sum_{\beta_1+\beta_2+\beta_3=\beta}C_{\beta_1,\beta_2,\beta_2}  \partial_\omega^{\beta_1} a\times \partial_\omega^{\beta_2}(\phi_{-\varepsilon k}(J_{\tau}(x,\omega)))\times \partial_\omega^{\beta_3} \eta_k.
\end{align}In view of \eqref{lemma:2},
we can estimate
\begin{equation}
    |\partial_\omega^{\beta_1} a|=|\partial_{\overline{\xi}}^{\beta_1} a||\lambda|^{|\beta_1|}\lesssim |\xi|^{m-\rho|\beta_1|}|\lambda|^{|\beta_1|}\sim  |\lambda|^{m-\rho|\beta_1|}|\lambda|^{|\beta_1|}\sim 2^{km}2^{k(1-\rho)|\beta_1|}
\end{equation}Since the sequence $\eta_k$ belongs uniformly  in $k$ to $S^0,$ we have that 
\begin{equation}
    |\partial_\omega^{\beta_3} \eta_k|=|\partial_{\overline{\xi}}^{\beta_3} \eta_k||\lambda|^{|\beta_3|}\lesssim |\xi|^{-|\beta_3|}|\lambda|^{|\beta_3|}\sim 1.
\end{equation} 

Now, let us analyse the behaviour of the cutt-off $\psi_{-\varepsilon k}$ at the ``curvature'' term $J_\tau(x,\omega).$ Observe that for any $j=1,\cdots, n-1,$
\begin{align*}
\partial_{\omega_j}(\phi_{-\varepsilon k}(J_\tau(x,\omega)))&=\phi_{-\varepsilon k}'(J_\tau(x,\omega)))\times \partial_{\omega_j}J_\tau(x,\omega).
\end{align*} Then, we have the equalities
\begin{align*}
    |\partial_{\omega_j}(\phi_{-\varepsilon k}(J_{\tau}(x,\omega)))| &=|\phi'_{-\varepsilon k}(t)|_{t=J_\tau(x,\omega)}\times \partial_{\omega_j}J_\tau(x,\omega)|\\
    &=|\partial_{t}[\phi(t/2^{-\varepsilon k})]|_{t=J(x,\omega)}\times \partial_{\omega_j}J(x,\omega)|\\
    &=|(\partial_{t}\phi)(t/2^{-\varepsilon k})2^{k\varepsilon}|_{t=J_\tau(x,\omega)}\times \partial_{\omega_j}J_\tau(x,\omega)|.    
\end{align*}There is $C_\phi>0$ such that
\begin{equation*}
    |\partial_{t}\phi(\lambda)|\leq C_{\phi}.
\end{equation*}Consequently, 
\begin{align*}
    |(\partial_{t}\phi)(t/2^{-\varepsilon k})|_{t=J_\tau(x,\omega)}2^{k\varepsilon}\lesssim  2^{k\varepsilon }.
\end{align*}Thus, the mathematical induction gives the estimate
\begin{equation}
    |\partial_\omega^{\beta_2}(\phi_{-\varepsilon k}(J_{\tau}(x,\omega)))|\lesssim 2^{k\varepsilon|\beta_2|}.
\end{equation}We have proved the estimates
$$  |\partial_\lambda^\gamma a|\lesssim 2^{km}2^{-k\gamma} ,\,  |\partial_\omega^{\beta_1} a|\lesssim 2^{km}2^{k(1-\rho)|\beta_1|},$$
and 
\begin{equation}\label{deribatives:J:lemma}
    |\partial_\omega^{\beta_3} \eta_k|\lesssim 1,\, |\partial_\omega^{\beta_2}(\phi_{-\varepsilon k}(J_{\tau}(x,\omega)))|\lesssim 2^{k\varepsilon|\beta|}.
\end{equation}
Note that when $\gamma=\beta_1=0,$ we recover the information that $a$ has order $m.$ Indeed,  let $a_{\beta_1}:= \partial_\omega^{\beta_1} a.$ Since $\lambda\sim 2^{k},$ note that the order of $a_{\beta_1}$ with respect to $\lambda$ is $m_{\beta_1}:=m+(1-\rho)|\beta_1|.$ If we replace the analysis above with the symbol  $a_{\beta_1}$ instead of $a,$  then $m_{\beta_1}$ takes the place of $m,$  and we get the estimate
$$  |\partial_\lambda^\gamma a_{\beta_1}|\lesssim 2^{km_{\beta_1}}2^{-k\gamma} =2^{km+k(1-\rho)|\beta_1|-k\gamma}. $$
Therefore we have proved that 
$$  |\partial_\lambda^\gamma \partial_\omega^{\beta_1} a|\lesssim 2^{km_{\beta_1}}2^{-k\gamma} =2^{km+k(1-\rho)|\beta_1|-k\gamma}. $$
The estimates for the derivatives of $a$ in $\lambda$ and in $\omega$ above, the inequality in \eqref{deribatives:J:lemma} and the Leibniz rule \eqref{Leibniz} conclude the proof of \eqref{symbol:radial:angular:type:order:m}.
\end{proof}

\subsection{Boundedness of the degenerate component}\label{Boundedness of the degenerate component}
In this section we prove that the  operator 

\begin{equation}\label{deg:section:l1}
    T_{\textnormal{deg}} f(x)= \sum_{k\gg 1}\smallint\limits_{\mathbb{R}^n}e^{2\pi i\Phi_\tau(x,\xi)}a(x,\xi)e^{-(1+i\tau)\textnormal{Im}(\Phi(x,\xi))}\phi_{-\varepsilon k}(J_{\tau}(x,\omega))\eta_{k}(\xi)\widehat{f}(\xi)d\xi,
\end{equation} is bounded on $L^1(\mathbb{R}^n).$ 
Before continuing with the proof of the $L^1$-boundedness of $T_{\textnormal{deg}} ,$ we discuss the second dyadic partition that will be employed in our further analysis.
\begin{remark}[About discrete vs continuous `second' dyadic decompositions]
In \cite{SSS}, a discrete `second dyadic decomposition' was introduced where the variable $\omega$ is decomposed in a finite family of disks $\{D\}_{D\in \mathcal{I}},$ see Figure \ref{Fig1}. At first glance,  for the analysis of the degenerate component  $T_{\textnormal{deg}},$ one could apply this discrete `second dyadic partition' on the symbol:
$$  a_{k}(x,\xi)= a(x,\xi)\phi_{-\varepsilon k}(J_{\tau}(x,\omega))\eta_{k}(\xi),$$ generating the new family of symbols $a_{k,D},$ in such a way that $a_k=\sum_{D}a_{k,D}.$ However, as it was explained in \cite{Tao}, this discrete decomposition provides the estimate
\begin{equation}\label{Remark:D}
    \left\Vert \smallint\limits_{\mathbb{R}^n}e^{2\pi i\Phi_\tau(x,\xi)}a_k(x,\xi)e^{-2\pi i y\cdot \xi}   d\xi \right\Vert_{L^1(\mathbb{R}^n_x)}\lesssim 1,
\end{equation} for the  $L^1$-norm of the kernel associated with each dyadic part $a_k.$ This estimate is not enough to deduce the $L^1$ boundeness of $T_{\textnormal{deg}}.$ The reason is that one needs a more accurate decay of this $L^1$-norm. Indeed, this is proved in Lemma \ref{Fundamental:deg:lemma} improving the $O(1)$ term in  \eqref{Remark:D}, by the smaller one $2^{-\varepsilon k}.$ In order to make this improvement, we will use the degeneracy condition $|J(x,\omega)|\lesssim 2^{-\varepsilon k}.$ To have the control of lower terms in the Taylor expansion of the phase $\Phi_\tau-y\cdot \xi$ at $\omega_0=\omega_D,$ especially if $\omega-\omega_D$ is in the direction where $\nabla_\omega^2\Phi_\tau$ degenerates, the disks $D$ will be replaced by a suitable family of ellipsoids, that allows us to decompose the $(x,\xi)$-variable in a family of tubular regions where we gain the decay $2^{-\varepsilon k}.$ 

\begin{figure}[h]
\includegraphics[width=8cm]{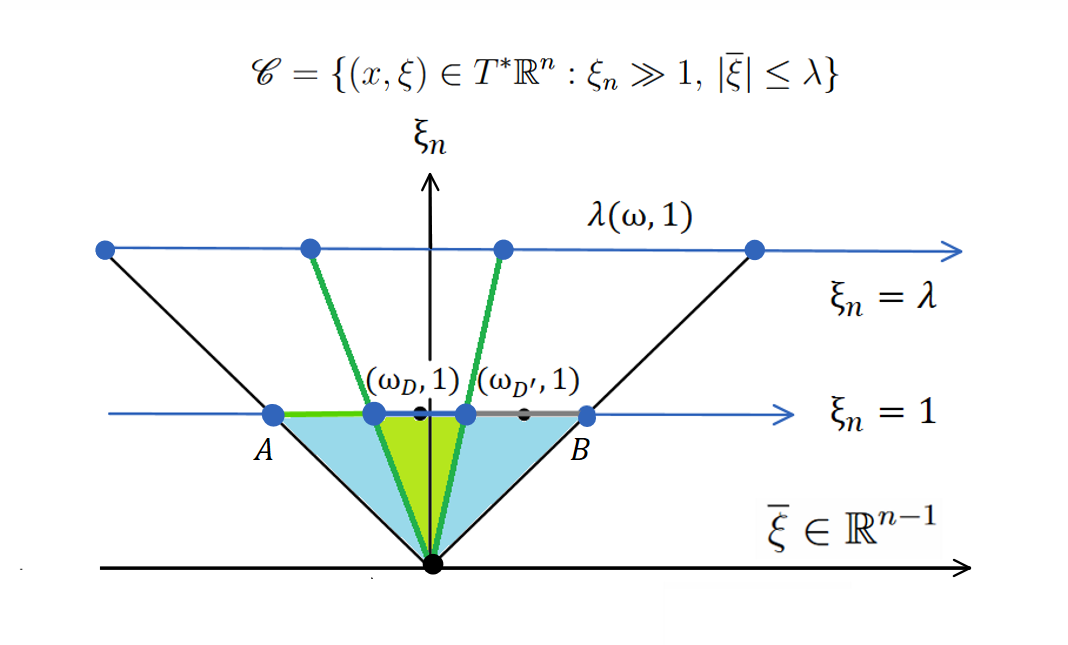}\includegraphics[width=8cm]{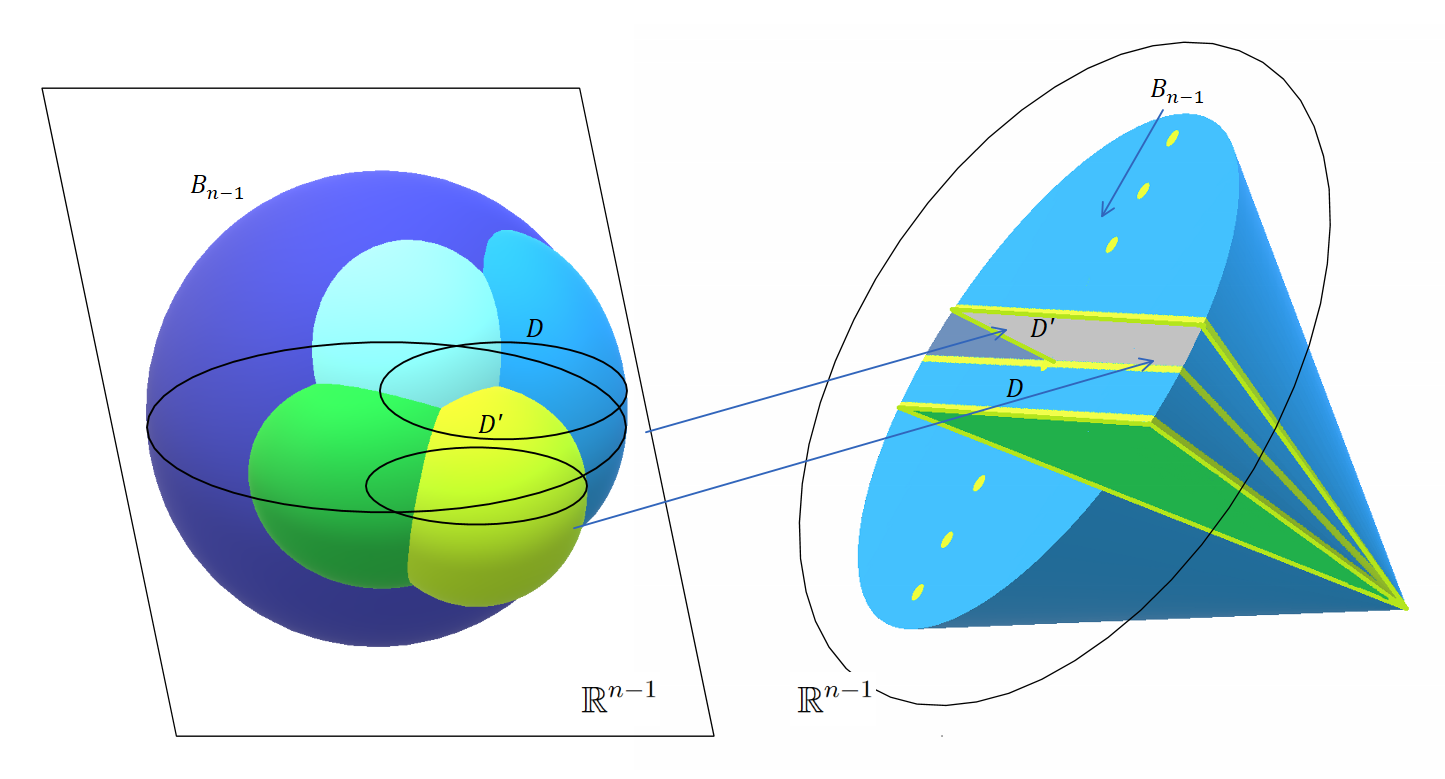}\\\includegraphics[width=11cm]{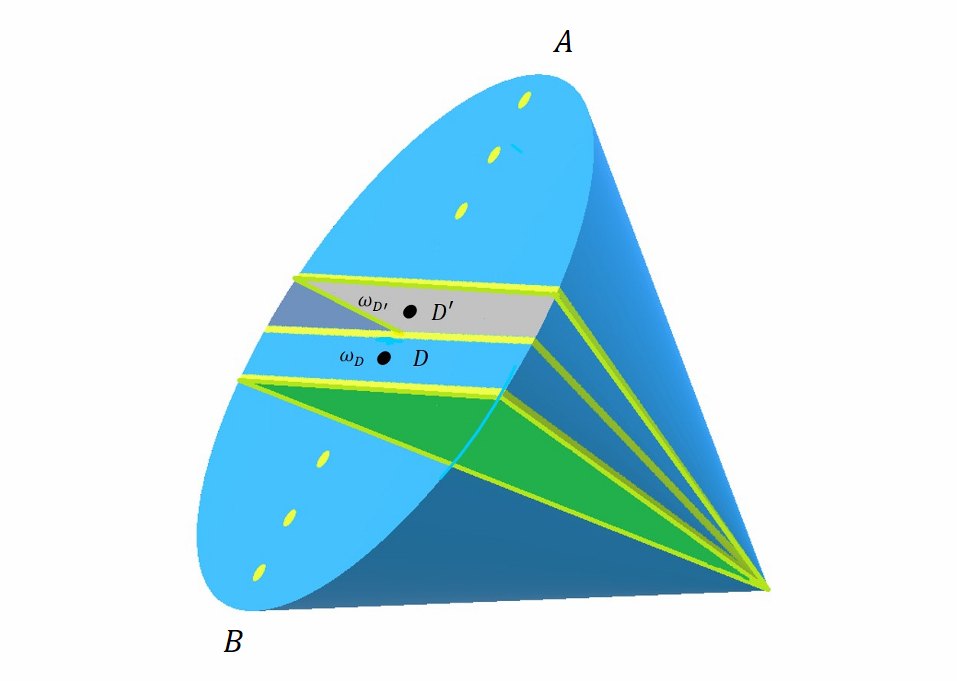}
\caption{ Here we illustrate the partition of the $\omega$ variable smoothly into about $2^{(n-1)k/2}$ disks $D$ is radius $2^{-k/2}$ when $|\omega|\lesssim 1.$  First, in the system of coordinates $(\overline{\xi},\xi_n),$ the projective coordinates $(\lambda,\omega)$ determine each point $(\overline{\xi},\xi_n)\in \mathscr{C}.$ The unit ball $B_{n-1}=\{|\omega|\leq 1\}$ on $\mathbb{R}^{n-1}$ is saturated to the `segment' $\overline{AB}.$ Each partition of the ball $B_{n-1}$ in a family of disks $\{D\}_{D\in \mathcal{I}}$ is saturated to a partition of the segment $\overline{AB}.$ In particular if the radii of the disks are proportional to $r=2^{-k/2},$ one can estimate $|\mathcal{I}|\sim 2^{(n-1)k/2}.$ }
 \label{Fig1}
\centering
\end{figure}
As it was pointed out in \cite{Tao}, there is an apparent difficulty in the case when $n\geq 3,$ where the ellipsoid around $\omega_0=\omega_D,$ depends on the spectrum of $\nabla_\omega^2\Phi_\tau(x,\omega_D).$ Then, the eccentricity and the orientation of this ellipsoid vary in function of $\omega_D.$ However, by adapting to the setting of the complex phases, the approach in \cite{Tao}, we will define a continuous `second dyadic partition' in $\omega,$ using a family of functions  $\psi_{x,\omega_D}$ with a family of corresponding supports determined by a  family of ellipsoids with a smooth variation of eccentricity and orientation, centred at the points $\omega_D$  that avoid Kakeya-type covering lemmas. According to this partition the part of the support of each symbol $a_{k,D}$ on the hyperplane $\xi_n=\lambda$ is illustrated in Figure \ref{Fig2}.
\begin{figure}[h]
\includegraphics[width=11cm]{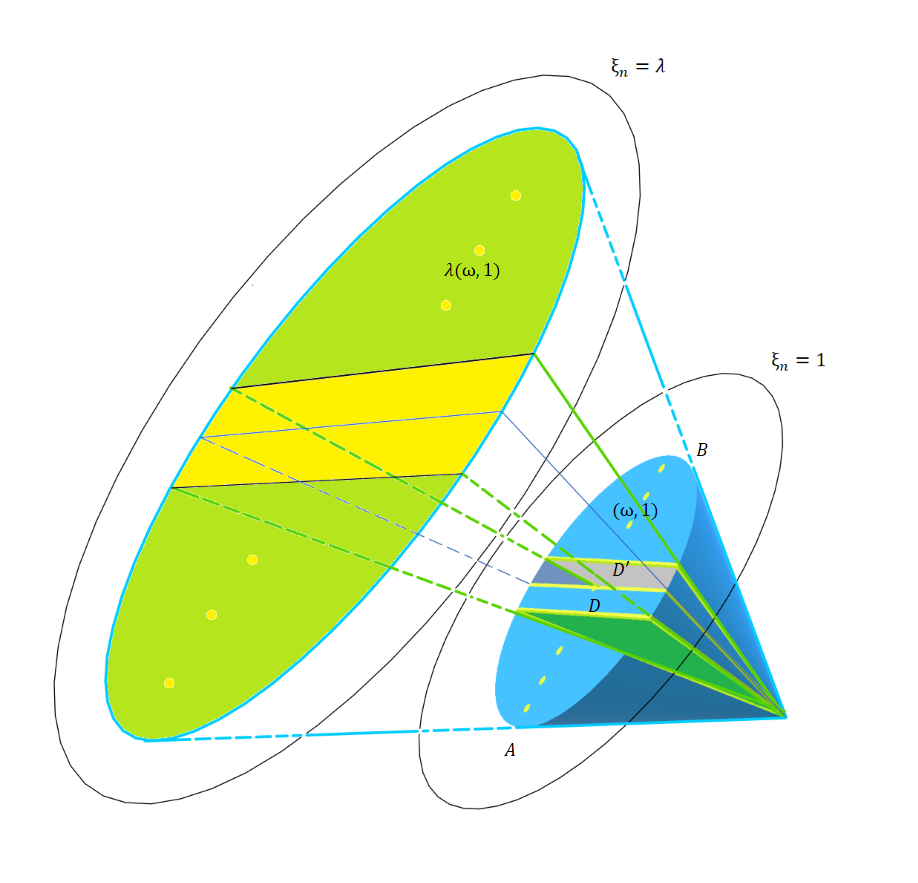}
\caption{Note that the dyadic partition of the variable $\xi_n=\lambda$ together with the family of `ellipsoids' $\{D\}_{D\in \mathcal{I}}$ decomposing the variable $\omega,$ provide a partition of the cone bundle $\mathscr{C}.$}
 \label{Fig2}
\centering
\end{figure} 
\end{remark}
The following is the main result of this subsection.
\begin{proposition}\label{Proposition:deg} Let $S^{-(n-1)/2}_{\rho,\mathscr{C}}$ be the symbol of the Fourier integral operator $T$ of order $m=-(n-1)/2$ and let $\Phi$ be its complex phase function.  Then, the degenerate component $T_{\textnormal{deg}} $ in \eqref{deg:section:l1} extends to a bounded operator from $L^1(\mathbb{R}^n)$ into $L^1(\mathbb{R}^n).$    
\end{proposition}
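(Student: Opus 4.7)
The plan is to reduce Proposition~\ref{Proposition:deg} to a summable estimate on each dyadic piece of the kernel of $T_{\textnormal{deg}}$. Writing
\[
T_{\textnormal{deg}}f(x)=\sum_{k\gg1}\int K_{k}(x,y)f(y)\,dy,\qquad K_{k}(x,y)=\int_{\mathbb{R}^{n}}e^{2\pi i(\Phi_{\tau}(x,\xi)-y\cdot\xi)}\,b_{k}(x,\xi)\,d\xi,
\]
with amplitude $b_{k}:=a(x,\xi)\,e^{-(1+i\tau)\textnormal{Im}\,\Phi(x,\xi)}\,\phi_{-\varepsilon k}(J_{\tau}(x,\omega))\,\eta_{k}(\xi)$, the $L^{1}\to L^{1}$ bound will follow from Minkowski's inequality once one shows that $\sup_{y}\|K_{k}(\cdot,y)\|_{L^{1}(\mathbb{R}^{n}_{x})}\lesssim 2^{-\varepsilon k}$, uniformly in $y$ and $k$, so that the geometric series converges.

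To establish this, I would follow the continuous second dyadic decomposition in the angular variable $\omega$ outlined in the remark preceding the proposition: a family of smooth bumps $\psi_{x,\omega_{0}}$ supported on ellipsoids centred at $\omega_{0}$ whose eccentricity and orientation are determined by the spectrum of $\nabla_{\omega}^{2}\Phi_{\tau}(x,\omega_{0})$, with axis lengths $\sim 2^{-k/2}$ in the non-degenerate directions and $\sim 2^{-(1-\varepsilon)k/2}$ in the direction of the small eigenvalue forced by $|J_{\tau}|\lesssim 2^{-\varepsilon k}$, and normalised so that $\int\psi_{x,\omega_{0}}(\omega)\,d\omega_{0}\equiv1$ on the $\omega$-support of $b_{k}$. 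Splitting $K_{k}=\int K_{k,\omega_{0}}\,d\omega_{0}$ accordingly, the proof then reduces to the single-ellipsoid estimate
\[
\|K_{k,\omega_{0}}(\cdot,y)\|_{L^{1}(\mathbb{R}^{n}_{x})}\lesssim 2^{-\varepsilon k}\cdot\mathrm{vol}\bigl(\mathrm{supp}\,\psi_{x,\omega_{0}}\bigr),
\]
analogous to the lemma labelled \texttt{Fundamental:deg:lemma} in the remark, which integrated against $d\omega_{0}$ yields the required gain.

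The single-ellipsoid estimate itself I would prove by combining the symbol bounds of Lemma~\ref{Lemma:Order:m}, propagated to include the Melin--Sj\"ostrand factor $e^{-(1+i\tau)\textnormal{Im}\,\Phi}$, with repeated integration by parts against the phase $\Phi_{\tau}(x,\xi)-y\cdot\xi$. The non-stationary phase principle (Proposition~\ref{Non-stationary:phase}) applied in the radial variable $\lambda$ and in the angular directions transverse to the ellipsoid confines the kernel to a thin tube in $x$; a stationary-phase computation (Proposition~\ref{Stationar:phase}) in the non-degenerate block of $\nabla_{\omega}^{2}\Phi_{\tau}$ produces the main-term asymptotics; and the extra $2^{-\varepsilon k}$ is extracted from the degenerate direction by Taylor-expanding $\Phi_{\tau}$ around $\omega_{0}$ and directly exploiting the bound $|J_{\tau}|\lesssim 2^{-\varepsilon k}$, which forces the oscillation scale to be long enough that a crude volumetric estimate in that single direction gives the required gain.

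The main obstacle is that the complex factor $e^{-(1+i\tau)\textnormal{Im}\,\Phi}$ pushes the effective amplitude into the class $S^{-(n-1)/2}_{1/2,1/2}$ rather than the classical $S^{-(n-1)/2}$: each $\xi$-derivative costs $|\xi|^{1/2}$, and combined with the derivatives of $\phi_{-\varepsilon k}(J_{\tau})$ this inflates each $\omega$-derivative of the amplitude by $2^{C\varepsilon k}$, exactly as recorded in estimate \eqref{symbol:radial:angular:type:order:m}. Here strict positivity of $\textnormal{Im}\,\Phi$ on $|\xi|\sim 2^{k}$, secured by the reduction to Theorem~\ref{Main:theorem:2}, is essential: it keeps $e^{-\textnormal{Im}\,\Phi}$ and all its $\xi$-derivatives uniformly controlled in $k$, so no extra $k$-dependent loss enters beyond the $2^{C\varepsilon k}$ bookkeeping. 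Since the integration-by-parts gain in the non-degenerate directions can be iterated arbitrarily many times, choosing $\varepsilon>0$ sufficiently small absorbs all $2^{C\varepsilon k}$ losses into the $2^{-\varepsilon k}$ degeneracy gain; summing over $k\gg1$ then yields $\|T_{\textnormal{deg}}f\|_{L^{1}}\lesssim\|f\|_{L^{1}}$.
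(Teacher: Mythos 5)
Your proposal follows essentially the same route as the paper: reduce to $\sup_{y}\|K_{k}(\cdot,y)\|_{L^{1}}\lesssim 2^{-\varepsilon k}$, apply the continuous ellipsoidal second dyadic decomposition with the quadratic form $Q(x,\omega)=(2^{-2\varepsilon k}I+(\nabla_{\omega}^{2}\Phi_{\tau})^{2})^{1/2}$, and push the Melin--Sj\"ostrand factor $e^{-(1+i\tau)\textnormal{Im}\,\Phi}$ into the $S^{-(n-1)/2}_{1/2,1/2}$ class so that Lemma~\ref{Fundamental:deg:lemma} with $\rho=1/2$ applies, absorbing the $2^{C\varepsilon k}$ losses into the $2^{-\varepsilon k}$ degeneracy gain. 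The one internal detail that differs slightly is your appeal to stationary phase in the non-degenerate block: the paper's single-ellipsoid estimate actually only uses the \emph{non}-stationary phase bound on the rescaled phase $\beta^{-1}F(\zeta)$ together with a volume estimate $\mathrm{Vol}(D)\lesssim 2^{-\varepsilon k}\beta^{n+1}$ from $\det Q\lesssim 2^{-\varepsilon k}$, but this is a difference of execution, not of strategy.
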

\begin{remark}\label{remark:final:part:proof}
    For the proof of Proposition \ref{Proposition:deg}, 
it is suffices to prove the estimate
\begin{equation}\label{L1:B:Deg:Part}
    \exists C>0,\forall f\in C^\infty_0(\mathbb{R}^n),\, \Vert T_{\textnormal{deg}} f \Vert_{L^1(\mathbb{R}^n)}\leq C\Vert f\Vert_{L^1(\mathbb{R}^n)}.
\end{equation}Note that the kernel of $T_{\textnormal{deg}}$ is given by
\begin{equation*}
    K_{\textnormal{deg}}(x,y):=\sum_{k\gg 1}\smallint\limits_{\mathbb{R}^n}e^{2\pi i\Phi_\tau(x,\xi)}a(x,\xi)e^{-(1+i\tau)\textnormal{Im}(\Phi(x,\xi))}\phi_{-\varepsilon k}(J_{\tau}(x,\omega))\eta_{k}(\xi)e^{-2\pi i y\cdot \xi}d\xi.
\end{equation*}By the triangle inequality, for the proof of \eqref{L1:B:Deg:Part}, it suffices to prove that  
\begin{equation}\label{First:Reduction}
    \left\Vert \smallint\limits_{\mathbb{R}^n}e^{2\pi i\Phi_\tau(x,\xi)}a(x,\xi)e^{-(1+i\tau)\textnormal{Im}(\Phi(x,\xi))}\phi_{-\varepsilon k}(J_{\tau}(x,\omega))\eta_{k}(\xi)e^{-2\pi i y\cdot \xi}   d\xi \right\Vert_{L^1(\mathbb{R}^n_x)}\lesssim 2^{-\varepsilon k},
\end{equation} in view of Minkowski integral inequality. Indeed, from \eqref{First:Reduction} one deduces that
\begin{equation}\label{The:constant:C}
  C:=\sup_{y\in \mathbb{R}^n}\Vert  K_{\textnormal{deg}}(x,y) \Vert_{L^1(\mathbb{R}^n_x)}\lesssim\sum_{k\gg 1}2^{-\varepsilon k}<\infty.   
\end{equation}
Consequently,
$$
 \Vert T_{\textnormal{deg}} f \Vert_{L^1(\mathbb{R}^n)}=   \Vert \smallint K_{\textnormal{deg}}(x,y)f(y)dy\Vert_{L^1(\mathbb{R}^n_x)}=\smallint\left|\smallint K_{\textnormal{deg}}(x,y)f(y)dy\right|dx$$
    $$\leq \smallint\smallint |K_{\textnormal{deg}}(x,y)|dx|f(y)|dy\leq \sup_{y\in \mathbb{R}^n} \smallint |K_{\textnormal{deg}}(x,y)|dx\smallint|f(y)|dy $$
     $$ = \sup_{y\in \mathbb{R}^n} \smallint |K_{\textnormal{deg}}(x,y)|dx\|f\|_{L^1(\mathbb{R}^n)},$$ from where one deduces \eqref{L1:B:Deg:Part} with $C>0$ defined in \eqref{The:constant:C}.
\end{remark}
For the proof of Proposition \ref{Proposition:deg} we require some preliminary results. We start with the following estimates for radial and angular derivatives of symbols in the class $S^{-(n-1)/2}_{\rho,\mathscr{C}}$ and with an estimate of the $L^1$-norm of the kernel of $T_{\textnormal{deg}}.$ This is our first step in the proof of \eqref{First:Reduction}.

\begin{lemma}\label{Fundamental:deg:lemma} Let $k\gg 1,$ $\frac{1}{2}\leq \rho\leq 1,$ $C>0,$ and let $\varepsilon>0.$ Let $a\in S^{-(n-1)/2}_{\rho,\mathscr{C}}.$ Consider the symbol 
\begin{equation}\label{symbol:a:k}
    a_{k}(x,\xi)= a(x,\xi)\phi_{-\varepsilon k}(J_{\tau}(x,\omega))\eta_{k}(\xi).
\end{equation} Then, we have the following symbol estimates
\begin{equation}\label{symbol:radial:angular:type}
    |\partial_\omega^\beta \partial_{\lambda}^\gamma a_{k}(x,\xi) |\lesssim_{\beta,\gamma} 2^{-\frac{(n-1)k}{2}}2^{-\rho|\gamma|k+(1-\rho)|\beta|k} 2^{C\varepsilon k |\beta|},
\end{equation} for $(x,\xi)$ in the support of $a.$ Moreover, any function $a_k:=a_{k}(x,\xi)\in S^{-(n-1)/2}_{\rho,\mathscr{C}}$ supported in the region
\begin{equation}\label{region:support:a:k}
  \{(x,\xi)\in \textnormal{supp}(a): \lambda\sim 2^k; \,|J_{\tau}(x,\xi)|\lesssim 2^{-\varepsilon k}\}  
\end{equation}
satisfying the symbol estimates in \eqref{symbol:radial:angular:type} defines a Fourier integral operator whose  kernel satisfies the inequality
\begin{equation}\label{First:Reduction:lemma}
    \left\Vert \smallint\limits_{\mathbb{R}^n}e^{2\pi i\Phi_\tau(x,\xi)}a_k(x,\xi)e^{-2\pi i y\cdot \xi}  d\xi  \right\Vert_{L^1(\mathbb{R}^n_x)}\lesssim_\rho 2^{-\varepsilon k},
\end{equation} uniformly in $y\in \mathbb{R}^n.$
    
\end{lemma}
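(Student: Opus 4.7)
The first assertion \eqref{symbol:radial:angular:type} is an immediate consequence of Lemma \ref{Lemma:Order:m} applied with $m = -(n-1)/2$, so no further analysis is required for it. The substantive content of the lemma is the kernel bound \eqref{First:Reduction:lemma}, in which the crucial point is not the bound itself but the extra factor $2^{-\varepsilon k}$: this improves the baseline $O(1)$ estimate that one would already obtain from the Seeger-Sogge-Stein second dyadic decomposition based on the order $m = -(n-1)/2$ of the symbol, and it is the engine that makes the series over $k$ in the degenerate component summable, as recorded in Remark \ref{remark:final:part:proof}. The gain must be extracted from the localisation $|J_\tau(x,\omega)| \lesssim 2^{-\varepsilon k}$ carried by the support \eqref{region:support:a:k} of $a_k$.

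My plan for \eqref{First:Reduction:lemma} is to implement the continuous second dyadic partition discussed in the remark preceding the statement, adapted to the complex-phase setting. Passing to the projective coordinates $(\lambda,\omega)$ we have $d\xi = \lambda^{n-1}\,d\lambda\,d\omega$, and the homogeneity of $\Phi_\tau$ rewrites the phase in \eqref{First:Reduction:lemma} as $\lambda[\Phi_\tau(x,\omega) - \overline{y}\cdot\omega - y_n]$, where $\overline{y} = (y_1,\ldots,y_{n-1})$. For each fixed $x$ I would introduce a smooth partition of unity $\{\psi_{x,\omega_0}\}_{\omega_0 \in \mathcal{I}_x}$ in $\omega$ supported on ellipsoids $E_{x,\omega_0}$ whose semi-axes in the eigen-directions of $\nabla_\omega^2 \Phi_\tau(x,\omega_0)$ are of size $2^{-k/2}/\sqrt{|\mu_j(x,\omega_0)|}$, truncated above by $1$, where $\mu_j(x,\omega_0)$ is the $j$-th eigenvalue of that Hessian. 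This calibration keeps the quadratic Taylor remainder of $\Phi_\tau$ at $\omega_0$ of size $O(2^{-k})$ across $E_{x,\omega_0}$, which is precisely the critical scale of stationary phase for the $\lambda$-dilated phase.

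For each piece I would dichotomise according to whether the equation $\nabla_\omega[\Phi_\tau(x,\omega) - \overline{y}\cdot\omega] = 0$ admits a solution inside $E_{x,\omega_0}$ or not. In the stationary case, Proposition \ref{Stationar:phase} supplies the decay $\lambda^{-(n-1)/2}|\det\nabla_\omega^2\Phi_\tau|^{-1/2}$, which combined with \eqref{symbol:radial:angular:type}, the Jacobian $\lambda^{n-1}$, and the sum over $\omega_0 \in \mathcal{I}_x$ produces the baseline $O(1)$; in the non-stationary case, iterating Proposition \ref{Non-stationary:phase} along the direction of maximal $\omega$-gradient of the phase yields arbitrary polynomial decay in $\lambda\cdot\mathrm{dist}(\omega,\mathrm{crit})$, which is harmless after summation. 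The decisive $2^{-\varepsilon k}$ improvement then emerges from the constraint $|J_\tau| = |\prod_j \mu_j| \lesssim 2^{-\varepsilon k}$: it forces at least one eigenvalue to be $\lesssim 2^{-\varepsilon k/(n-1)}$, so either the cardinality $|\mathcal{I}_x|$ or the volume of admissible ellipsoidal tiles shrinks correspondingly, and this shortfall propagates to the final $L^1_x$-norm.

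The main obstacle, absent from Tao's real-phase treatment \cite{Tao}, is the presence of the complex factor $e^{-(1+i\tau)\textnormal{Im}\Phi(x,\xi)}$ inside $a_k$; a priori, its derivatives could degrade the decay produced by integration by parts. This is handled by exploiting the strict positivity assumption of Theorem \ref{Main:theorem:2}, which, together with the homogeneity of $\textnormal{Im}(\Phi)$, keeps the factor $e^{-\textnormal{Im}\Phi}$ bounded and ensures via Lemma \ref{Lemma:Order:m} that differentiating it only contributes $S^{0}_{\rho,\mathscr{C}}$-type factors that are already absorbed into the $2^{C\varepsilon k|\beta|}$ term on the right-hand side of \eqref{symbol:radial:angular:type}. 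Consequently the ellipsoidal partition performs exactly as in the real-phase setting, and the $2^{-\varepsilon k}$ gain survives, completing \eqref{First:Reduction:lemma}.
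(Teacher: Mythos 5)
Your identification of \eqref{symbol:radial:angular:type} as an immediate consequence of Lemma~\ref{Lemma:Order:m} with $m = -(n-1)/2$ is correct, and the high-level plan of a continuous ellipsoidal partition in $\omega$ is the right framework. But the mechanism you propose for extracting the crucial factor $2^{-\varepsilon k}$ contains a genuine gap. You suggest applying Proposition~\ref{Stationar:phase} on each tile, obtaining $\lambda^{-(n-1)/2}\lvert\det\nabla_\omega^2\Phi_\tau\rvert^{-1/2}$, and then recovering the gain because the constraint $\lvert J_\tau\rvert\lesssim 2^{-\varepsilon k}$ shrinks the tile count $\lvert\mathcal{I}_x\rvert$ or tile volume. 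That does not close: on the degenerate support \eqref{region:support:a:k} the Hessian determinant $\lvert\det\nabla_\omega^2\Phi_\tau\rvert$ is \emph{small}, so $\lvert\det\nabla_\omega^2\Phi_\tau\rvert^{-1/2}$ is \emph{large} (of order $2^{\varepsilon k/2}$ or worse), i.e.\ the stationary phase estimate \emph{deteriorates} there rather than improving, and the reduction in the number of tiles does not compensate in the right way. Indeed, the whole point of working in the degenerate region $\lvert J_\tau\rvert\lesssim 2^{-\varepsilon k}$ is that the standard stationary-phase expansion is not available with uniform constants, and no cardinality counting in $\omega$-space will repair this.

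The paper's proof instead replaces the Hessian by the regularised matrix $Q(x,\omega) = (2^{-2\varepsilon k}I + (\nabla_\omega^2\Phi_\tau)^2)^{1/2}$, so that $2^{-C\varepsilon k}\le\det Q\lesssim 2^{-\varepsilon k}+\lvert J_\tau\rvert$; it applies the \emph{non-stationary} phase Proposition~\ref{Non-stationary:phase} to the rescaled phase $\beta^{-1}F(\zeta)$ to bound the $\omega$-integral by $\beta^{-1}$; and it obtains the $2^{-\varepsilon k}$ gain from the $x$-space volume of the ``eccentric disc'' $D$ on which the phase fails to be rapidly oscillating, using $\mathrm{Vol}(D)\lesssim 2^{-\varepsilon k}\beta^{n+1}$. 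The gain is therefore purely geometric and lives in the $x$-variable, not in the $\omega$-partition, and stationary phase plays no role in this lemma. Finally, your concern about derivatives of $e^{-(1+i\tau)\mathrm{Im}\Phi}$ degrading the decay is misplaced at the level of this lemma: the exponential does not appear in the statement. It is absorbed earlier, when passing to the product symbol $\tilde a = a\,e^{-(1+i\tau)\mathrm{Im}\Phi}\in S^{-(n-1)/2}_{1/2,\mathscr{C}}$ in Proposition~\ref{Proposition:deg}, and the present lemma is formulated for general $\rho\in[\frac12,1]$ precisely so that it applies to $\tilde a$ with $\rho=1/2$ without any separate treatment of the exponential; the strict positivity hypothesis of Theorem~\ref{Main:theorem:2} is used only in the $L^1$-boundedness of the averaging operator $A$ in Lemma~\ref{boundedness:A}.
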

\begin{proof} The estimate in \eqref{symbol:radial:angular:type} for $a_k$ as defined in \eqref{symbol:a:k}  is a consequence of Lemma \ref{Lemma:Order:m} applied to the order $m=-(n-1)/2.$

Now, let us consider a symbol $a_{k}:=a_k(x,\xi)$ satisfying the estimates in \eqref{symbol:radial:angular:type} whose support is contained in \eqref{region:support:a:k}. For the proof of \eqref{First:Reduction:lemma} we need a preliminary construction. Indeed, let us define a positive-definite replacement $Q(x,\omega)$ of $\nabla_\omega^2\Phi_\tau(x,\omega).$ Note that  $\nabla_\omega^2\Phi_\tau(x,\omega)$   is the associated  matrix to the quadratic form
$$\zeta\mapsto \zeta^{T}\nabla_\omega^2\Phi_\tau(x,\omega)\zeta=:\nabla_\omega^2\Phi_\tau(x,\omega)(\zeta,\zeta).$$
 Let $$\lambda_{1}(x,\omega),\cdots, \lambda_{n-1}(x,\omega)$$ be the family of eigenvalues of $\nabla_\omega^2\Phi_\tau(x,\omega).$
In an appropriate bases $B$ of $\mathbb{R}^{n-1}$ we can write
\begin{align*}
    \nabla_\omega^2\Phi_\tau(x,\omega)(\zeta,\zeta)=\sum_{j=1}^{n-1}\lambda_{j}(x,\omega)|\zeta_j|^{2},\,\,\zeta\in \mathbb{R}^{n-1}.
\end{align*}
Define
\begin{equation}
    Q(x,\omega):=(2^{-2\varepsilon k}I+( \nabla_\omega^2\Phi_\tau(x,\omega))^2)^{\frac{1}{2}},
\end{equation}where $I=I_{n-1}$ is the identity matrix of size $n-1.$ 
In the same basis $B,$ we can write
\begin{align*}
    Q(x,\omega)=\textnormal{diag}[2^{-2\varepsilon k}+\lambda_j(x,\omega)^2]_{1\leq j\leq n-1}^{\frac{1}{2}}.
\end{align*}
Observe that
\begin{equation}
    \det(Q(x,\omega))=\prod_{j=1}^{n-1}[2^{-2\varepsilon k}+\lambda_j(x,\omega)^2]^{\frac{1}{2}}\geq 2^{-Ck\varepsilon}, \,C=n-1.
\end{equation}On the other hand
\begin{align*}
     \det(Q(x,\omega)) &=\prod_{j=1}^{n-1}[2^{-2\varepsilon k}+\lambda_j(x,\omega)^2]^{\frac{1}{2}}\leq \prod_{j=1}^{n-1}[2^{- \varepsilon k}+|\lambda_j(x,\omega)|]\\
     &=2^{-(n-1)\varepsilon k}+\prod_{j=1}^{n-1}|\lambda_j(x,\omega)|+O(2^{-\varepsilon k})\\
     &\lesssim 2^{-\varepsilon k}+|J_\tau(x,\omega)|.
\end{align*}The matrix $Q(x,\omega)$ has determinant satisfying the estimates
\begin{align}\label{lower:bounde:det:Q}
    2^{-Ck\varepsilon}\leq \det(Q(x,\omega))\lesssim  2^{-\varepsilon k}+|J_\tau(x,\omega)|.
\end{align}
For any $(x,\omega_D)\in \mathbb{R}^{n}\times\mathbb{R}^{n-1},$ with the property that when $\lambda\sim 2^{k},$ 
$$(x,\lambda(\omega_D,1))=(x,\overline{\xi}_D,\lambda)\in \mathscr{C}, \,\,\overline{\xi}_D:=\omega_D\lambda ,$$
 define the function $\psi_{x,\omega_D}$ by
\begin{align}\label{phi:x:omega:D}
    \psi_{x,\omega_D}(\omega):=\det(Q(x,\omega_D))^{\frac{1}{2}}{ 2^{\frac{(n-1)k}{2}}\psi_{-k}(Q(x,\omega_D)(\omega-\omega_D,\omega-\omega_D))   }.
\end{align} Since on $\mathscr{C},$ $|\overline{\xi}_D|\lesssim \lambda$ we necessarily have that $|\omega_D|\lesssim 1$ and so $\omega_D$ is contained in a compact subset $\Omega$ of $\mathbb{R}^{n-1}.$ 
Note that
\begin{align*}
 Q(x,\omega_D)(\omega-\omega_D,\omega-\omega_D)\leq \sup_{x\in \pi_{1}[\textnormal{supp}(a)],\,\omega'\in \Omega }\Vert Q(x,\omega')\Vert_{\textnormal{op}}\Vert \omega-\omega_D\Vert^{2}\lesssim_{\textnormal{supp}(a),\Omega}   \Vert \omega-\omega_D\Vert^{2}.
\end{align*}In consequence, if 
$$\Vert \omega-\omega_D\Vert\lesssim 2^{-(1-\frac{\varepsilon}{2})k},$$ then 
$$Q(x,\omega)(\omega-\omega_D,\omega-\omega_D)\leq 2^{-(2-\varepsilon)k}\lesssim 2^{-k},$$ and consequently $\textnormal{supp}(\psi_{x,\omega_D})$ contains a small disk $\{\omega=\omega_D+z:|z|\lesssim 2^{-(1-\frac{\varepsilon}{2})k} \},$ namely, we have the inclusion
\begin{align}\label{A1}
    \{\omega=\omega_D+z:|z|\lesssim 2^{-(1-\frac{\varepsilon}{2})k} \}\subset \textnormal{supp}(\psi_{x,\omega_D}).
\end{align}
 By writing $Q(x,\omega_D)$ in the basis $B,$ we have
\begin{align*}
    Q(x,\omega_D)(\omega-\omega_D,\omega-\omega_D)=\sum_{j=1}^{n-1}(2^{-2\varepsilon k}+\lambda_j(x,\omega_D)^{2})^{\frac{1}{2}}(\omega_j-\omega_{D,j})^{2}
\end{align*} from where one can deduce that $\psi_{x,\omega_D}(\omega)$ is supported in an ellipsoid centred at $\omega=\omega_D.$ 
Observe that, since 
\begin{align}\label{aux:disc:quadratic:form}
    Q(x,\omega_D)(\omega-\omega_D,\omega-\omega_D)\geq \sum_{j=1}^{n-1}2^{-\varepsilon k}(\omega_j-\omega_{D,j})^{2},
\end{align} the support of $\psi_{x,\omega_D}$ contains the disk $\{\omega=\omega_D+O(2^{- k/2})\}.$ Indeed, on the support of $\psi_{x,\omega_D},$  
 $Q(x,\omega_D)(\omega-\omega_D,\omega-\omega_D)\in \textnormal{supp}(\psi_{-k})$ which says that
$$ Q(x,\omega_D)(\omega-\omega_D,\omega-\omega_D)\lesssim 2^{-k}.$$ In view of \eqref{aux:disc:quadratic:form} the inequality
$\sum_{j=1}^{n-1}2^{-\varepsilon k}(\omega_j-\omega_{D,j})^{2}\lesssim  2^{-k} $
implies that 
$$\sum_{j=1}^{n-1}(\omega_j-\omega_{D,j})^{2}\lesssim  2^{-k} 2^{\varepsilon k}= 2^{-(1-\varepsilon)k}. $$ 
In consequence, 
\begin{align}\label{A2}
\textnormal{supp}(\psi_{x,\omega_D})\subset \{\omega=\omega_D+z:|z|\lesssim2^{-(1-\varepsilon)k}\}.
\end{align} In view of the inclusions \eqref{A1}   and \eqref{A2} for any $\omega\in \textnormal{supp}(\psi_{x,\omega_D})$ we have
\begin{align}
  2^{-(1-\frac{\varepsilon}{2})k }\lesssim  |\omega-\omega_D|\lesssim 2^{-(1-\varepsilon)k } .
\end{align} Now, define the averaged  function $\psi_{x}(\omega)$ by
\begin{equation}
    \psi_{x}(\omega)=\smallint_{\omega_D\in \mathbb{R}^{n-1}}\psi_{x,\omega_D}(\omega)d\omega_{D}.
\end{equation}

We review the properties of the averaged function $\psi_{x}$ in the following lemma.
\begin{lemma}\label{function:psi:x}
    The function $\psi_x$ is positive, and the following estimates are valid:
    \begin{itemize}
        \item $\psi_{x}(\omega)\sim 1$
        \item  $\forall\beta,\,\,|\partial^\beta_\omega \psi_{x}(\omega) |\lesssim_{\beta}2^{C\varepsilon k|\beta|}. $
    \end{itemize}
\end{lemma} \begin{proof}In a suitable basis we can write the quadratic form $Q(x,\omega_D)$ as follows
 \begin{align*}
    Q(x,\omega)(\zeta,\zeta)=\sum_{j=1}^{n-1}\lambda_{Q,j}^{\omega}|\zeta_j|^2,\,\lambda_{Q,j}^{\omega}:=(2^{-2\varepsilon k}+\lambda_j(x,\omega)^{2})^{\frac{1}{2}}.
\end{align*}
Observe that
\begin{align*}
    &\psi_{x}(\omega) \\
    &=\smallint_{\omega_D\in \mathbb{R}^{n-1}}\psi_{x,\omega_D}(\omega)d\omega_{D}=\smallint_{\textnormal{supp}(\psi_{x,\omega_D})}\psi_{x,\omega_D}(\omega)d\omega_{D}\\
    &\sim \smallint_{ \omega=\omega_D+z:  2^{-(1-\frac{\varepsilon}{2})k }\lesssim  |z|\lesssim 2^{-(1-\varepsilon)k }   }\psi_{x,\omega_D}(\omega)d\omega_{D}\\
    &= \smallint_{ \omega=\omega_D+z: 2^{-(1-\frac{\varepsilon}{2})k }\lesssim  |z|\lesssim 2^{-(1-\varepsilon)k }    }{ 2^{\frac{(n-1)k}{2}}\psi_{-k}(Q(x,\omega_D)(\omega-\omega_D,\omega-\omega_D))   }{\det(Q(x,\omega_D))^{\frac{1}{2}}}d\omega_{D}\\
    &= \smallint_{z:\, 2^{-(1-\frac{\varepsilon}{2})k }\lesssim  |z|\lesssim 2^{-(1-\varepsilon)k } }{ 2^{\frac{(n-1)k}{2}}\psi_{-k}(Q(x,\omega-z)(z,z))   }{\det(Q(x,\omega-z))^{\frac{1}{2}}}dz.
  \end{align*}
 By making Taylor expansion of the function $\psi_{-k}(Q(x,\omega-z')(z,z))|_{z'=z}$ at $z'=0$ we have that
 \begin{align*}
         \psi_{x}(\omega)   &= \smallint_{ z:\,2^{-(1-\frac{\varepsilon}{2})k }\lesssim  |z|\lesssim 2^{-(1-\varepsilon)k } }{ 2^{\frac{(n-1)k}{2}}\psi_{-k}(Q(x,\omega-z)(z,z))   }\det(Q(x,\omega-z))^{\frac{1}{2}}dz.\\            
         &=\smallint_{ z:\,2^{-(1-\frac{\varepsilon}{2})k }\lesssim  |z|\lesssim 2^{-(1-\varepsilon)k } }{ 2^{\frac{(n-1)k}{2}}\psi_{-k}( Q(x,\omega)(z,z))   }{\det(Q(x,\omega))^{\frac{1}{2}}}dz+O(2^{C\varepsilon k}2^{-\varepsilon k})\\
          &=\smallint_{ z:\,2^{-(1-\frac{\varepsilon}{2})k }\lesssim  |z|\lesssim 2^{-(1-\varepsilon)k } }
         { 2^{\frac{(n-1)k}{2}}\psi_{-k}\left(  \sum_{j=1}^{n-1}\lambda_{Q,j}^\omega|z_j|^2  \right)   }{\det(Q(x,\omega_D))^{\frac{1}{2}}}dz+O(2^{C\varepsilon k}2^{-\varepsilon k}).
\end{align*}The change of variables $z=(\lambda_{j,Q}^{-\frac{1}{2}}\zeta_j)_{1\leq j\leq n-1}$ has the new volume element
\begin{equation}
    dz=\prod_{j=1}^{n-1}(\lambda_{j,Q}^\omega)^{-\frac{1}{2}}\cdot d\zeta=\det(Q(x,\omega))^{-\frac{1}{2}}d\zeta
\end{equation} from where we have that
\begin{align*}
     \psi_{x}(\omega)   &\sim \smallint
         { 2^{\frac{(n-1)k}{2}}\psi_{-k} ((\zeta,\zeta))  }d\zeta=\smallint
         { 2^{\frac{(n-1)k}{2}}\psi(|\zeta|^{2}/2^{-k})  }d\zeta=\smallint\psi_0(|\overline{\xi}|^2)d\overline{\xi}\sim 1.
\end{align*}A similar analysis can be done for the proof of the estimate $|\partial^\beta_\omega \psi_{x}(\omega) |\lesssim_{\beta}2^{C\varepsilon k|\beta|}. $ On the other hand,  let $|\beta|\geq 1.$  Note that when applying derivatives $\partial_{\omega}^{\beta}$ to $\psi_x(\omega)$ the main term $\smallint\psi_0(|\overline{\xi}|^2)d\overline{\xi}$ vanishes  and the error term contribute in $O(2^{C\varepsilon k|\beta|}).$ 
\end{proof}

In order to prove \eqref{First:Reduction:lemma}
we use the continuous partition of unity taking averages with respect to ellipsoids centered at $\omega_D$ as follows (and using Fubini's theorem)
\begin{align*}
    \smallint\limits_{\mathbb{R}^n}e^{2\pi i(\Phi_\tau(x,\xi)-y\cdot \xi)}a_k(x,\xi)d\xi &= \smallint\limits_{\mathbb{R}^n}\smallint\limits_{\Omega\subset \mathbb{R}^{n-1} }e^{2\pi i(\Phi_\tau(x,\xi)-y\cdot \xi)} \frac{ a_k(x,\xi) \psi_{x,\omega_D}(\omega)  }{\psi_{x}(\omega)} d\omega_D d\xi\\
    &=\smallint\limits_{\Omega\subset \mathbb{R}^{n-1} } \smallint\limits_{\mathbb{R}^n} e^{2\pi i(\Phi_\tau(x,\xi)-y\cdot \xi)} \frac{ a_k(x,\xi) \psi_{x,\omega_D}(\omega)  }{\psi_{x}(\omega)}  d\xi d\omega_D.
\end{align*}The variable $\omega_D$ runs over a compact set $\Omega,$ when $(x,\xi)\in \mathscr{C}$ as discussed above. Since $\lambda\sim 2^k,$ on the support of $a_{k},$ Figure \ref{Fig2}  illustrates the points of the support of $a_k$ on the hyperplane $\xi_n=\lambda.$

Hence, for the proof of \eqref{First:Reduction:lemma} is enough to show that 
 \begin{align}\label{pre:final:reduction}
     \sup_{\omega_D\in  \Omega}\left\Vert   \smallint\limits_{\mathbb{R}^n} e^{2\pi i(\Phi_\tau(x,\xi)-y\cdot \xi)} \frac{ a_k(x,\xi) \psi_{x,\omega_D}(\omega)  }{\psi_{x}(\omega)}  d\xi \right\Vert\lesssim 2^{-\varepsilon k}.
 \end{align}Indeed, observe that \eqref{pre:final:reduction} implies the estimate
 \begin{align*}
    &  \left\Vert \smallint\limits_{\mathbb{R}^n}e^{2\pi i\Phi_\tau(x,\xi)}a_k(x,\xi)e^{-2\pi i y\cdot \xi}    \right\Vert_{L^1(\mathbb{R}^n_x)}\\
    &= \left\Vert \smallint\limits_{\Omega } \smallint\limits_{\mathbb{R}^n} e^{2\pi i(\Phi_\tau(x,\xi)-y\cdot \xi)} \frac{ a_k(x,\xi) \psi_{x,\omega_D}(\omega)  }{\psi_{x}(\omega)}  d\xi d\omega_D    \right\Vert_{L^1(\mathbb{R}^n_x)}\\
    &\leq |\Omega| \sup_{\omega_D\in  \Omega}\left\Vert   \smallint\limits_{\mathbb{R}^n} e^{2\pi i(\Phi_\tau(x,\xi)-y\cdot \xi)} \frac{ a_k(x,\xi) \psi_{x,\omega_D}(\omega)  }{\psi_{x}(\omega)}  d\xi \right\Vert\\
    &\lesssim 2^{-\varepsilon k}.
 \end{align*}For the proof of \eqref{pre:final:reduction} we write the frequency variable $ \xi$ in terms of the co-variables $(\lambda,\omega),$ and of the new volume element
 \begin{align*}
     d\xi=d\overline{\xi}d\xi_n=\lambda^{n-1}d\omega d\lambda.
 \end{align*}Then, \eqref{pre:final:reduction} can be written as 
 \begin{align}\label{to:prove:lambda}
     \left\Vert   \smallint\limits_{\mathbb{R}^{n-1}}  \smallint\limits_{\mathbb{R}} e^{2\pi i\lambda(\Phi_\tau(x,\omega)-y\cdot (\omega,1) ) } \frac{ \lambda^{n-1}a_k(x,\lambda(\omega,1)) \psi_{x,\omega_D}(\omega)  }{\psi_{x}(\omega)}  d\lambda d\omega \right\Vert\lesssim 2^{-\varepsilon k}.
 \end{align}To simplify the previous estimate we will consider the following change of variables
 \begin{align*}
    T(\zeta)= \omega=\omega_D+2^{-k/2n}Q(x,\omega_D)^{-1/2}[(2^{ k}+2^{(1-\rho)100 k}+2^{\rho k})^{-C}\zeta_j] ,
 \end{align*} with a constant $C>1.$  Note that
 \begin{align}\label{determinant:Q}     |\textnormal{det}T(\zeta)|=2^{-(n-1)k/2}\det(Q(x,\omega_D))^{-1/2}(2^{ k}+2^{(1-\rho)100 k}+2^{\rho k})^{-C(n-1)}.
 \end{align} In what follows, let us denote
  $$\alpha:= (2^{ k}+2^{(1-\rho)100 k}+2^{\rho k})^{-C},$$
and 
$$\chi_{x,\omega_D}(\zeta):=  \psi(2^{k}Q(x,\omega_D)^{1/2}(\alpha \zeta,\alpha\zeta))(2^{ k}+2^{(1-\rho)100 k}+2^{\rho k})^{-C(n-1)}.$$ 
Then, in view of \eqref{phi:x:omega:D} and \eqref{determinant:Q} we have that
    \begin{align*}
    &  \smallint\limits_{\mathbb{R}^{n-1}}  \smallint\limits_{\mathbb{R}} e^{2\pi i\lambda(\Phi_\tau(x,\omega)-y\cdot (\omega,1) ) } \frac{ \lambda^{n-1}a_k(x,\lambda(\omega,1)) \psi_{x,\omega_D}(\omega)  }{\psi_{x}(\omega)}  d\lambda d\omega\\
     &= \smallint\limits_{\mathbb{R}^{n-1}}  \smallint\limits_{\mathbb{R}} e^{2\pi i\lambda(\Phi_\tau(x,T(\zeta))-y\cdot (T(\zeta),1) ) } \frac{ \lambda^{n-1}a_k(x,\lambda(T(\zeta),1)) \psi_{x,\omega_D}(T(\zeta))  }{\psi_{x}(T(\zeta))}  |\det{T(\zeta)}|d\lambda d\zeta\\
     &= \smallint\limits_{\mathbb{R}^{n-1}}  \smallint\limits_{\mathbb{R}} e^{2\pi i\lambda(\Phi_\tau(x,T(\zeta))-y\cdot (T(\zeta),1) ) } \frac{ \lambda^{n-1}a_k(x,\lambda(T(\zeta),1)) 2^{(n-1)k/2} }{\psi_{x}(T(\zeta))}\det(Q(x,\omega_D))^{\frac{1}{2}} \\
     &\times  \psi(2^{k}Q(x,\omega_D)^{1/2}(\alpha \zeta,\alpha\zeta)) 2^{-(n-1)k/2}\det(Q(x,\omega_D))^{-1/2}(2^{ k}+2^{(1-\rho)100 k}+2^{\rho k})^{-C(n-1)}d\lambda d\zeta\\
      & =\smallint\limits_{\mathbb{R}^{n-1}}  \smallint\limits_{\mathbb{R}} e^{2\pi i\lambda(\Phi_\tau(x,T(\zeta))-y\cdot (T(\zeta),1) ) } \frac{ \lambda^{n-1}a_k(x,\lambda(T(\zeta),1))  }{\psi_{x}(T(\zeta))} \\
     &\hspace{2cm} \times \psi(2^{k}Q(x,\omega_D)^{1/2}(\alpha \zeta,\alpha\zeta))(2^{ k}+2^{(1-\rho)100 k}+2^{\rho k})^{-C(n-1)}d\lambda d\zeta\\
      & =\smallint\limits_{\mathbb{R}^{n-1}}  \smallint\limits_{\mathbb{R}} e^{2\pi i\lambda(\Phi_\tau(x,T(\zeta))-y\cdot (T(\zeta),1) ) } \frac{ \lambda^{n-1}a_k(x,\lambda(T(\zeta),1)) \chi_{x,\omega_D}(\zeta)  }{\psi_{x}(T(\zeta))} d\lambda d\zeta.
 \end{align*}
 Then, by multiplying in both sides of \eqref{to:prove:lambda} by the factor $2^{-(n-1)k/2}$ we can re-write \eqref{to:prove:lambda} in terms of $\omega=T(\zeta)$ as follows
 \begin{align}\label{Final:reduction}
     \left\Vert   \smallint\limits_{\mathbb{R}^{n-1}}  \smallint\limits_{\mathbb{R}} e^{2\pi i\lambda(\Phi_\tau(x,T(\zeta))-y\cdot (T(\zeta),1) ) } b_{k,\omega_D,x}(\lambda,\zeta)  d\lambda d\zeta \right\Vert_{L^1(\mathbb{R}^n_x)}\lesssim 2^{-(n-1)k/2}2^{-\varepsilon k} 
 \end{align}
where
\begin{align*}
b_{k,\omega_D,x}(\lambda,\zeta):=    \frac{2^{-k(n-1)/2} \lambda^{n-1}a_k(x,\lambda(T(\zeta),1)) \chi_{x,\omega_D}(T(\zeta))  }{\psi_{x}(T(\zeta))}  .
\end{align*}So, to finish the proof we will show the estimate in \eqref{Final:reduction}. The advantage in proving \eqref{Final:reduction} is that the symbol $b_{k,\omega_D,x}(\lambda,\zeta)$ allows the use of the phase stationary theorem since it does satisfy the estimates
\begin{align}\label{the:symbol:b}
    |\partial_\lambda^{\beta}\partial_{\zeta}^{\delta}b_{k,\omega_D,x}(\lambda,\zeta)|\lesssim_{\delta} 2^{-k\rho|\beta|},
\end{align}
Note that, when $|\delta|=1,$ the chain rule gives
\begin{align*}
    |\partial_{\zeta}^\delta \chi_{x,\omega_D}(x,\omega_D)|&=|\partial_{\zeta}^\delta\left( \psi(2^{k}Q(x,\omega_D))^{1/2}(\alpha \zeta,\alpha\zeta)\right)|(2^{ k}+2^{(1-\rho)100 k}+2^{\rho k})^{-C(n-1)}\\
    &=|\partial_{\zeta}^\delta\left( \psi(2^{k}Q(x,\omega_D)^{1/2}(\alpha \zeta,\alpha \zeta))\right)|(2^{ k}+2^{(1-\rho)100 k}+2^{\rho k})^{-C(n-1)}\\
    &\lesssim \Vert(\partial^{\delta}\psi)\Vert_{L^\infty}|\partial_{\zeta}^\delta(\psi(2^{k}Q(x,\omega_D)^{1/2}(\alpha \zeta,\alpha \zeta))^{1/2})|\\
    &\lesssim 2^{k|\delta|}(2^{-k\varepsilon}+|J_\tau(x,\omega)|)^{1/2}\alpha^{|\delta|}\\
     &\lesssim 2^{k|\delta|}2^{-k\varepsilon}\alpha^{|\delta|}\\
     &\lesssim 2^{k|\delta|}2^{-k\varepsilon}(2^{ k}+2^{(1-\rho)100 k}+2^{\rho k})^{-C|\delta|}\\
     &\lesssim_\delta 1,
\end{align*} where we have used that on the support of $a_k,$ $|J_{\tau}(x,\xi)|\lesssim 2^{-\varepsilon k}, $ see \eqref{region:support:a:k}. Then, using mathematical induction we can prove the estimate
$$ |\partial_{\zeta}^\delta \chi_{x,\omega_D}(x,\omega_D)|\lesssim_{C,\delta} 1,$$ where the previous estimate depends on $C>\delta$ but it is independent of $\delta$ if $C>0$ is large enough and we take a finite number of derivatives in $\zeta$. This will be the case because we are going to apply the method of stationary phase just taking derivatives $\partial_\zeta^\delta$ in $\zeta$ with $|\delta|=1.$
Now, using that the derivatives of $\chi_{x,\omega_D}(x,\omega_D)$ are uniformly bounded, in order to prove that the derivatives in $\zeta$ satisfy \eqref{the:symbol:b} we proceed as follows for $|\delta|=1,$
\begin{align*}
    |\partial_\zeta^{\delta}b_{k,\omega_D,x}(\lambda,\zeta)|&\sim 2^{-k(n-1)/2}2^{k(n-1)}|\partial_\zeta^{\delta}\left(\frac{a_k(x,\lambda(T(\zeta),1)) \psi_{x,\omega_D}(T(\zeta))  }{\psi_{x}(T(\zeta))}\right)|\\
    &\lesssim 2^{k(n-1)/2}\sum_{\delta_1+\delta_2=\delta}|\partial_{\zeta}^{\delta_1}(a_k(x,\lambda(T(\zeta),1)) \psi_{x,\omega_D}(T(\zeta)))  |\cdot |\partial_{\zeta}^{\delta_2}(1/\psi_{x}(T(\zeta)))|\\
    &\lesssim 2^{k(n-1)/2}\sum_{\delta_1+\delta_2=\delta}|\partial_{\zeta}^{\delta_1}(a_k(x,\lambda(T(\zeta),1))  )  |\\
    &\hspace{4cm}\times \sum_{\delta_2^1+\cdots +\delta_2^k=\delta_2} \frac{ |\partial_{\zeta}^{\delta_2^{1}}\cdots \partial_{\zeta}^{\delta_2^{k}}\psi_{x}(T(\zeta))|    }{|\psi_{x}(T(\zeta)))|^{|\delta_2|+1}}\\
    &\lesssim 2^{k(n-1)/2}\sum_{\delta_1+\delta_2=\delta}|\partial_{\omega}^{\delta_1}(a_k(x,\lambda(T(\zeta),1))  )\lambda^{|\delta_1|}\partial_\zeta^{\delta_1} T(z)  |\\
    &\hspace{4cm}\times \sum_{\delta_2^1+\cdots +\delta_2^k=\delta_2}  | \partial_{\omega}^{(\delta_2^{1},\cdots,\delta_2^{k}) } \psi_{x}(T(\zeta)) \partial_{\zeta}^{ (\delta_2^{1},\cdots,\delta_2^{k}) }(T(\zeta))    |    
\end{align*}since $|\psi_{x}(T(\zeta)))|\sim 1.$ Note that we have the estimate
\begin{align*}
    |\partial_\zeta^\delta T(z)|&=|\partial_{\zeta}^\delta(\omega_D+2^{-k/2n}Q(x,\omega_D)^{-1/2}[(2^{ k}+2^{(1-\rho)100 k}+2^{\rho k})^{-C}\zeta_j])|\\
    &=|\partial_{\zeta}^{\delta}\left(2^{-k/2n}\sum_{j=1}^{n-1}(2^{-2\varepsilon k}+\lambda_j(x,\omega_D)^{2})^{-\frac{1}{4}}(2^{ k}+2^{(1-\rho)100 k}+2^{\rho k})^{-2C}|\zeta_j|^2\right)|\\
     &\lesssim 2^{-k/2n}2^{\varepsilon k/4}(2^{ k}+2^{(1-\rho)100 k}+2^{\rho k})^{-2C}.
\end{align*} Moreover, since $T(\zeta)$ is a polynomial of order $2,$ when $|\delta|\geq 3,$ $\partial_\zeta^\delta T(z)=0,$ and  we trivially have  the bound
$$  |\partial_\zeta^\delta T(z)|\leq C_{\delta } 2^{-k/2n} 2^{\varepsilon k/4}(2^{ k}+2^{(1-\rho)100 k}+2^{\rho k})^{-2C-|\delta|}.$$
In consequence, since $\lambda^{|\delta_1|}\sim 2^{k|\delta_1|},$ we can estimate
\begin{align*}
    &2^{k(n-1)/2}\sum_{\delta_1+\delta_2=\delta}|\partial_{\omega}^{\delta_1}(a_k(x,\lambda(T(\zeta),1))  )\lambda^{|\delta_1|}\partial_\zeta^{\delta_1} T(z)  |\\
    &\hspace{4cm}\times \sum_{\delta_2^1+\cdots +\delta_2^k=\delta_2}  | \partial_{\omega}^{(\delta_2^{1},\cdots,\delta_2^{k}) } \psi_{x}(T(\zeta)) \partial_{\zeta}^{ (\delta_2^{1},\cdots,\delta_2^{k}) }(T(\zeta))    | \\
    &\lesssim 2^{k(n-1)/2} \sum_{\delta_1+\delta_2=\delta} 2^{-\frac{(n-1)k}{2}}  2^{(1-\rho)|\delta_1|k}2^{\varepsilon k/4}(2^{ k}+2^{(1-\rho)100 k}+2^{\rho k})^{-2C-|\delta_1|}2^{k|\delta_1|}\\
    &\hspace{2cm} \times \sum_{\delta_2^1+\cdots +\delta_2^k=\delta_2}2^{-k/2n} 2^{C\varepsilon k|(\delta_2^{1},\cdots,\delta_2^{k})| }2^{\varepsilon k/4}(2^{ k}+2^{(1-\rho)100 k}+2^{\rho k})^{-2C-|(\delta_2^{1},\cdots,\delta_2^{k})|}\\
    &\lesssim \sum_{\delta_1+\delta_2=\delta} \sum_{\delta_2^1+\cdots +\delta_2^k=\delta_2}2^{-k/2n} 2^{(1-\rho)|\delta_1|k}   2^{\varepsilon k/4}  2^{-C k}2^{-50(1-\rho)|\delta_1|)}2^{C\varepsilon k|\delta_2|}2^{\varepsilon k/4}2^{-(C+|\delta_2|/2)k}\\
    &\lesssim 1.
\end{align*}On the other hand, to complete the proof of \eqref{the:symbol:b} we just have to use the estimate 
$$  | \partial_{\lambda}^\gamma a_{k}(x,\xi) |\lesssim_{\beta,\gamma} 2^{-\frac{(n-1)k}{2}}2^{-\rho|\gamma|k} .$$ Since $\lambda\sim 2^k,$  we can estimate 
\begin{align*} 
|\partial_{\lambda}^\beta b_{k,\omega_D,x}(\lambda,\zeta)| &=|\partial_{\lambda}^\beta \left(   \frac{2^{-k(n-1)/2} \lambda^{n-1}a_k(x,\lambda(T(\zeta),1)) \chi_{x,\omega_D}(T(\zeta))  }{\psi_{x}(T(\zeta))}\right)| \\
&\sim \left| \left(   \frac{2^{-k(n-1)/2} 2^{k(n-1)}\partial_{\lambda}^\beta (a_k(x,\lambda(T(\zeta),1))) \chi_{x,\omega_D}(T(\zeta))  }{\psi_{x}(T(\zeta))}\right)\right|\\
&\sim \left|   {2^{k(n-1)/2}\partial_{\lambda}^\beta (a_k(x,\lambda(T(\zeta),1))) \chi_{x,\omega_D}(T(\zeta))  }\right|\\
&\lesssim 2^{k(n-1)/2}  2^{-\frac{(n-1)k}{2}}2^{-\rho|\beta|k}\\
&=2^{-k\rho|\beta|},
\end{align*}as claimed. Indeed, note that the derivatives $\delta_\lambda^{\beta_{i}}(\lambda^{n-1})$ with $|\beta_i|\leq \beta,$ are bounded from above by $\lesssim 2^{k(n-1)},$ when $\lambda\sim 2^{k}.$
On the other hand, let us analyse the phase using its Taylor expansion as follows taking into account that $\omega=T(\zeta)$ 
\begin{align*}
   & \Phi_\tau(x,\omega)-y(\omega,1) \\
   &= \Phi_\tau(x,\omega_D)-y\cdot(\omega_D,1)\\
    &+\nabla_\omega(\Phi_\tau(x,\omega)-y(\omega,1))|_{\omega=\omega_D}\cdot (\omega-\omega_D)+e_{k,x,y,\omega_D}(\zeta)\\
    &= \Phi_\tau(x,\omega_D)-y\cdot(\omega_D,1)\\
    &+(\nabla_\omega\Phi_\tau(x,\omega_D)-\overline{y})\cdot (2^{-k/2n}Q(x,\omega_D)^{-1/2}(\alpha \zeta))+e_{k,x,y,\omega_D}(\zeta)\\
    &= \Phi_\tau(x,\omega_D)-y\cdot(\omega_D,1)\\
    &+(2^{ k}+2^{(1-\rho)100 k}+2^{\rho k})^{-C}2^{-k/2n}(\nabla_\omega\Phi_\tau(x,\omega_D)-\overline{y})\cdot Q(x,\omega_D)^{-1/2}(\zeta)\\
    &+e_{k,x,y,\omega_D}(\zeta)\\
    &= \Phi_\tau(x,\omega_D)-y\cdot(\omega_D,1)\\
    &+(2^{ k}+2^{(1-\rho)100 k}+2^{\rho k})^{-C}2^{-k/2n}Q(x,\omega_D)^{-1/2}(\nabla_\omega\Phi_\tau(x,\omega_D)-\overline{y})\cdot \zeta\\
    &+e_{k,x,y,\omega_D}(\zeta).
\end{align*}
Since the phase function
\begin{align*} F(\zeta) &= \Phi_\tau(x,\omega)-y(\omega,1)\\
 &= \Phi_\tau(x,\omega_D)-y\cdot(\omega_D,1)\\
    &+(2^{ k}+2^{(1-\rho)100 k}+2^{\rho k})^{-C}2^{-k/2n}Q(x,\omega_D)^{-1/2}(\nabla_\omega\Phi_\tau(x,\omega_D)-\overline{y})\cdot \zeta\\
    &+e_{k,x,y,\omega_D}(\zeta),
\end{align*} has  gradient 
\begin{align*}
    \nabla_{\zeta}F(\zeta)=\alpha 2^{-k/2n}Q(x,\omega_D)^{-1/2}(\nabla_\omega \Phi_\tau(x,\omega_D)-\overline{y})+O(2^{-k}),
\end{align*} we have that
\begin{align*}
    |\nabla_{\zeta}F(\zeta)| &= \alpha 2^{-k/2n}|\textnormal{diag}[(\lambda_{Q,i}^{\omega})^{-1/2}]_{1\leq i\leq n-1}(\nabla_\omega \Phi_\tau(x,\omega_D)-\overline{y})|+O(2^{-k})\\
    &= \alpha 2^{-k/2n}|[(2^{-\varepsilon k}I+\nabla_{\omega}^2\Phi_\tau(x,\omega)^2]^{-1/4}(\nabla_\omega \Phi_\tau(x,\omega_D)-\overline{y})|+O(2^{-k})\\
    &=O(\alpha 2^{-k/2n}2^{\varepsilon k/4})+O(2^{-k}).
\end{align*}Then, with $\beta=\alpha 2^{-k/2n}2^{\varepsilon k/4},$ one has the estimate
\begin{equation}\label{F:beta}
    |\nabla_{\zeta}\beta^{-1}F(\zeta)|=O(1) . 
\end{equation}
For our further analysis note that 
\begin{align*}
      \Phi_\tau(x,\omega)-y(\omega,1) = \Phi_\tau(x,\omega_D)-y\cdot(\omega_D,1)+Z(\zeta),\, |Z(\zeta)|=O(\beta). 
\end{align*}
So, in order to apply the non-stationary phase principle\footnote{See Proposition \ref{Non-stationary:phase}. We are going to apply this proposition to the phase $F_\beta:=\beta^{-1}F(\zeta),$ making use of the estimate \eqref{F:beta}.}
to the integral
$$ \smallint\limits_{\mathbb{R}^{n-1}}  \smallint\limits_{\mathbb{R}} e^{2\pi i\lambda(\Phi_\tau(x,T(\zeta))-y\cdot (T(\zeta),1) ) } b_{k,\omega_D,x}(\lambda,\zeta)  d\lambda d\zeta$$
we re-write it as follows
\begin{align*}
    &\left|  \smallint\limits_{\mathbb{R}^{n-1}}  \smallint\limits_{\mathbb{R}} e^{2\pi i\lambda(\Phi_\tau(x,T(\zeta))-y\cdot (T(\zeta),1) ) } b_{k,\omega_D,x}(\lambda,\zeta) d\lambda d\zeta  \right|\\
    &\leq   \smallint\limits_{\mathbb{R}}\left|  \smallint\limits_{\mathbb{R}^{n-1}}   e^{2\pi i\lambda(\Phi_\tau(x,T(\zeta))-y\cdot (T(\zeta),1) ) } b_{k,\omega_D,x}(\lambda,\zeta)   d\zeta  \right| d\lambda\\
    &\sim   \smallint\limits_{\lambda\sim 2^{k}}\left|  \smallint\limits_{\mathbb{R}^{n-1}}   e^{2\pi i\lambda F(\zeta) } b_{k,\omega_D,x}(\lambda,\zeta)   d\zeta  \right| d\lambda\\
    &\lesssim 2^{k}\sup_{\lambda\sim 2^k}  \left|  \smallint\limits_{\mathbb{R}^{n-1}}   e^{2\pi i\lambda F(\zeta) } b_{k,\omega_D,x}(\lambda,\zeta)   d\zeta  \right| d\lambda\\
    &= 2^{k}\sup_{\lambda\sim 2^k}  \left|  \smallint\limits_{\mathbb{R}^{n-1}}   e^{2\pi i\lambda\beta \beta^{-1}F(\zeta) } b_{k,\omega_D,x}(\lambda,\zeta)  d\zeta  \right| d\lambda\\
    &\lesssim 2^{k}\sup_{\lambda\sim 2^k} (\lambda\beta)^{-1}\sim \beta^{-1}.
\end{align*} Consequently,
\begin{align*}
    & \left\Vert   \smallint\limits_{\mathbb{R}^{n-1}}  \smallint\limits_{\mathbb{R}} e^{2\pi i\lambda(\Phi_\tau(x,T(\zeta))-y\cdot (T(\zeta),1) ) } b_{k,\omega_D,x}(\lambda,\zeta)  d\lambda d\zeta \right\Vert_{L^1(\mathbb{R}^n_x)}\\
    &=  \left\Vert   \smallint\limits_{\mathbb{R}^{n-1}}  \smallint\limits_{\mathbb{R}} e^{2\pi i\lambda(\Phi_\tau(x,T(\zeta))-y\cdot (T(\zeta),1) ) } b_{k,\omega_D,x}(\lambda,\zeta)  d\lambda d\zeta \right\Vert_{L^1(D)}\\
    &\leq \beta^{-1}\textnormal{Vol}(D),
 \end{align*} where the eccentric disc $D$ is given by
$$ D=\left\{x: \Phi_\tau(x,\omega_D)=y(\omega_D,1)+O(\beta);\nabla_\omega \Phi_\tau(x,\omega_D)=\overline{y}+Q(x,\omega_D)^{1/2}Z \right\},$$ with $|Z|=O(\beta ) .$
Taking into account that $\textnormal{det}(Q(x,\omega_D))\lesssim 2^{-\varepsilon k}$ one can estimate the volume of $D$ as follows, see \cite[Page 8]{Tao} 
\begin{equation*}
    \textnormal{Vol}(D)\lesssim 2^{-\varepsilon k} \beta^{n+1}.
\end{equation*}So, we can estimate
\begin{align*}
     &\left\Vert   \smallint\limits_{\mathbb{R}^{n-1}}  \smallint\limits_{\mathbb{R}} e^{2\pi i\lambda(\Phi_\tau(x,T(\zeta))-y\cdot (T(\zeta),1) ) } b_{k,\omega_D,x}(\lambda,\zeta)  d\lambda d\zeta \right\Vert_{L^1(\mathbb{R}^n_x)}\leq \beta^{-1}\textnormal{Vol}(D)\\
     &\lesssim 2^{-\varepsilon k}\beta^{n}\\
     & \lesssim 2^{-\varepsilon k} (2^{ k}+2^{(1-\rho)100 k}+2^{\rho k})^{-Cn}2^{-k/2}2^{\varepsilon n k/4}\\
&\leq   2^{-\varepsilon k} 2^{-k/2}(2^{-Cn k}2^{\varepsilon n k/4}).
 \end{align*} Taking $\varepsilon>0$ small enough and $C>0$ large enough 
we have the estimate $2^{-k/2}(2^{-Cn k}2^{\varepsilon n k/4})\leq 2^{-(n-1)k/2}.$ Therefore, we have that
$$\left\Vert   \smallint\limits_{\mathbb{R}^{n-1}}  \smallint\limits_{\mathbb{R}} e^{2\pi i\lambda(\Phi_\tau(x,T(\zeta))-y\cdot (T(\zeta),1) ) } b_{k,\omega_D,x}(\lambda,\zeta)  d\lambda d\zeta \right\Vert_{L^1(\mathbb{R}^n_x)}\lesssim 2^{-\varepsilon k} 2^{-(n-1)k/2}$$
as desired. We have proved \eqref{Final:reduction}. In consequence, the proof of Lemma \ref{Fundamental:deg:lemma} is complete. 
\end{proof}
 \begin{proof}[Proof of Proposition \ref{Proposition:deg}] Let us consider the   kernel
\begin{equation*}
    K_{\textnormal{deg},k}(x,y):=\smallint\limits_{\mathbb{R}^n}e^{2\pi i\Phi_\tau(x,\xi)}a(x,\xi)e^{-(1+i\tau)\textnormal{Im}(\Phi(x,\xi))}\phi_{-\varepsilon k}(J_{\tau}(x,\omega))\eta_{k}(\xi)e^{-2\pi i y\cdot \xi}d\xi.
\end{equation*}Since $a(x,\xi)\in S^{-(n-1)/2}_{1,\mathscr{C}},$ and $e^{-(1+i\tau)\textnormal{Im}(\Phi(x,\xi))}\in S^{0}_{1/2,1/2} ,$ the H\"ormander calculus implies that 
$$\tilde{a}(x,\xi):=a(x,\xi)e^{-(1+i\tau)\textnormal{Im}(\Phi(x,\xi))}\in S^{-(n-1)/2}_{1/2,\mathscr{C}} .$$
 By applying Lemma \ref{Fundamental:deg:lemma} to  $\tilde{a}(x,\xi)\in S^{-(n-1)/2}_{\rho,\mathscr{C}} ,$ with $\rho=1/2,$    the symbol 
\begin{equation}
   \tilde{ a}_{k}(x,\xi)= \tilde{a}(x,\xi)\phi_{-\varepsilon k}(J_{\tau}(x,\omega))\eta_{k}(\xi),
\end{equation}satisfies the inequalities
\begin{equation}
    |\partial_\omega^\beta \partial_{\lambda}^\gamma a_{k}(x,\xi) |\lesssim_{\beta,\gamma} 2^{-\frac{(n-1)k}{2}}2^{-\frac{1}{2}|\gamma|k+\frac{1}{2}|\beta|k} 2^{C\varepsilon k |\beta|},
\end{equation} and the   kernel $ K_{\textnormal{deg},k}$ satisfies the estimate
\begin{equation}
    \left\Vert \smallint\limits_{\mathbb{R}^n}e^{2\pi i\Phi_\tau(x,\xi)}a_k(x,\xi)e^{-2\pi i y\cdot \xi} d\xi   \right\Vert_{L^1(\mathbb{R}^n_x)}\lesssim 2^{-\varepsilon k},
\end{equation} uniformly in $y\in \mathbb{R}^n,$ and in $\varepsilon>0$ small enough.
In view of Remark \ref{remark:final:part:proof} the proof of  Proposition \ref{Proposition:deg} is complete.    
\end{proof}

\subsection{Boundedness of the non-degenerate component} Now, we are going to prove the weak (1,1) boundedness of the non-degenerate component $T_{\textnormal{nondeg}} $ in \eqref{non:de:c}. To do this, we follow the approach in \cite{Tao}, by making the factorisation $T_{\textnormal{nondeg}}=SA+ E $
modulo an error operator $E=T_{\textnormal{nondeg}}-SA$ bounded on $L^1.$ Here, we are going to construct a pseudo-differential operator $S$ of order zero and an averaging operator $A,$ which is bounded on $L^1.$

\subsubsection{Construction of the operators $A$ and  $S$}\label{Construction of the operatorsA:S} Let us consider the projection $\pi_{1}:\textnormal{supp}(a)\rightarrow \mathbb{R}^n$ in the first component $(x,\xi)\mapsto x,$ and let $V$ be an open subset, such that $\overline{V}$ is compact with $\pi_{1}(\textnormal{supp}(a))\subset V.$ Let $1_{\overline{V}}$ be the characteristic function of $\overline{V}.$
The operator $A$ will be defined as follows
$$  Af(x)$$
\begin{align*}
  := \sum_{k\gg 1}\smallint\smallint e^{2\pi i\xi'\cdot \nabla_\xi\Phi_\tau(x,\omega)}\psi(x,\omega)e^{-\mu(x,\omega)\pi i/4} &(1-\phi_{-\varepsilon k}(J(x,\omega)))\eta_{k}(\xi')e^{-(1+i\tau)\textnormal{Im}(\Phi(x,\xi'))}
\end{align*}
$$ \times 1_{\overline{V}}(x) |J(x,\omega)|^{1/2}\widehat{f}(\xi')d\xi'd\omega,$$
where $\nabla_\xi\Phi_\tau(x,\omega)$ denotes the value of $\nabla_\xi\Phi_\tau(x,\xi)$ at $\xi=(\omega,1),$ and
 $\psi:=\psi(x,\omega)$ is a bump function to be chosen later. The integer quantities $\mu(x,\omega)$ are such that they satisfy, according to the principle of stationary phase, the asymptotic expansion (see Proposition \ref{Stationar:phase})
    \begin{align*}
        \smallint e^{2\pi i\xi'\cdot \nabla_\xi\Phi_\tau(x,\omega)}\psi(x,\omega)d\omega=e^{2\pi i\Phi_\tau(x,\xi')}\psi(x,\omega')e^{\mu(x,\omega')\pi i/4}(\lambda')^{-(n-1)/2}|J(x,\omega')|^{-1/2}+\cdots,
    \end{align*} when $|\xi'|\gg 1.$ Since $\mu(x,\omega)$ are smooth functions in the region of integration they are constant on each connected component of this region.    

\begin{remark}
 When  $\textnormal{Im}(\Phi(x,\xi))\equiv 0,$ that is if $\Phi$ is real-valued, the definition of $A$ purely involves the phase function $\Phi_\tau$ and the {\it curvature term } $J(x,\omega)=\det(\nabla_\omega^2\Phi_\tau(x,\omega)),$ the $L^1$-boundedness of $A$ follows from the argument for real-valued phase functions in Tao \cite[Pages 14-15]{Tao}. 
\end{remark}

\begin{lemma}\label{boundedness:A}
    $A$ extends to a bounded operator on $L^1(\mathbb{R}^n)$ provided that there exists $\varkappa_0>0,$ such that for all $(x,\theta)\in \overline{V}\times \overline{\{\theta: |\theta|\sim 1\}},$ the lower bound $$\textnormal{Im}(\Phi(x,\theta))>\varkappa_0,$$ holds.
\end{lemma}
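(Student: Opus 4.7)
The plan is to decompose $A=\sum_{k\gg 1}A_{k}$ into its dyadic summands and to estimate each $A_{k}$ separately in the operator norm $L^{1}\to L^{1}$. The strictly positive type hypothesis will provide doubly exponential decay in $k$ for these norms, rendering the sum manifestly convergent.

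The key observation is that by homogeneity of degree one of $\Phi$ in $\xi$ together with the strictly positive type hypothesis, writing $\xi'=|\xi'|\theta$ with $|\theta|=1$ one has
\begin{equation*}
\textnormal{Im}(\Phi(x,\xi'))=|\xi'|\,\textnormal{Im}(\Phi(x,\theta))\geq \varkappa_{0}|\xi'|
\end{equation*}
for all $x\in\overline{V}$ and $\xi'\neq 0$. On $\textnormal{supp}(\eta_{k})$ one has $|\xi'|\sim 2^{k}$, and hence the oscillating factor satisfies the pointwise bound
\begin{equation*}
|e^{-(1+i\tau)\textnormal{Im}(\Phi(x,\xi'))}|=e^{-\textnormal{Im}(\Phi(x,\xi'))}\leq e^{-\varkappa_{0}2^{k-1}}.
\end{equation*}

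Next I would rewrite each $A_{k}$ as an integral operator $A_{k}f(x)=\int K_{k}(x,y)\,f(y)\,dy$ via Fourier inversion, whose kernel has the form
\begin{equation*}
K_{k}(x,y)=\iint e^{2\pi i(\xi'\cdot\nabla_{\xi}\Phi_{\tau}(x,\omega)-y\cdot\xi')}\,b_{k}(x,\omega,\xi')\,d\xi'\,d\omega,
\end{equation*}
with $b_{k}$ collecting all the amplitude factors appearing in the definition of $A$. Applying the triangle inequality, the pointwise exponential bound above, the boundedness of $\psi$, $|J|^{1/2}$, $(1-\phi_{-\varepsilon k}(J))$ and $e^{-i\mu\pi/4}$ on their compact $(x,\omega)$-supports, together with $\int|\eta_{k}(\xi')|\,d\xi'\lesssim 2^{kn}$, one obtains the crude pointwise estimate
\begin{equation*}
|K_{k}(x,y)|\lesssim 2^{kn}\,e^{-\varkappa_{0}2^{k-1}}\,1_{\overline{V}}(x),
\end{equation*}
uniformly in $y$. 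By Minkowski's inequality this yields $\|A_{k}\|_{L^{1}\to L^{1}}\leq \sup_{y}\|K_{k}(\cdot,y)\|_{L^{1}(\mathbb{R}^n_x)}\lesssim |\overline{V}|\,2^{kn}\,e^{-\varkappa_{0}2^{k-1}}$. Since the right-hand side decays doubly exponentially in $k$, the series $\sum_{k\gg 1}\|A_{k}\|_{L^{1}\to L^{1}}$ converges, so $A=\sum_{k\gg 1}A_{k}$ extends to a bounded operator on $L^{1}(\mathbb{R}^{n})$.

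There is essentially no obstacle here: the whole point of reducing from positive type to strictly positive type in Section~\ref{reduction} is precisely to secure the pointwise lower bound $\textnormal{Im}(\Phi(x,\xi'))\geq \varkappa_{0}|\xi'|$, which collapses the $L^{1}$-boundedness of $A$ to a direct summation. There is no need to invoke Tao's more refined Littlewood--Paley argument for the averaging operator; the exponential smallness of $e_{\Phi}$ on each dyadic shell already dominates the polynomial growth coming from the measure of the shell. The only care needed is to verify that all other amplitude factors contribute constants independent of $k$, which is immediate from the compactness of $\overline{V}$ and the fact that $\omega$ ranges over a bounded set on the support of $\psi$.
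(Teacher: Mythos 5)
Your proof is correct, and it takes a genuinely different, more elementary route than the paper's. The paper proceeds by Abel summation in $k$, trading $(1-\phi_{-\varepsilon k}(J))\eta_k$ for $(\phi_{-\varepsilon(k+1)}(J)-\phi_{-\varepsilon k}(J))\phi_k$, which together with the factor $|J|^{1/2}$ extracts a gain $2^{-\varepsilon k/2}$; it then recognizes the inner $\xi'$-integral via Fourier inversion as $[\Omega(x,D)f](\nabla_\xi\Phi_\tau(x,\omega))$ with $\Omega(x,D)=1_{\overline{V}}(x)e^{-(1+i\tau)\textnormal{Im}(\Phi(x,D))}\phi_k^\bullet(D)$, proves this is uniformly $L^1$-bounded by Schur's lemma, and finally changes variables $z=F_\omega(x)=\nabla_\xi\Phi_\tau(x,\omega)$. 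You bypass all of that scaffolding: on the $k$-th shell the pointwise bound $|e^{-(1+i\tau)\textnormal{Im}(\Phi(x,\xi'))}|\leq e^{-\varkappa_0 2^{k-1}}$ already dominates the volume factor $2^{kn}$, so $\sup_y\|K_k(\cdot,y)\|_{L^1_x}$ is doubly exponentially small and Minkowski plus direct summation finishes. It is worth noting that the paper's own argument still rests on the finiteness of $\mathscr{K}=\sup_{k}e^{-2^k\varkappa_0}2^{kn}$, which is precisely the quantity you isolate; the Abel summation, the $2^{-\varepsilon k/2}$ curvature gain, and the change-of-variables/Schur's-lemma machinery mirror Tao's real-phase treatment (where no exponential smallness is available) and are inherited rather than strictly needed for this particular lemma. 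Your shorter argument makes transparent that, under the strict-positivity hypothesis, the oscillatory structure of $\xi'\cdot\nabla_\xi\Phi_\tau(x,\omega)$ and the curvature localization play no role in the $L^1$-boundedness of $A$: any uniformly bounded amplitude supported in $\overline{V}\times\{|\omega|\lesssim 1\}$ would do. The only small proviso, which you correctly flag, is that the $\omega$-integration must range over a compact set, as it does on the support of the bump $\psi$.
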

\begin{proof}First, assume that $\widehat{f}(\xi)$ is smooth and compactly supported and that $f$ vanishes when $|\xi|\leq 1.$ Note that the support of $\eta_{k}(\xi')$ is contained in the region $|\xi'|\sim 2^ k.$
By applying summation by parts, because of the properties of the support of $f,$ boundary terms vanish and we can write
\begin{align*}
  &Af(x)\\
  &:= \sum_{k\gg 1}\smallint\smallint_{|\xi'|\sim 2^ k} e^{2\pi i\xi'\cdot \nabla_\xi\Phi_\tau(x,\omega)}\psi(x,\omega)e^{-\mu(x,\omega)\pi i/4} (1-\phi_{-\varepsilon k}(J(x,\omega)))\eta_{k}(\xi')e^{-(1+i\tau)\textnormal{Im}(\Phi(x,\xi'))}\\
&\times |J(x,\omega)|^{1/2}\widehat{f}(\xi')d\xi'd\omega\\
&=- \sum_{k\gg 1}\smallint\smallint_{|\xi'|\sim 2^ k} e^{2\pi i\xi'\cdot \nabla_\xi\Phi_\tau(x,\omega)}\psi(x,\omega)e^{-\mu(x,\omega)\pi i/4} (\phi_{-\varepsilon (k+1)}(J(x,\omega))-\phi_{-\varepsilon k}(J(x,\omega)))\\
&\times \phi_{k}(\xi')e^{-(1+i\tau)\textnormal{Im}(\Phi(x,\xi'))} 1_{\overline{V}}(x)|J(x,\omega)|^{1/2}\widehat{f}(\xi')d\xi'd\omega.
\end{align*}
In view of the triangle inequality, it suffices to prove  the estimate
\begin{equation}
    \Vert A_k f\Vert_{L^1(\mathbb{R}^n)}\lesssim 2^{-\varepsilon k}\Vert f\Vert_{L^1(\mathbb{R}^n)},
\end{equation} where 
\begin{align*}
    A_kf(x)&=- \smallint\smallint_{|\xi'|\sim 2^ k} e^{2\pi i\xi'\cdot \nabla_\xi\Phi_\tau(x,\omega)}\psi(x,\omega)e^{-\mu(x,\omega)\pi i/4}) (\phi_{-\varepsilon (k+1)}(J(x,\omega))-\phi_{-\varepsilon k}(J(x,\omega)))\\
&\times \phi_{k}(\xi')e^{-(1+i\tau)\textnormal{Im}(\Phi(x,\xi'))} 1_{\overline{V}}(x) |J(x,\omega)|^{1/2}\widehat{f}(\xi')d\xi'd\omega. 
\end{align*}
Let us denote by $$\phi_k^\bullet(\xi'):=\phi_k(\xi')\chi_{ \{\xi': |\xi'|\sim 2^k\} }(\xi').$$
Then,
\begin{align*}
    A_kf(x)&=- \smallint\smallint e^{2\pi i\xi'\cdot \nabla_\xi\Phi_\tau(x,\omega)}\psi(x,\omega)e^{-\mu(x,\omega)\pi i/4}) (\phi_{-\varepsilon (k+1)}(J(x,\omega))-\phi_{-\varepsilon k}(J(x,\omega)))\\
&\times \phi_{k}^\bullet(\xi')e^{-(1+i\tau)\textnormal{Im}(\Phi(x,\xi'))} 1_{\overline{V}}(x) |J(x,\omega)|^{1/2}\widehat{f}(\xi')d\xi'd\omega. 
\end{align*}
Using the Fourier inversion formula, we have that
\begin{align*}
   & \smallint e^{2\pi i\xi'\cdot \nabla_\xi\Phi_\tau(x,\omega)} \phi_{k}^\bullet(\xi') e^{-(1+i\tau)\textnormal{Im}(\Phi(x,\xi'))}1_{\overline{V}}(x) \widehat{f}(\xi')d\xi'\\
   &=1_{\overline{V}}(x)[e^{-(1+i\tau)\textnormal{Im}(\Phi(x,D))}\phi_{k}^\bullet(D)f](\nabla_\xi\Phi_\tau(x,\omega)).
\end{align*} Since $\textnormal{supp}[(\phi_{-\varepsilon (k+1)}(J(x,\omega))]$ is contained in the set where $J(x,\omega)\lesssim 2^{-\varepsilon k},$ the difference $\phi_{-\varepsilon (k+1)}(J(x,\omega))-\phi_{-\varepsilon k}(J(x,\omega)),$ is supported on the region $J(x,\omega)\sim 2^{-k\varepsilon}.$ Since $|\phi_k(t)|\lesssim \Vert\phi\Vert_{L^{\infty}},$ $\forall t>0,$ we have that $$|(\phi_{-\varepsilon (k+1)}(J(x,\omega))-\phi_{-\varepsilon k}(J(x,\omega))||J(x,\omega)|^{1/2}\lesssim 2^{-\varepsilon k/2},$$
and consequently
\begin{align*}
    \Vert A_k f\Vert_{L^1(\mathbb{R}^n)}  &=\smallint|A_k f(x)|dx\\
    &\lesssim\smallint\smallint |1_{\overline{V}}(x)[e^{-(1+i\tau)\textnormal{Im}(\Phi(x,D))}\phi_{k}^\bullet (D)f](\nabla_\xi\Phi_\tau(x,\omega))|2^{-\varepsilon k/2}|\phi(x,\omega)|dx d\omega.
\end{align*}    Note that the operator 
\begin{equation*}
   \Omega(x,D)= 1_{\overline{V}}(x)e^{-(1+i\tau)\textnormal{Im}(\Phi(x,D))}\phi_{k}^\bullet(D)
\end{equation*}is uniformly bounded in $k$ on $L^1(\mathbb{R}^n).$ Indeed, its kernel $\Omega(x,D)\delta(y),$ given by  
$$ \Omega(x,D)\delta(y)=\smallint e^{2\pi i(x-y)\xi}1_{\overline{V}}(x)\phi_{k}^\bullet(\xi)e^{-(1+i\tau)\textnormal{Im}(\Phi(x,\xi))}d\xi  $$
belongs to $L^1(\mathbb{R}_x^n)$ uniformly in $y\in \mathbb{R}^n.$
Since  the phase function $\Phi$ is of strictly positive type, that is, $\textnormal{Im}(\phi)> 0,$ and $|\phi_{k}(\xi)|\lesssim 1,$ note that
\begin{align*}
    \smallint|\smallint e^{2\pi i(x-y)\xi}1_{\overline{V}}(x) &\phi_{k}^\bullet(\xi)e^{-(1+i\tau)\textnormal{Im}(\Phi(x,\xi))}d\xi|dx \\
   = \smallint|\smallint_{|\xi|\sim 2^k} e^{2\pi i(x-y)\xi}1_{\overline{V}}(x) &\phi_{k}(\xi)e^{-(1+i\tau)\textnormal{Im}(\Phi(x,\xi))}d\xi|dx \\
    &\leq \smallint   1_{\overline{V}}(x)\sup_{|\xi'|\sim 2^{k}}e^{-\textnormal{Im}(\Phi(x,\xi'))} dx\smallint_{|\xi|\sim 2^{k}}|\phi_{k}(\xi)| d\xi\\
    &\lesssim |\overline{V}|\sup_{x\in \overline{V}} \sup_{|\xi'|\sim 2^{k}}e^{-\textnormal{Im}(\Phi(x,\xi'))}\smallint_{|\xi|\sim 2^{k}}d\xi\\
    &\lesssim |\overline{V}|\sup_{x\in \overline{V}} \sup_{|\xi'|\sim 2^{k}}e^{-2^{k}\textnormal{Im}(\Phi(x,2^{-k}\xi'))}2^{kn}\\
    & \lesssim |\overline{V}|\sup_{x\in \overline{V}} \sup_{|\theta|\sim 1}e^{-2^{k}\textnormal{Im}(\Phi(x,\theta))}2^{kn}<\infty.
\end{align*}Indeed, since $\textnormal{Im}(\phi)> 0,$ when $|\xi|\neq 0,$ the compactness of the closed annulus $\overline{\{\theta: |\theta|\sim 1\}},$ and of $\overline{V},$ imply that there exists $\varkappa_0>0$ such that for all $(x,\omega)\in \overline{V}\times \overline{\{\theta: |\theta|\sim 1\}},$ one has the lower bound $$\textnormal{Im}(\Phi(x,\theta))>\varkappa_0.$$ Therefore, 
$$\mathscr{K}:=\sup_{k>0}\sup_{x\in \overline{V}} \sup_{|\theta|\sim 1}e^{-2^{k}\textnormal{Im}(\Phi(x,\theta))}2^{kn}\leq \sup_{k>0} e^{-2^{k}\varkappa_0}2^{kn}<\infty.$$
Thus, we have proved that  $\sup_{y\in \mathbb{R}^n}\Vert  \Omega(x,D)\delta(y) \Vert_{L^1(\mathbb{R}^n_x)}<\infty,$ and Schur's lemma gives the boundedness of $\Omega(x,D)$ on $L^1(\mathbb{R}^n).$ Note also, that if $z=F(x)$ is an smooth change of coordinates on $\overline{V}$, the operator $$\Omega(F^{-1}(z),D)=1_{\overline{V}}(F^{-1}(z))\phi_{k}^\bullet(D)e^{-(1+i\tau)\textnormal{Im}(\Phi(F^{-1}(z),D))}$$ is also bounded on $L^1.$ Moreover, since $1_{\overline{V}}\circ F^{-1}=1_{F(\overline{V})},$ by following the previous argument, we have that
\begin{align*}
   \sup_{y}  \Vert \Omega(F^{-1}(z),D)\delta(y)\Vert_{L^1(\mathbb{R}^n_z)}\lesssim  |F(\overline{V})|\sup_{z\in F(\overline{V})} \sup_{|\theta|\sim 1}e^{-2^{k}\textnormal{Im}(\Phi(F^{-1}(z),\theta))}2^{kn}<\infty.
\end{align*}Now, since for any $\omega,$ $z=F_\omega(x)=\nabla_\xi\Phi_\tau(x,\omega)$ is a diffeomorphism on $\overline{V},$ taking $R>0$ large enough such that $\{\omega: (x,\omega)\in \textnormal{supp}(\phi(x,\omega))\}\subset\{|\omega|\leq R\},$ we have that
\begin{align*}
    &\Vert A_k f\Vert_{L^1(\mathbb{R}^n)}  \\
    &\lesssim\smallint_{|\omega|\leq R}\smallint |1_{\overline{V}}(x)[\phi_{k}^\bullet(D)e^{-(1+i\tau)\textnormal{Im}(\Phi(x,D))}f](\nabla_\xi\Phi_\tau(x,\omega))|2^{-\varepsilon k/2}|\phi(x,\omega)|dx d\omega\\
    &\lesssim \Vert\phi(x,\omega)\Vert_{L^{\infty}} \smallint_{|\omega|\leq R}\smallint |\Omega(x,D)f(F_{\omega}(x))|2^{-\varepsilon k/2}dx d\omega\\
    &= \Vert\phi(x,\omega)\Vert_{L^{\infty}} \smallint_{|\omega|\leq R}\smallint |\Omega(F^{-1}_\omega(z),D)f(z)|2^{-\varepsilon k/2}|\det(DF_{\omega}^{-1}(z))|dz d\omega\\
    &\lesssim 2^{-\varepsilon k/2} \smallint_{|\omega|\leq R}\Vert\Omega(F^{-1}_\omega(z),D)f\Vert_{L^1(\mathbb{R}^n_z)}d\omega\\
    &\lesssim 2^{-\varepsilon k/2} \sup_{|\omega'|\leq R}\Vert\Omega(F^{-1}_{\omega'}(z),D)f\Vert_{L^1(\mathbb{R}^n_z)} \smallint_{|\omega|\leq R}d\omega\\
    & \lesssim 2^{-\varepsilon k/2} \sup_{|\omega'|\leq R} |F_{\omega'}(\overline{V})|\sup_{z\in F_{\omega'}(\overline{V})} \sup_{|\theta|\sim 1}e^{-2^{k}\textnormal{Im}(\Phi(F_{\omega'}^{-1}(z),\theta))}2^{kn}\Vert f\Vert_{L^1(\mathbb{R}^n)}\\
    &=2^{-\varepsilon k/2} \sup_{|\omega'|\leq R} |F_{\omega'}(\overline{V})|\sup_{x\in \overline{V}} \sup_{|\theta|\sim 1}e^{-2^{k}\textnormal{Im}(\Phi(x,\theta))}2^{kn}\Vert f\Vert_{L^1(\mathbb{R}^n)}\\
    &\lesssim_{\mathscr{K}} 2^{-\varepsilon k/2} \sup_{|\omega'|\leq R} |F_{\omega'}(\overline{V})|\Vert f\Vert_{L^1(\mathbb{R}^n)}.
\end{align*} Since $F_{\omega}$ is $C^1$ in $\omega,$ and $\overline{V}$ is compact, $ \sup_{|\omega'|\leq R} |F_{\omega'}(\overline{V})|<\infty.$ So we have the estimate
\begin{equation}
    \Vert A_k f\Vert_{L^1(\mathbb{R}^n)}\lesssim 2^{-k\varepsilon/2}\Vert f\Vert_{L^1(\mathbb{R}^n)}
\end{equation}as desired proving the $L^1$-boundedness of $A.$ The proof of Lemma \ref{boundedness:A} is complete.
\end{proof} 

\begin{definition}
 Using that the mapping $$(x,\xi)\mapsto (x,\theta):=(x, \nabla_x\Phi_\tau(x,\xi))$$ is a diffeomorphism on the support of the symbol $a:=a(x,\xi),$ we define the pseudo-differential operator $S$ associated to the symbol\begin{equation}\label{symbol:s}
   s:= s(x, \theta )=\frac{\lambda^{\frac{n-1}{2}}a(x,\xi)}{\psi(x,\omega)}1_{\overline{V}},
\end{equation}   where, $\forall (x,\omega),$ $\psi(x,\omega)\neq 0,$   is a bounded function away from zero and smooth for all $x$ and for all $\omega$. 
\end{definition}
\begin{remark}
    Since $V$ is an open neighbourhood of the set $\{x\in \mathbb{R}^n:(x,\xi)\in\textnormal{supp}[a]\},$ we have that $a(x,\xi)=0$ on $\overline{V}\setminus V.$ In consequence  $s(x, \theta )=\frac{\lambda^{\frac{n-1}{2}}a(x,\xi)}{\psi(x,\omega)}1_{\overline{V}}=\frac{\lambda^{\frac{n-1}{2}}a(x,\xi)}{\psi(x,\omega)},$ is smooth in $(x,\omega)$ and $\{x\in \mathbb{R}^n:(x,\theta)\in \textnormal{supp}[s] \}\subset V.$ 
\end{remark}
Now we prove the following fundamental property of the operator $S.$
\begin{lemma}
    The pseudo-differential operator $S$ defined by
    \begin{equation}
        Sf(x)=\smallint e^{2\pi i x\cdot \xi}s(x,\theta(x,\xi))\widehat{f}(\xi)d\xi,
    \end{equation}
\end{lemma}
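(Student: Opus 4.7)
The plan is to show that $S$ is a classical pseudo-differential operator whose symbol $s(x,\theta)$ belongs to the H\"ormander class $S^0_{1,0}$ (modulo replacing the rough cutoff $1_{\overline{V}}$ by a smooth one), and then to invoke Calder\'on--Zygmund theory to conclude that $S$ is of weak $(1,1)$ type and bounded on $L^p$ for $1<p<\infty$.

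The first step is to verify that the change of variables $\xi \mapsto \theta := \nabla_x \Phi_\tau(x,\xi)$ is a global $C^\infty$ diffeomorphism in $\xi\neq 0$ on a conic neighborhood of $\pi_2(\textnormal{supp}(a))$ for each fixed $x\in V$. This is precisely the content of Assumption \ref{assumption:local:graph}: the non-vanishing of $\det\partial_x\partial_\xi \Phi_\tau$ plus the homogeneity of $\Phi_\tau$ of degree $1$ in $\xi$ gives both local invertibility and the homogeneity $\theta(x,t\xi) = t\theta(x,\xi)$, so that $|\theta| \sim |\xi|$ uniformly for $(x,\xi)$ in the relevant conic region. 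In particular, the inverse $\xi = \xi(x,\theta)$ is smooth and homogeneous of degree $1$ in $\theta$, and its derivatives in $\theta$ satisfy $|\partial_\theta^\alpha \xi(x,\theta)|\lesssim (1+|\theta|)^{1-|\alpha|}$.

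Next, I would check the symbol estimates for $s$. Since $a\in S^{-(n-1)/2}_{1,\mathscr{C}}$ and $\lambda=\xi_n\sim|\xi|$ on the cone $\mathscr{C}$, the factor $\lambda^{(n-1)/2}$ exactly cancels the order of $a$, so $\lambda^{(n-1)/2}a(x,\xi)$ is of order zero in $\xi$. The denominator $\psi(x,\omega)$ is smooth and bounded below away from zero on the conic support, hence preserves the order. The characteristic function $1_{\overline V}$ should be replaced by a smooth cutoff $\chi_V\in C_c^\infty$ equal to $1$ on $\pi_1(\textnormal{supp}(a))$ and supported in a slightly enlarged neighborhood; this modification leaves $S$ unchanged on functions whose relevant content lives where $\chi_V\equiv 1$, and it renders $s$ smooth in $x$. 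Applying Lemma \ref{Lemma:1:1} to pass between Cartesian derivatives in $\xi$ and the projective derivatives in $(\lambda,\omega)$, together with the chain rule using the bounds on $\xi(x,\theta)$ from step one, yields
\begin{equation*}
    |\partial_x^\beta \partial_\theta^\alpha s(x,\theta)| \leq C_{\alpha,\beta}\,(1+|\theta|)^{-|\alpha|},
\end{equation*}
which is exactly the $S^0_{1,0}$ symbol estimate. The final step is then standard: a pseudo-differential operator with symbol in $S^0_{1,0}$ is a Calder\'on--Zygmund operator, hence bounded on $L^p$ for $1<p<\infty$ and of weak type $(1,1)$.

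The main obstacle will be the bookkeeping in the second step: because the change of variables $\xi\leftrightarrow\theta$ depends on $x$, derivatives $\partial_x$ of $s(x,\theta(x,\xi))$ generate extra terms through $\partial_x\theta$, and one must show that these do not destroy the symbol estimates. The cleanest route is to work intrinsically on $s(x,\theta)$ after inversion, using the homogeneity $|\theta|\sim|\xi|$ to trade factors of $|\theta|$ for $|\xi|$, and Lemma \ref{Lemma:1:1} to rewrite mixed derivatives in terms of angular and radial derivatives where the order-zero estimate is transparent. A secondary technical point is the smooth replacement of $1_{\overline V}$, but this is harmless because the geometric data in the construction of $A$ (in particular $\psi(x,\omega)$) can be chosen with matching support so that $SA$ agrees with $T_{\textnormal{nondeg}}$ on the relevant frequency annuli modulo an $L^1$-bounded error.
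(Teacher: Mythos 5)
Your proposal is correct and takes essentially the same route as the paper: verify that $s(x,\theta(x,\xi))$ defines a symbol in $S^0_{1,0}$ (the $\lambda^{(n-1)/2}$ factor cancels the order $-(n-1)/2$ of $a$, the diffeomorphism $\xi\mapsto\theta=\nabla_x\Phi_\tau(x,\xi)$ and its homogeneity keep the class invariant, and $\psi$ is bounded away from zero), then invoke Calder\'on--Zygmund theory. The only small difference is cosmetic: you propose replacing $1_{\overline V}$ with a smooth cutoff, whereas the paper's remark notes that $a(x,\xi)=0$ on $\overline V\setminus V$, so the indicator is redundant and $s$ is already smooth without any modification.
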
is of weak (1,1) type.
\begin{proof} Since $\lambda^{ \frac{n-1}{2} }=\xi_{n}^{ \frac{n-1}{2}}$ and $a_\tau:=a_\tau(x,\xi)\in S^{-\frac{n-1}{2}}_{1,0}$ we have that $s:=s(x,\theta(x,\xi))$ is symbol of order zero. Then, the  pseudo-differential operator $S=s(x,\nabla_x\Phi_\tau(x,D))$ is essentially a Calder\'on-Zygund operator and then it satisfies the weak (1,1) inequality, namely, $S:L^1\rightarrow L^{1,\infty}$ is bounded. 
\end{proof}

\subsection{Control of the error operator}\label{L1:boundedness:remaninder} In this subsection we prove that the error operator $E$ is bounded on $L^{1}.$ The idea behind the proof of this fact can be understood from the calculus of Fourier integral operators. The operator $A$ is essentially a Fourier integral operator with the same phase function that $T,$ in the sense that its phase $2\pi i\Phi_\tau(x,\omega)$ evaluated at the stationary point $\omega=\omega'$ is $2\pi i\Phi_\tau(x,\xi'),$ see \cite[Page 19, Eq. (16)]{Tao}. So, intuitively, if $A$ would have  had  the form in \eqref{FIO:Quantisation}, namely, $Af(x)=\smallint\limits_{\mathbb{R}^n}e^{i\phi_A(x,\xi)}\sigma_A(x,\xi)\widehat{f}(\xi)d\xi,$  one could compute an asymptotic expansion for the symbol $\sigma(x,\xi)$ of $SA$ as follows 
\begin{equation}
    \sigma(x,\xi)=\smallint\smallint \sigma_A(x,\xi)s(x,\zeta)e^{2\pi i(\phi_A(y,\xi)-\phi_A(x,\xi)+(x-y)\cdot \zeta) }d\zeta dy,
\end{equation} and satisfying the asymptotic expansion, see e.g. \cite[Page 17]{Staubach},
\begin{equation}
   \sigma(x,\xi)=s(x,\nabla\phi_A(x,\xi))\sigma_A(x,\xi)+\sum_{0<|\alpha|<N}\frac{1}{\alpha!}\sigma_{\alpha}(x,\xi)+r_{N}(x,\xi), 
\end{equation}and the symbols $\sigma_{\alpha}$ satisfying the estimates (under suitable conditions on the phase $\phi_A$)
$$|\partial_{\xi}^{\beta}\partial_x^{\gamma}\sigma_\alpha(x,\xi)|\lesssim_{\varepsilon'} (1+|\xi|)^{-\frac{n-1}{2}-(\frac{1}{2}-\varepsilon')|\alpha|-\frac{|\beta|}{2} +\frac{1}{2}|\gamma|},$$ for any $0<\varepsilon'<1/2.$
Below we will prove that essentially $T_{\textnormal{nondeg}}$ and $SA$ have the same principal symbol $s(x,\nabla\phi_A(x,\xi))\sigma_A(x,\xi),$ and then the order $\tilde{\mu}$ of the error operator $E=T_{\textnormal{nondeg}}-SA$  should be, at least heuristically,  the order of the $\sigma_{\alpha}$'s when $|\alpha|=1,$ that is $\tilde{\mu}\leq -\frac{n-1}{2}-\frac{1}{2}+\varepsilon'. $ This order $\tilde{\mu}$ is good enough to deduce the $L^1$-boundedness of $E.$ However, since the definition of $A$  also involves integration with respect to $\omega,$ the structure of this operator is not as the one in \eqref{FIO:Quantisation}, and one has to use the stationary phase method to prove that each dyadic component of the error $E,$ has a kernel with a  $L^1_x$-norm satisfying uniformly in $y,$ a decay proportional to $2^{C\varepsilon k}2^{-k/2}.$ Then, if proved this we are allowed to use Proposition 6.1 in \cite[Page 17]{Tao} to deduce the boundedness of $E,$ even under the presence of the oscillating term $e_{\Phi}(x,\xi)=e^{-(1+i\tau)\Phi(x,\xi)}.$

First, let us calculate the kernel $E\delta_z(x)=T_{\textnormal{nondeg}}\delta_{z}(x)-SA\delta_z(x)$ of the error operator $E.$ Note that
\begin{align*}
     &A\delta_z(x)\\
  &= \sum_{k\gg 1}\smallint\smallint e^{2\pi i\xi'\cdot \nabla_\xi\Phi_\tau(x,\omega)}\psi(x,\omega)e^{-\mu(x,\omega)\pi i/4}) (1-\phi_{-\varepsilon k}(J(x,\omega)))\eta_{k}(\xi')e^{-(1+i\tau)\textnormal{Im}(\Phi(x,\xi'))}\\
&\times |J(x,\omega)|^{1/2}\widehat{\delta_z}(\xi')d\xi'd\omega\\
&= \sum_{k\gg 1}\smallint\smallint\smallint e^{2\pi i\xi'\cdot \nabla_\xi\Phi_\tau(x,\omega)-2\pi iy\cdot \xi'}\psi(x,\omega)e^{-\mu(x,\omega)\pi i/4}) (1-\phi_{-\varepsilon k}(J(x,\omega)))\\
&\times \eta_{k}(\xi')e^{-(1+i\tau)\textnormal{Im}(\Phi(x,\xi'))} |J(x,\omega)|^{1/2}\delta_z(y)d\xi'd\omega dy\\
&= \sum_{k\gg 1}\smallint\smallint e^{2\pi i\xi'\cdot \nabla_\xi\Phi_\tau(x,\omega)-2\pi iz\cdot \xi'}\psi(x,\omega)e^{-\mu(x,\omega)\pi i/4} (1-\phi_{-\varepsilon k}(J(x,\omega)))\\
&\times \eta_{k}(\xi')e^{-(1+i\tau)\textnormal{Im}(\Phi(x,\xi'))} |J(x,\omega)|^{1/2}d\xi'd\omega .
\end{align*}
Consequently,
\begin{align*}
    SA\delta_{z}(x) &=\smallint e^{2\pi i x\cdot \zeta}s(x,\zeta)\widehat{A\delta_z}(\zeta)d\zeta\\
    &=\smallint\smallint e^{2\pi i (x-y)\cdot \zeta}s(x,\zeta)A\delta_z(y)dyd\zeta\\
    &=       \sum_{k\gg 1}\smallint\smallint\smallint\smallint e^{2\pi i\xi'\cdot \nabla_\xi\Phi_\tau(y,\omega)-2\pi iz\cdot \xi'+2\pi i (x-y)\cdot \zeta}s(x,\zeta)\psi(y,\omega)e^{-\mu(y,\omega)\pi i/4} \\
&\times (1-\phi_{-\varepsilon k}(J(y,\omega)))\eta_{k}(\xi')e^{-(1+i\tau)\textnormal{Im}(\Phi(y,\xi'))} |J(y,\omega)|^{1/2}d\xi'd\omega dy d\zeta\\
&=       \sum_{k\gg 1}\smallint\smallint\smallint\smallint e^{2\pi i[\xi'\cdot( \nabla_\xi\Phi_\tau(y,\omega)-z)+ (x-y)\cdot \zeta]}s(x,\zeta)\psi(y,\omega)e^{-\mu(y,\omega)\pi i/4} \\
&\times (1-\phi_{-\varepsilon k}(J(y,\omega)))\eta_{k}(\xi')e^{-(1+i\tau)\textnormal{Im}(\Phi(y,\xi'))} |J(y,\omega)|^{1/2}d\xi'd\omega dy d\zeta. 
\end{align*}
On the other hand, the kernel of $T_{\textnormal{nondeg}}$ is given by
\begin{align*}
     & T_{\textnormal{nondeg}} \delta_z(x)\\
     &= \sum_{k\gg 1}\smallint e^{2\pi i[\Phi_\tau(x,\xi') -\xi'\cdot z]}a(x,\xi')e^{-(1+i\tau)\textnormal{Im}(\Phi(x,\xi'))}(1-\phi_{-\varepsilon k}(J(x,\omega')))\eta_{k}(\xi')d\xi'\\
     &= \sum_{k\gg 1} \smallint e^{2\pi i[\Phi_\tau(x,\xi') -\xi'\cdot z]}s(x,\nabla_x\Phi_\tau(x,\xi'))(\lambda')^{-\frac{n-1}{2}}e^{-(1+i\tau)\textnormal{Im}(\Phi(x,\xi'))}\\
     &\times (1-\phi_{-\varepsilon k}(J(x,\omega')))\eta_{k}(\xi')d\xi'.
\end{align*}In order to prove the $L^1$ boundedness of $E$  it suffices to prove the following estimate on each dyadic component
\begin{align*}
   \Vert &\smallint e^{2\pi i[\Phi_\tau(x,\xi') -\xi'\cdot z]}s(x,\nabla_x\Phi_\tau(x,\xi'))(\lambda')^{-\frac{n-1}{2}}e^{-(1+i\tau)\textnormal{Im}(\Phi(x,\xi'))}\\
    &\times (1-\phi_{-\varepsilon k}(J(x,\omega')))\eta_{k}(\xi')d\xi'\\
    &- \smallint\smallint\smallint\smallint e^{2\pi i[\xi'\cdot( \nabla_\xi\Phi_\tau(y,\omega)-z)+ (x-y)\cdot \zeta]}s(x,\zeta)\psi(y,\omega)e^{-\mu(y,\omega)\pi i/4} \\
&\times (1-\phi_{-\varepsilon k}(J(y,\omega)))\eta_{k}(\xi')e^{-(1+i\tau)\textnormal{Im}(\Phi(y,\xi'))} |J(y,\omega)|^{1/2}d\omega dy d\zeta d\xi'\Vert_{L^1(\mathbb{R}^n_x)}\lesssim 2^{C\varepsilon k}2^{-k/2}.
\end{align*}However, the integral inside of this $L^1$-norm, can be written as follows
\begin{align*}
  \Vert &\smallint e^{2\pi i[\Phi_\tau(x,\xi') -\xi'\cdot z]}s(x,\nabla_x\Phi_\tau(x,\xi'))(\lambda')^{-\frac{n-1}{2}}e^{-(1+i\tau)\textnormal{Im}(\Phi(x,\xi'))}\\
    &\times (1-\phi_{-\varepsilon k}(J(x,\omega')))\eta_{k}(\xi')d\xi'\\
    &- \smallint\smallint\smallint\smallint e^{2\pi i[\xi'\cdot( \nabla_\xi\Phi_\tau(y,\omega)-z)+ (x-y)\cdot \zeta]}s(x,\zeta)\psi(y,\omega)e^{-\mu(y,\omega)\pi i/4} \\
&\times (1-\phi_{-\varepsilon k}(J(y,\omega)))\eta_{k}(\xi')e^{-(1+i\tau)\textnormal{Im}(\Phi(y,\xi'))} |J(y,\omega)|^{1/2}d\omega dy d\zeta d\xi'\Vert_{L^1(\mathbb{R}^n_x)}\\
=\Vert & \smallint e^{2\pi i[\Phi_\tau(x,\xi') -\xi'\cdot z]}s(x,\nabla_x\Phi_\tau(x,\xi'))(\lambda')^{-\frac{n-1}{2}}e^{-(1+i\tau)\textnormal{Im}(\Phi(x,\xi'))}\\
    &\times (1-\phi_{-\varepsilon k}(J(x,\omega')))\eta_{k}(\xi')d\xi'\\
    &- \smallint\smallint\smallint\smallint e^{2\pi i[\Phi_\tau(x,\xi')-z\cdot \xi'+[\xi'\cdot \nabla_\xi\Phi_\tau(y,\omega)-\Phi_\tau(x,\xi')]+ (x-y)\cdot \zeta]}s(x,\zeta)\psi(y,\omega)e^{-\mu(y,\omega)\pi i/4} \\
&\times (1-\phi_{-\varepsilon k}(J(y,\omega)))\eta_{k}(\xi')e^{-(1+i\tau)\textnormal{Im}(\Phi(y,\xi'))} |J(y,\omega)|^{1/2}d\omega dy d\zeta d\xi'\Vert_{L^1(\mathbb{R}^n_x)}\\
=\Vert & \smallint e^{2\pi i[\Phi_\tau(x,\xi') -\xi'\cdot z]}\\
    &\times \{s(x,\nabla_x\Phi_\tau(x,\xi'))(\lambda')^{-\frac{n-1}{2}}e^{-(1+i\tau)\textnormal{Im}(\Phi(x,\xi'))} (1-\phi_{-\varepsilon k}(J(x,\omega')))\\
    &- \smallint\smallint\smallint e^{2\pi i[\xi'\cdot \nabla_\xi\Phi_\tau(y,\omega)-\Phi_\tau(x,\xi')]+ (x-y)\cdot \zeta]}s(x,\zeta)\psi(y,\omega)e^{-\mu(y,\omega)\pi i/4} \\
&\times (1-\phi_{-\varepsilon k}(J(y,\omega)))e^{-(1+i\tau)\textnormal{Im}(\Phi(y,\xi'))} |J(y,\omega)|^{1/2}d\omega dy d\zeta\}\eta_{k}(\xi') d\xi'\Vert_{L^1(\mathbb{R}^n_x)}.
\end{align*}So, let us simplify the notation above by defining
\begin{align*}
 W^{0}_{x,k,z}(\xi'):=  s(x,\nabla_x\Phi_\tau(x,\xi'))(\lambda')^{-\frac{n-1}{2}}e^{-(1+i\tau)\textnormal{Im}(\Phi(x,\xi'))} (1-\phi_{-\varepsilon k}(J(x,\omega'))),
\end{align*} and 
\begin{align*}
    W_{x,k,z}(\xi')&:= \smallint\smallint\smallint e^{2\pi i[\xi'\cdot \nabla_\xi\Phi_\tau(y,\omega)-\Phi_\tau(x,\xi')+ (x-y)\cdot \zeta]}s(x,\zeta)\psi(y,\omega)e^{-\mu(y,\omega)\pi i/4} \\
&\times (1-\phi_{-\varepsilon k}(J(y,\omega)))e^{-(1+i\tau)\textnormal{Im}(\Phi(y,\xi'))} |J(y,\omega)|^{1/2}d\omega dy d\zeta.
\end{align*} 
Observe that in view of the Euler homogeneity identity $$\Phi_\tau(x,\xi')=\xi'\cdot \nabla_\xi\Phi_\tau(x,\xi'),$$ we can re-write the phase of the integral $ W_{x,k,z}(\xi')$ as follows
\begin{align*}
 &\xi'\cdot \nabla_\xi\Phi_\tau(y,\omega)-\Phi_\tau(x,\xi')+ (x-y)\cdot \zeta\\
 &= \xi'\cdot( \nabla_\xi\Phi_\tau(y,\omega)- \nabla_\xi\Phi_\tau(x,\xi'))+ (x-y)\cdot \zeta\\
 &=:\Psi_{x,k,z}(\omega, \zeta,y),
\end{align*}and then
\begin{align*}
    W_{x,k,z}(\xi')&:= \smallint\smallint\smallint e^{2\pi i \Psi_{x,k,z}(\omega, \zeta,y) }s(x,\zeta)\psi(y,\omega)e^{-\mu(y,\omega)\pi i/4} \\
&\times (1-\phi_{-\varepsilon k}(J(y,\omega)))e^{-(1+i\tau)\textnormal{Im}(\Phi(y,\xi'))} |J(y,\omega)|^{1/2}d\omega dy d\zeta.
\end{align*} 
The analysis above shows that in order to prove  the $L^1$ boundedness of $E$  it suffices to prove the following estimate 
\begin{equation}\label{W:errors}
    \Vert \smallint e^{2\pi i[\Phi_\tau(x,\xi') -\xi'\cdot z]}  [W_{x,k,z}(\xi')- W^{0}_{x,k,z}(\xi')]\eta_{k}(\xi')d\xi'\Vert_{L^1(\mathbb{R}^n_x)}\lesssim 2^{C\varepsilon k}2^{-k/2}.
\end{equation} 
To do so, we will estimate the integral $W_{x,k,z}(\xi')$ on the set of points  $(\omega,y,\zeta)$ that contribute more information to the integrand. This set of points is determined by the region:
$$ \mathscr{R}=\{(\omega,y,\zeta):\omega-\omega'=y-x=O(2^{\varepsilon k}2^{-k/2})\}.$$
By following the argument in \cite[Page 19]{Tao} one can make the approximation 
$$s(x,\zeta)=s(x,\nabla_x\Phi_\tau(x,\xi'))+O(2^{C\varepsilon k}2^{-k/2})$$
where the error term $O(2^{C\varepsilon k}2^{-k/2})=O_{x,\xi',\zeta}(2^{C\varepsilon k}2^{-k/2})$ has derivatives satisfying the estimate
\begin{equation}\label{remainder:term:final}
    |\partial_{\xi'}^\beta O_{x,\xi',\zeta}(2^{C\varepsilon k}2^{-k/2})|\lesssim O(2^{C|\beta|\varepsilon k}2^{-k/2}2^{-(n-1)k/2}2^{-k|\beta|}).
\end{equation}
 Denoting $\mathscr{R}^c:=\mathbb{R}^n\setminus \mathscr{R},$ we have that
\begin{align*}
 W_{x,k,z}(\xi')&:= \smallint\smallint\smallint_{\mathscr{R}} e^{2\pi i \Psi_{x,k,z}(\omega, \zeta,y) }s(x,\zeta)\psi(y,\omega)e^{-\mu(y,\omega)\pi i/4} \\
&\times (1-\phi_{-\varepsilon k}(J(y,\omega)))e^{-(1+i\tau)\textnormal{Im}(\Phi(y,\xi'))} |J(y,\omega)|^{1/2}d\omega dy d\zeta\\
&+ \smallint\smallint\smallint_{\mathscr{R}^{c}} e^{2\pi i \Psi_{x,k,z}(\omega, \zeta,y) }s(x,\zeta)\psi(y,\omega)e^{-\mu(y,\omega)\pi i/4} \\
&\times (1-\phi_{-\varepsilon k}(J(y,\omega)))e^{-(1+i\tau)\textnormal{Im}(\Phi(y,\xi'))} |J(y,\omega)|^{1/2}d\omega dy d\zeta\\
&=: W_{x,k,z}(\xi')^{\mathscr{R}}+ W_{x,k,z}(\xi')^{\mathscr{R}^c}.
\end{align*}
Here, the main integral to study is $ W_{x,k,z}(\xi')^{\mathscr{R}}.$ Indeed, arguing as in \cite[Page 17]{Tao} one can demonstrate that $W_{x,k,z}(\xi')^{\mathscr{R}^c}$ is a smoothing symbol. Indeed, one can estimate this integral over the region $\mathscr{R}^c$ by inserting the cut-off $1-\phi_{-(1/2-\varepsilon)k}(\omega-\omega')$ in the integrand of $W_{x,k,z}(\xi')^{\mathscr{R}^c}.$ Since on $\mathscr{R}^c,$ $|J(y,\omega)|\gg 2^{-\varepsilon k,}$ and in view of the identity 
$$\nabla_\omega \Phi_{x,k,z}(\omega,\zeta,y)=\lambda'(\omega-\omega')\nabla_\omega^2\Phi_\tau(x,\omega),$$
one has that
$$|\nabla_\omega \Phi_{x,k,z}(\omega,\zeta,y)|\gg 2^{k}|\omega-\omega'|\gg 2^{\varepsilon k}2^{k/2}.$$ Repeated integration by parts in the $\omega$-variable provides the factor $2^{-\varepsilon k}2^{-k/2}$ but the differentiation of the  term $\phi_{-(1/2-\varepsilon)}$ gives the factor $2^{-\varepsilon k}2^{k/2}$ which shows that this portion of the integral is $O(2^{-Ck})$ for any $k.$

Note that
\begin{align*}
   W_{x,k,z}(\xi')^{\mathscr{R}} &=  \smallint\smallint\smallint_{\mathscr{R}} e^{2\pi i \Psi_{x,k,z}(\omega, \zeta,y) }s(x,\zeta)\psi(y,\omega)e^{-\mu(y,\omega)\pi i/4} \\
&\times (1-\phi_{-\varepsilon k}(J(y,\omega)))e^{-(1+i\tau)\textnormal{Im}(\Phi(y,\xi'))} |J(y,\omega)|^{1/2}d\omega dy d\zeta\\
&=  \smallint\smallint\smallint_{\mathscr{R}} e^{2\pi i \Psi_{x,k,z}(\omega, \zeta,y) }s(x,\nabla_{x}\Phi_\tau(x,\xi'))\psi(y,\omega)e^{-\mu(y,\omega)\pi i/4} \\
&\times (1-\phi_{-\varepsilon k}(J(y,\omega)))e^{-(1+i\tau)\textnormal{Im}(\Phi(y,\xi'))} |J(y,\omega)|^{1/2}d\omega dy d\zeta\\
+& \smallint\smallint\smallint_{\mathscr{R}} e^{2\pi i \Psi_{x,k,z}(\omega, \zeta,y) }O(2^{C\varepsilon k}2^{-k/2})\psi(y,\omega)e^{-\mu(y,\omega)\pi i/4} \\
&\times (1-\phi_{-\varepsilon k}(J(y,\omega)))e^{-(1+i\tau)\textnormal{Im}(\Phi(y,\xi'))} |J(y,\omega)|^{1/2}d\omega dy d\zeta\\
&=:   W_{x,k,z}(\xi')^{\mathscr{R}_1}+W_{x,k,z}(\xi')^{\mathscr{R}_2}
\end{align*} where 
\begin{align*}
  W_{x,k,z}(\xi')^{\mathscr{R}_1}  & :=  \smallint\smallint\smallint_{\mathscr{R}} e^{2\pi i \Psi_{x,k,z}(\omega, \zeta,y) }s(x,\nabla_{x}\Phi_\tau(x,\xi'))\psi(y,\omega)e^{-\mu(y,\omega)\pi i/4} \\
&\times (1-\phi_{-\varepsilon k}(J(y,\omega)))e^{-(1+i\tau)\textnormal{Im}(\Phi(y,\xi'))} |J(y,\omega)|^{1/2}d\omega dy d\zeta,  
\end{align*}
and 
\begin{align*}
&W_{x,k,z}(\xi')^{\mathscr{R}_2}\\
&:=  \smallint\smallint\smallint_{\mathscr{R}} e^{2\pi i \Psi_{x,k,z}(\omega, \zeta,y) }O(2^{C\varepsilon k}2^{-k/2})\psi(y,\omega)e^{-\mu(y,\omega)\pi i/4} \\
&\times (1-\phi_{-\varepsilon k}(J(y,\omega)))e^{-(1+i\tau)\textnormal{Im}(\Phi(y,\xi'))} |J(y,\omega)|^{1/2}d\omega dy d\zeta.
\end{align*}
To evaluate $ W_{x,k,z}(\xi')^{\mathscr{R}_1}$ we compute the integral with respect to $\zeta$ using the distributional identity $$
\smallint e^{2\pi i(x-y)\cdot \zeta}d\zeta=\delta(x-y),$$  as follows
\begin{align*}
     W_{x,k,z}(\xi')^{\mathscr{R}_1}  & =  \smallint\smallint\smallint_{\mathscr{R}} e^{2\pi i \Psi_{x,k,z}(\omega, \zeta,y) }s(x,\nabla_{x}\Phi_\tau(x,\xi'))\psi(y,\omega)e^{-\mu(y,\omega)\pi i/4} \\
&\times (1-\phi_{-\varepsilon k}(J(y,\omega)))e^{-(1+i\tau)\textnormal{Im}(\Phi(y,\xi'))} |J(y,\omega)|^{1/2}d\omega dy d\zeta\\
 & =  \smallint\smallint\smallint_{\mathscr{R}} e^{2\pi i [\xi'\cdot( \nabla_\xi\Phi_\tau(y,\omega)- \nabla_\xi\Phi_\tau(x,\xi'))+ (x-y)\cdot \zeta] }s(x,\nabla_{x}\Phi_\tau(x,\xi'))\psi(y,\omega)e^{-\mu(y,\omega)\pi i/4} \\
&\times (1-\phi_{-\varepsilon k}(J(y,\omega)))e^{-(1+i\tau)\textnormal{Im}(\Phi(y,\xi'))} |J(y,\omega)|^{1/2}d\omega dy d\zeta; \\
& =  \smallint\smallint_{\mathscr{R}} e^{2\pi i [\xi'\cdot( \nabla_\xi\Phi_\tau(y,\omega)- \nabla_\xi\Phi_\tau(x,\xi')) ] }s(x,\nabla_{x}\Phi_\tau(x,\xi'))\psi(y,\omega)e^{-\mu(y,\omega)\pi i/4} \\
&\times (1-\phi_{-\varepsilon k}(J(y,\omega)))e^{-(1+i\tau)\textnormal{Im}(\Phi(y,\xi'))} |J(y,\omega)|^{1/2} \left(\smallint e^{2\pi i(x-y)\cdot \zeta}d\zeta\right)  d\omega dy  \\
& =  \smallint\smallint_{\mathscr{R}} e^{2\pi i [\xi'\cdot( \nabla_\xi\Phi_\tau(y,\omega)- \nabla_\xi\Phi_\tau(x,\xi')) ] }s(x,\nabla_{x}\Phi_\tau(x,\xi'))\psi(y,\omega)e^{-\mu(y,\omega)\pi i/4} \\
&\times (1-\phi_{-\varepsilon k}(J(y,\omega)))e^{-(1+i\tau)\textnormal{Im}(\Phi(y,\xi'))} |J(y,\omega)|^{1/2} \delta(x-y) dy  d\omega\\
& =  \smallint_{\mathscr{R}} e^{2\pi i \xi'\cdot( \nabla_\xi\Phi_\tau(x,\omega)- \nabla_\xi\Phi_\tau(x,\xi'))  }s(x,\nabla_{x}\Phi_\tau(x,\xi'))\psi(x,\omega)e^{-\mu(x,\omega)\pi i/4} \\
&\times (1-\phi_{-\varepsilon k}(J(x,\omega)))e^{-(1+i\tau)\textnormal{Im}(\Phi(x,\xi'))} |J(x,\omega)|^{1/2}   d\omega\\
& =  s(x,\nabla_{x}\Phi_\tau(x,\xi'))e^{-(1+i\tau)\textnormal{Im}(\Phi(x,\xi'))}  \\
&\times \smallint_{\omega-\omega'=O(2^{\varepsilon k}2^{-k/2})} e^{2\pi i \xi'\cdot( \nabla_\xi\Phi_\tau(x,\omega)- \nabla_\xi\Phi_\tau(x,\xi'))  }\psi(x,\omega)e^{-\mu(x,\omega)\pi i/4}\\
&\hspace{2cm}(1-\phi_{-\varepsilon k}(J(x,\omega))) |J(x,\omega)|^{1/2}   d\omega.
\end{align*}
Note that the integration with respect to $\omega$ is over the region $\{\omega-\omega'=O(2^{\varepsilon k}2^{-k/2})\}$ and then we can estimate this integral by introducing a cutt-off $\phi_{-(\frac{1}{2}-\varepsilon)k}(\omega-\omega').$
In accordance with the principle of stationary phase, we now look at where the phase is stationary in $\omega.$ From the homogeneity of the phase function we have that
\begin{align*}
    \nabla_\xi \Phi_\tau(x,\omega) &=(\nabla_{\overline{\xi}}\Phi_\tau(x, (\omega,1)), \partial_{\xi_n}\Phi_\tau(x,(\omega,1))   )\\
    &=(\nabla_\omega\Phi_\tau(x,\omega), \Phi_\tau(x,\omega)-\omega\cdot \nabla_\omega\Phi(x,\omega)  ).
\end{align*} Hence
\begin{align*}
\nabla_\omega(\xi'\cdot \nabla_\xi \Phi_\tau(x,\omega)  )&=\nabla_\omega(\lambda'\omega'\cdot \nabla_\omega \Phi_\tau(x,\omega)+\lambda'(\Phi_\tau(x,\omega)-\omega\cdot \nabla_\omega\Phi_\tau(x,\omega)  ) )\\
    &=\lambda'(\omega'-\omega)\nabla_\omega^2\Phi_\tau(x,\omega).
\end{align*}So, the only stationary points occurs when $\omega=\omega',$ and the Hessian at this stationary point is given by
\begin{equation}
    \det( \nabla_\omega^2(\xi'\cdot \nabla_\xi \Phi_\tau(x,\omega)  ) )|_{\omega=\omega'}=\det(-\lambda' \nabla_\omega^2\Phi_\tau(x,\omega) )|_{\omega=\omega'}=(-\lambda')^{n-1}J(x,\omega').
\end{equation}Consequently, on the support of  $\phi_{-(\frac{1}{2}-\varepsilon)k}(\omega-\omega'),$ we have the Taylor series expansion
\begin{align*}
    2\pi i \xi'\cdot( \nabla_\xi\Phi_\tau(x,\omega)- \nabla_\xi\Phi_\tau(x,\xi')) =\frac{1}{2}\lambda'\nabla_\omega^2\Phi(x,\omega')(\omega-\omega',\omega-\omega')+O(2^{C\varepsilon}2^{-k/2}).
\end{align*}
Similarly, in view of the Taylor expansion
\begin{align*}
     &\psi(x,\omega)e^{-\mu(x,\omega)\pi i/4}(1-\phi_{-\varepsilon k}(J(x,\omega))) |J(x,\omega)|^{\frac{1}{2}}\\
    &=\psi(x,\omega')e^{-\mu(x,\omega')\pi i/4}(1-\phi_{-\varepsilon k}(J(x,\omega'))) |J(x,\omega')|^{\frac{1}{2}}+O(2^{C\varepsilon}2^{-k/2}),
\end{align*} we can write
\begin{align*}
    & s(x,\nabla_{x}\Phi_\tau(x,\xi'))e^{-(1+i\tau)\textnormal{Im}(\Phi(x,\xi'))}  \\
&\times \smallint_{\omega-\omega'=O(2^{\varepsilon k}2^{-k/2})} e^{2\pi i \xi'\cdot( \nabla_\xi\Phi_\tau(x,\omega)- \nabla_\xi\Phi_\tau(x,\xi'))  }\psi(x,\omega)e^{-\mu(x,\omega)\pi i/4}\\
&\hspace{2cm}(1-\phi_{-\varepsilon k}(J(x,\omega))) |J(x,\omega)|^{\frac{1}{2}}   d\omega\\
&= s(x,\nabla_{x}\Phi_\tau(x,\xi'))e^{-(1+i\tau)\textnormal{Im}(\Phi(x,\xi'))} \psi(x,\omega')e^{-\mu(x,\omega')\pi i/4}(1-\phi_{-\varepsilon k}(J(x,\omega'))) |J(x,\omega')|^{\frac{1}{2}}  \\
&\times \smallint_{\omega-\omega'=O(2^{\varepsilon k}2^{-k/2})} e^{ \pi i \lambda'\nabla_\omega^2\Phi(x,\omega')(\omega-\omega',\omega-\omega') }  \phi_{-(\frac{1}{2}-\varepsilon)k}(\omega-\omega') d\omega. 
\end{align*} In view of the stationary phase principle 
\begin{align*}
    &\smallint_{\omega-\omega'=O(2^{\varepsilon k}2^{-k/2})} e^{ \pi i \lambda'\nabla_\omega^2\Phi(x,\omega')(\omega-\omega',\omega-\omega') }  \phi_{-(\frac{1}{2}-\varepsilon)k}(\omega-\omega') d\omega\\
    &=e^{\mu(x,\omega')\pi i/4}|J(x,\omega')|^{-\frac{1}{2}}(\lambda')^{-(n-1)/2}+O(2^{-Ck}).
\end{align*} Consequently,
\begin{align*}
     &W_{x,k,z}(\xi')^{\mathscr{R}_1}\\
     & =  \smallint\smallint\smallint_{\mathscr{R}} e^{2\pi i \Psi_{x,k,z}(\omega, \zeta,y) }s(x,\nabla_{x}\Phi_\tau(x,\xi'))\psi(y,\omega)e^{-\mu(y,\omega)\pi i/4} \\
&\times (1-\phi_{-\varepsilon k}(J(y,\omega)))e^{-(1+i\tau)\textnormal{Im}(\Phi(y,\xi'))} |J(y,\omega)|^{1/2}d\omega dy d\zeta\\
    &= s(x,\nabla_{x}\Phi_\tau(x,\xi'))e^{-(1+i\tau)\textnormal{Im}(\Phi(x,\xi'))} \psi(x,\omega')e^{-\mu(x,\omega')\pi i/4}(1-\phi_{-\varepsilon k}(J(x,\omega'))) |J(x,\omega')|^{\frac{1}{2}}\\
    &+O(2^{C\varepsilon}2^{-k/2})\\
    &\times (e^{\mu(x,\omega')\pi i/4}|J(x,\omega')|^{-\frac{1}{2}}(\lambda')^{-(n-1)/2}+O(2^{-Ck}))\\
    &= s(x,\nabla_{x}\Phi_\tau(x,\xi'))e^{-(1+i\tau)\textnormal{Im}(\Phi(x,\xi'))} (\lambda')^{-(n-1)/2} \psi(x,\omega')(1-\phi_{-\varepsilon k}(J(x,\omega')) \\
   &+O(2^{C\varepsilon}2^{-k/2}))\\
   &= W^{0}_{x,k,z}(\xi')+O(2^{C\varepsilon}2^{-k/2})).
\end{align*}
Because $W_{x,k,z}(\xi')^{\mathscr{R}_1}$ captures the first term of the Taylor expansion of the symbol $s(x,\zeta)$ around $\zeta_0=\nabla_{z}\Phi_\tau(x,\xi'),$ we have proved that $W_{x,k,z}(\xi')$ is essentially a symbol of order $-(n-1)/2.$ Moreover, the factor $e^{-(1+i\tau)\textnormal{Im}(\Phi(x,\xi'))}\in  S^{0}_{1/2,1/2}$ in $ W^{0}_{x,k,z}(\xi')$ shows that
$$ W^{0}_{x,k,z}(\xi'),\, W_{x,k,z}(\xi')^{\mathscr{R}_1}\in S^{-\frac{n-1}{2}}_{1/2,1/2}.  $$ In consequence, the difference
$$ W_{x,k,z}(\xi')- W^{0}_{x,k,z}(\xi') \in  S^{\mu}_{1/2,1/2}$$
has order $\mu,$ where $\mu\leq -\frac{n-1}{2}. $ Now, what we want to prove is that $\mu$ satisfies the inequality order $\mu\leq -\frac{n-1}{2}-\frac{1}{2}+2\varepsilon.$ Since 
$$W_{x,k,z}(\xi')=W_{x,k,z}(\xi')^{\mathscr{R}_1}+W_{x,k,z}(\xi')^{\mathscr{R}_2}+W_{x,k,z}(\xi')^{\mathscr{R}^c}$$ 
$$ = W^{0}_{x,k,z}(\xi')+W_{x,k,z}(\xi')^{\mathscr{R}_2}+W_{x,k,z}(\xi')^{\mathscr{R}^c} \textnormal{ mod }S^{-\infty}  $$
$$ = W^{0}_{x,k,z}(\xi')+W_{x,k,z}(\xi')^{\mathscr{R}_2} \textnormal{ mod }S^{-\infty}  $$
since $W_{x,k,z}(\xi')^{\mathscr{R}^c}\in S^{-\infty} $ is a smoothing symbol the order $\mu$ corresponds to the order of the symbol  $W_{x,k,z}(\xi')^{\mathscr{R}_2}.$ However, in view of \eqref{remainder:term:final}, one has the estimates
\begin{align*}
    |\partial_{\xi'}^\beta O_{x,\xi',\zeta}(2^{C\varepsilon k}2^{-k/2})|\lesssim O(2^{C|\beta|\varepsilon k}2^{-k/2}2^{-(n-1)k/2}2^{-k|\beta|}).
\end{align*}Since $W_{x,k,z}(\xi')^{\mathscr{R}_2}$ also contains the term $e^{-(1+i\tau)\textnormal{Im}(\Phi(y,\xi'))}\in  S^{0}_{1/2,1/2},$ the amplitude $a(x,y,\xi',\zeta):=O_{x,\xi',\zeta}(2^{C\varepsilon k}2^{-k/2})e^{-(1+i\tau)\textnormal{Im}(\Phi(y,\xi'))}$ satisfies the estimates
 $$ |\partial_{\xi'}^\beta [O_{x,\xi',\zeta}(2^{C\varepsilon k}2^{-k/2})e^{-(1+i\tau)\textnormal{Im}(\Phi(y,\xi'))}]|\lesssim O(2^{C|\beta|\varepsilon k}2^{-k/2}2^{-(n-1)k/2}2^{-k|\beta|/2}).
$$   The previous inequality can be written as
$$ |\partial_{\xi'}^\beta [O_{x,\xi',\zeta}(2^{C\varepsilon k}2^{-k/2})e^{-(1+i\tau)\textnormal{Im}(\Phi(y,\xi'))}]|\lesssim O(2^{C_{\beta}\varepsilon k}2^{-k/2}2^{-(n-1)k/2}2^{-k|\beta|}),
$$ where $$C_{\beta}=|\beta|+|\beta|/2.$$
In this way, the previous estimate says that, omitting the errors $2^{C_{\beta}\varepsilon k},$ $W^{0}_{x,k,z}(\xi')- W_{x,k,z}(\xi')$ is essentially a symbol of order $-\frac{(n-1)}{2}-\frac{1}{2}.$ 

Now, we have prepared the way in order to use Proposition 6.1 of \cite{Tao} for deducing the estimate \eqref{W:errors}. Indeed, our estimate in  \eqref{W:errors} is the analog of  (19) in \cite[Page 19]{Tao} which is a consequence of \cite[Proposition 6.1]{Tao}. For completeness, we present such a proposition as follows.
\begin{lemma}\label{Final:lemma}
    If $V_{x,k,z}:=V_{x,k,z}(\xi')$ is a symbol satisfying estimates of the type 
    \begin{align}
         | \partial_{\xi'}^{\beta}V_{x,k,z}(\xi')|\lesssim 2^{C_{\beta}\varepsilon k}2^{-k/2-(n-1)k/2-k|\beta|},
    \end{align}when $|\xi'|\sim 2^{k},$ then, we have the kernel estimate 
\begin{equation}\label{W:errors:cor}
    \Vert \smallint e^{2\pi i[\Phi_\tau(x,\xi') -\xi'\cdot z]}  V_{x,k,z}(\xi')\eta_{k}(\xi')d\xi'\Vert_{L^1(\mathbb{R}^n_x)}\lesssim 2^{C\varepsilon k}2^{-k/2}.
\end{equation}    
\end{lemma}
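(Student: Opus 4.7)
The plan is to mirror the second dyadic decomposition argument that proved Lemma \ref{Fundamental:deg:lemma}, this time dispensing with the curvature cut-off $\phi_{-\varepsilon k}(J_\tau(x,\omega))$ and working with the genuine Hessian $Q(x,\omega_D):=\nabla_\omega^2\Phi_\tau(x,\omega_D)$ in place of its regularization $(2^{-2\varepsilon k}I+(\nabla_\omega^2\Phi_\tau)^2)^{1/2}$; the local graph condition \eqref{ass} guarantees that $\det Q$ is bounded away from zero on the support of interest. Writing $\xi'=\lambda'(\omega',1)$ in projective coordinates and inserting the continuous partition of unity $1=\smallint \psi_{x,\omega_D}(\omega')/\psi_x(\omega')\,d\omega_D$ from \eqref{phi:x:omega:D}, commuting the $\omega_D$-integration to the outside reduces the problem, by the triangle inequality, to estimating uniformly in $\omega_D$ the $L^1(\mathbb{R}^n_x)$-norm of the dyadic kernel associated with the rescaled symbol $b_{k,\omega_D,x}(\lambda',\zeta)$ obtained from $V_{x,k,z}(\xi')\eta_k(\xi')$ via the change of variables $\omega'=T(\zeta)$ adapted to $Q(x,\omega_D)$.

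The symbol $b_{k,\omega_D,x}$ inherits from $V_{x,k,z}$ two features distinguishing it from its analogue in Lemma \ref{Fundamental:deg:lemma}: its $L^\infty$-size carries the extra decay factor $2^{-k/2}$, and its $\lambda'$- and $\zeta$-derivatives enjoy the $C_\beta$-controlled bounds $|\partial_{\lambda'}^\beta\partial_\zeta^\delta b_{k,\omega_D,x}|\lesssim 2^{C_{\beta,\delta}\varepsilon k}2^{-\beta k}$ with no $(1-\rho)$-loss in $\zeta$. Taylor-expanding the phase around $\omega'=\omega_D$ identifies the natural scale $\beta\sim 2^{-k/2}$: the gradient of $\beta^{-1}(\Phi_\tau(x,T(\zeta))-z\cdot(T(\zeta),1))$ in $\zeta$ is $O(1)$ pointwise, so the non-stationary phase principle (Proposition \ref{Non-stationary:phase}) yields a decay factor of $(\lambda'\beta)^{-N}$ after $N$ integrations by parts. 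Combining this with an $L^1(\mathbb{R}^n_x)$-estimate over the tube
\[
D=\{x:\Phi_\tau(x,\omega_D)-z\cdot(\omega_D,1)=O(\beta),\ \nabla_\omega\Phi_\tau(x,\omega_D)-\overline z=O(\beta)\},
\]
whose volume $|D|\lesssim \beta^{n+1}$ is controlled by the local graph condition, and carrying through the bookkeeping exactly as in the proof of Lemma \ref{Fundamental:deg:lemma} but with the factor $2^{-\varepsilon k}$ there replaced by the better $2^{-k/2}$ coming from the improved order of $V_{x,k,z}$, one obtains the per-piece bound and, after absorbing the compact $\omega_D$-integration and inverting the Jacobian of $T$, the desired inequality \eqref{W:errors:cor}.

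The main obstacle is the careful tracking of the $2^{C_{\beta,\delta}\varepsilon k}$ losses introduced at every application of the chain rule and of non-stationary phase. These losses are precisely what the $C_\beta$-dependence in the hypothesis on $V_{x,k,z}$ is designed to absorb: only a fixed finite number $N$ of differentiations in $\zeta$ is needed to make $(\lambda'\beta)^{-N}\sim 2^{-Nk/2}$ dominate every polynomial factor $2^{C_\ast\varepsilon k}$, provided $\varepsilon$ is taken small enough, so that the final constant $C$ in $2^{C\varepsilon k}$ is finite and depends only on $n$ and on this fixed $N$.
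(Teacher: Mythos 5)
Your proof has a genuine gap, and it stems from a false premise stated at the outset. You claim that ``the local graph condition \eqref{ass} guarantees that $\det Q$ is bounded away from zero on the support of interest,'' where $Q(x,\omega_D):=\nabla_\omega^2\Phi_\tau(x,\omega_D)$ is the bare angular Hessian. This is not what the local graph condition says: \eqref{ass} controls the \emph{mixed} Hessian $\det\partial_x\partial_\xi\Phi_\tau$, whereas $Q$ is the curvature matrix whose determinant is precisely $J_\tau(x,\omega)$, the quantity that the whole degenerate/non-degenerate split is designed to monitor because it \emph{can} vanish. (Take $\Phi_\tau(x,\xi)=x\cdot\xi$: the graph condition holds trivially, yet $\nabla_\omega^2\Phi_\tau\equiv 0$.) Moreover, Lemma \ref{Final:lemma} as stated imposes only pointwise derivative bounds on $V_{x,k,z}$; it carries no support restriction to the region $|J_\tau|\gtrsim 2^{-\varepsilon k}$, so you cannot borrow non-degeneracy from the cut-off $(1-\phi_{-\varepsilon k}(J))$ either. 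With $Q$ possibly singular, the change of variables $\omega'=T(\zeta)$ built from $Q^{-1/2}$ is ill-defined, the rescaled phase gradient is not $O(1)$, and the volume bound $|D|\lesssim\beta^{n+1}$ no longer follows; the whole ellipsoid argument collapses.

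There is also a structural mismatch worth pointing out: the ellipsoid machinery of Lemma \ref{Fundamental:deg:lemma} was introduced specifically to extract the extra $2^{-\varepsilon k}$ gain in the degenerate regime, where the curvature is small and the regularization $Q=(2^{-2\varepsilon k}I+(\nabla_\omega^2\Phi_\tau)^2)^{1/2}$ makes the construction uniform. For Lemma \ref{Final:lemma} no such gain is needed --- the extra $2^{-k/2}$ in the conclusion already comes from the improved order $-n/2$ (modulo $\varepsilon$-losses) encoded in the hypothesis on $V_{x,k,z}$, not from any curvature control. The correct argument is therefore the standard second dyadic decomposition of $\omega'$ into $\sim 2^{(n-1)k/2}$ \emph{disks} of radius $2^{-k/2}$ in the Seeger--Sogge--Stein style: on each disk one uses non-stationary phase away from the influence tube and a trivial $L^\infty$--volume estimate on it, and the count of disks times the per-disk $L^1_x$ bound gives $2^{C\varepsilon k}2^{-k/2}$. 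This is exactly the content of Proposition 6.1 in \cite{Tao}, and the paper states Lemma \ref{Final:lemma} only as a restatement of that result for the reader's convenience, without reproving it. If you want a self-contained proof, you should run the disk-based argument, not import the ellipsoid construction; and if you insist on the ellipsoid route, you must keep the $2^{-2\varepsilon k}I$ regularization in $Q$ --- which you explicitly removed --- and accept that it buys you nothing here.
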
 So, note that for the validity of \eqref{W:errors}
 we have used Lemma \ref{Final:lemma} applied to $V_{x,k,z}(\xi'):=W^{0}_{x,k,z}(\xi')- W_{x,k,z}(\xi').$ The proof of the $L^1$-boundedness of $E$ is complete.

\begin{proof}[Proof of the main Theorem \ref{Main:theorem:2}] With the notation in Subsection \ref{Degenerate:non:degenerate:components} the operator $T$ has been decomposed into its degenerate and non-degenerate components, $T_{\textnormal{deg}}$ and $T_{\textnormal{non-deg}},$ respectively. In Subsection \ref{Boundedness of the degenerate component} in Proposition \ref{Proposition:deg} we have proved that the degenerate operator  $T_{\textnormal{deg}}$ is bounded on $L^1.$ As for the non-degenerate operator $T_{\textnormal{non-deg}},$ we have constructed in Subsubsection \ref{Construction of the operatorsA:S} a Fourier integral operator $A,$ which is an average operator, of the same order that $T$ and a pseudo-differential operator $S$ of order zero. We have proved, that the factorisation approach introduced in Tao \cite{Tao} also operates in our case, namely, that $A$ is bounded on $L^1,$ (see Proposition \ref{boundedness:A}) $S$ is of weak (1,1) type in view of the standard Calder\'on-Zygmund theory, and that the error operator  $E=T_{\textnormal{nondeg}}-SA$ is bounded on $L^1,$ see Subsection \ref{L1:boundedness:remaninder}. Since $T=T_{\textnormal{deg}}+SA+E,$ then $T$ is of weak (1,1) type. The proof of Theorem \ref{Main:theorem:2} is complete.    
\end{proof}

\section{Final remarks}\label{Final:rem}

\subsection{Applications}
In order to illustrate the applications of Theorem \ref{Main:theorem:2}   to {\it a priori} estimates for the Cauchy problem with complex characteristics we consider the case where the associated operator is a classical pseudo-differential operator of order one. Let us introduce some required notation.  Let $(X,g)$ be a compact Riemmanian manifold of dimension $n$ and let us consider $T>0.$ Denote by $\Delta_g$ the positive Laplacian associated to the metric $g.$
Denote the space $L^{1}_s(X)$ defined by the norm
\begin{equation*}
    \Vert f\Vert_{L^{1}_s(X)}=\Vert(1+\Delta_g)^{\frac{s}{2}}f\Vert_{L^1(X)}.
\end{equation*}We also consider the {\it weak} $L^1_s$-space defined by the seminorm $$\Vert f\Vert_{L^{1,\infty}_s(X)}=\Vert(1+\Delta_g)^{\frac{s}{2}}f\Vert_{L^{1,\infty}(X)},$$ and by $L^{1,\infty}_{s,\textnormal{loc}}(X),$ its local version, namely the family of functions $g$ such that for any compactly supported function $\phi\in C^{\infty}_0(X),$ $[(1+\Delta_g)^{\frac{s}{2}}g]\phi\in L^{1,\infty}(X).$

Denote $\partial_t$ the partial derivative with respect to the time variable $t\in [0,T],$ and let $D_t=-i\partial_t.$ Let us consider the Cauchy problem for a first-order classical pseudo-differential operator $A,$
\begin{equation}\label{Cauchy:problem:order:m:II}\begin{cases}D_tv-A v=0 ,& (t,x)\in [0,T]\times X,
\\ v(0,x)=f(x) .&   \end{cases}
\end{equation} We assume that its principal symbol $a(t,x,\xi),\,(t,x,\xi)\in [0,T]\times (T^{*}X\setminus \{0\}),$ has non-negative imaginary part, namely, $a$ satisfies $$\textnormal{Im}[a(t,x,\xi)]> 0, \,|\xi|\neq 0.$$ The solution operator $U(t)$ of \eqref{Cauchy:problem:order:m:II} can be realised as a Fourier integral operator of order zero with a global complex phase function of positive type, see \cite[Page 117]{Ruzhansky:CWI-book} and also Laptev, Safarov and Vassiliev \cite{Laptev:Safarov:Vassiliev} for details. Applying (essentially) Theorem \ref{Main:theorem:2} to the operator $$(1+\Delta_g)^{\frac{s}{2}}U(t)(1+\Delta_g)^{-\frac{n-1}{4}-\frac{s}{2} }$$ of order $-(n-1)/2,$ we have that for any compactly supported initial datum $f\in L^1_{s+\frac{n-1}{2}}(X)$, and for a fixed time $t\in [0,T],$ the solution $v$ of the Cauchy problem \eqref{Cauchy:problem:order:m:II} satisfies $v(t,\cdot)\in L^{1,\infty}_{s,\textnormal{loc}}(X).$ This analysis proves the following consequence of Theorem \ref{Main:theorem:2}.
\begin{corollary}\label{Cor:1} Let $s\in \mathbb{R},$ and let $f\in L^1_{s+\frac{n-1}{2}}(X)$ be compactly supported. Then, for a fixed $t\in [0,T]$, the solution $v$ of the Cauchy problem \eqref{Cauchy:problem:order:m:II} satisfies $v(t,\cdot)\in L^{1,\infty}_{s,\textnormal{loc}}(X).$    
\end{corollary}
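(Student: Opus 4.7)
The plan is to reduce the Corollary to a direct application of Theorem \ref{Main:theorem:2} by conjugating the solution operator $U(t)$ with suitable powers of the elliptic operator $(1+\Delta_g),$ thereby producing a Fourier integral operator of order exactly $-(n-1)/2.$

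First, I would invoke the global construction of Ruzhansky \cite{Ruzhansky:CWI-book} (see also Laptev, Safarov and Vassiliev \cite{Laptev:Safarov:Vassiliev}) to realise $U(t),$ for any fixed $t\in [0,T],$ as a Fourier integral operator of order zero whose canonical relation is globally parametrised by a single complex phase function of positive type satisfying the local graph condition \eqref{ass}. The strict positivity $\textnormal{Im}(\Phi)>0$ away from $\xi=0$ required in Theorem \ref{Main:theorem:2} follows from the hypothesis $\textnormal{Im}[a(t,x,\xi)]>0$ on the principal symbol of $A,$ which is precisely what drives the positivity of the imaginary part of the phase produced by the Hamilton--Jacobi method with complex characteristics.

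Next, for $s\in\mathbb{R}$ fixed, I would introduce the conjugated operator
\begin{equation*}
    T_{s,t} := (1+\Delta_g)^{s/2}\, U(t)\, (1+\Delta_g)^{-(n-1)/4 - s/2}.
\end{equation*}
Since $(1+\Delta_g)^{\sigma/2}$ is a classical elliptic pseudo-differential operator of order $\sigma,$ the composition calculus of Fourier integral operators with complex phases shows that $T_{s,t}$ is again a Fourier integral operator, of total order $s + 0 + \bigl(-\tfrac{n-1}{2}-s\bigr) = -\tfrac{n-1}{2},$ associated with the same canonical relation, the same complex phase and the same local graph condition as $U(t).$ An application of Theorem \ref{Main:theorem:2} (after the standard localisation on $X$ via a partition of unity) then yields that $T_{s,t}$ maps $L^{1}_{\textnormal{comp}}(X)$ boundedly into $L^{1,\infty}_{\textnormal{loc}}(X).$

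Finally, I would unwind the Sobolev definitions. Setting $g:=(1+\Delta_g)^{(n-1)/4 + s/2}f,$ the hypothesis $f\in L^{1}_{s+(n-1)/2}(X)$ reads $g\in L^{1}(X),$ while the operator identity
\begin{equation*}
    T_{s,t}g \;=\; (1+\Delta_g)^{s/2}\,U(t)f \;=\; (1+\Delta_g)^{s/2}\,v(t,\cdot),
\end{equation*}
combined with the weak $(1,1)$ bound on $T_{s,t},$ yields $(1+\Delta_g)^{s/2}v(t,\cdot)\in L^{1,\infty}_{\textnormal{loc}}(X),$ which by definition is exactly the desired statement $v(t,\cdot)\in L^{1,\infty}_{s,\textnormal{loc}}(X).$ The main obstacle I anticipate is verifying that $U(t)$ does admit a single globally defined complex phase of \emph{strictly} positive type uniformly in $t,$ and that the local graph condition is genuinely preserved under composition with elliptic pseudo-differential operators; both points are provided by the cited global constructions, with the passage to the compact manifold via a partition of unity being essentially routine.
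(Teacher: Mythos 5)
Your argument follows essentially the same route as the paper's own proof: you conjugate $U(t)$ by $(1+\Delta_g)^{s/2}$ and $(1+\Delta_g)^{-(n-1)/4-s/2}$ to produce a Fourier integral operator of order $-(n-1)/2$ with a global complex phase of positive type (citing the same Ruzhansky and Laptev--Safarov--Vassiliev constructions), then apply Theorem \ref{Main:theorem:2} after localisation by a partition of unity and unwind the Sobolev norms. The only difference is cosmetic: the paper carries out the localisation on the compact manifold $X$ explicitly, writing $T_{s,t}M_{\phi_i}$ and summing the resulting weak $(1,1)$ estimates, whereas you summarise that step as routine.
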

\begin{proof}
    We write $v(x,t)=U(t)f(x),$ namely, the solution $v$ of \eqref{Cauchy:problem:order:m:II} in terms of the solution operator $U(t).$ Since $f\in L^1_{s+\frac{n-1}{2}}(X), $ we have that 
    $$ (1+\Delta_g)^{\frac{n-1}{4}+\frac{s}{2}}f\in L^{1}(X).    $$ We can use the factorisation
    \begin{align*}
        (1+\Delta_g)^{\frac{s}{2}}v= (1+\Delta_g)^{\frac{s}{2}} U(t)(1+\Delta_g)^{-\frac{n-1}{4}-\frac{s}{2} } (1+\Delta_g)^{\frac{n-1}{4}+\frac{s}{2}}f,
    \end{align*} and the fact that $(1+\Delta_g)^{\frac{s}{2}}U(t)(1+\Delta_g)^{-\frac{n-1}{4}-\frac{s}{2} }$ is a Fourier integral operator with complex phase of order $-(n-1)/2$\footnote{See \cite[Page 117]{Ruzhansky:CWI-book} and also Laptev, Safarov and Vassiliev \cite{Laptev:Safarov:Vassiliev} for details.} to prove  that $v(t,\cdot)\in L^{1,\infty}_{s,\textnormal{loc}}(X).$ Indeed, if $\{\phi_i\}_{i=1}^{N}$ denotes a partition of unity subordinated to a finite open covering $\{O_{i}\}_{i=1}^{N}$  of the compact manifold $X,$ then  $\sum_{i=1}^{N}\phi_i=1,$ and $\textnormal{supp}[\phi_i]\subset O_i.$ We can assume that the covering $\{O_{i}\}_{i=1}^{N}$ is an atlas for the manifold $X,$ and then each $O_{i}$ is diffeomorphic to an Euclidean open set $\tilde{O}_i\subset\mathbb{R}^n,$ $n=\dim(X).$   Note that $(1+\Delta_g)^{\frac{s}{2}} U(t)(1+\Delta_g)^{-\frac{n-1}{4}-\frac{s}{2} }M_{\phi_{i}}$ is a Fourier integral operator of order $-(n-1)/2$ that can be microlocalised to a Fourier integral operator whose symbol is compactly supported in the spatial variable. Here, $M_{\phi_{i}}g=\phi_{i}g$ denotes the multiplication operator by the function $\phi_{i}.$  Then, Theorem \ref{Main:theorem:2} imply that for any $\psi\in C^{\infty}_0(X),$ we have that
    \begin{align*}
       & \Vert \psi (1+\Delta_g)^{\frac{s}{2}}v \Vert_{L^{1,\infty}(X)}\\
       &=  \Vert\psi (1+\Delta_g)^{\frac{s}{2}} U(t)(1+\Delta_g)^{-\frac{n-1}{4}-\frac{s}{2} } (1+\Delta_g)^{\frac{n-1}{4}+\frac{s}{2}}f \Vert_{L^{1,\infty}(X)}\\
        &\lesssim_N\sum_{i=1}^{N} \Vert\psi (1+\Delta_g)^{\frac{s}{2}} U(t)(1+\Delta_g)^{-\frac{n-1}{4}-\frac{s}{2} }M_{\phi_i} (1+\Delta_g)^{\frac{n-1}{4}+\frac{s}{2}}f \Vert_{L^{1,\infty}(X)}\\
        &\leq C_{\psi,N}\Vert (1+\Delta_g)^{\frac{n-1}{4}+\frac{s}{2}}f\Vert_{L^{1}(X)}\lesssim_{\phi,N}\Vert f\Vert_{L^{1}_{s+\frac{n-1}{2}}(X)},
    \end{align*}which proves Corollary \ref{Cor:1}.
\end{proof}
We also observe that a similar analysis can be adapted to investigate the mapping properties of the parametrix of other first-order complex partial differential operators, see e.g. \cite[Page 120]{Ruzhansky:CWI-book} for a problem that appears naturally in the study of the oblique derivative problem. This was observed e.g. by Melin and Sj\"ostrand in \cite{Melin:Sjostrand1975}. For other applications of the theory of Fourier integral operators with complex phases to {\it a priori} estimates for the Cauchy problem with complex characteristics we refer to Trèves \cite[Chapter XI]{Treves:1980:Vol2} and to \cite[Chapter 5]{Ruzhansky:CWI-book}.   

\subsection{Conclusions and bibliographical discussion}

In this work we have analysed the weak (1,1) boundedness of Fourier integral operators with complex-valued phase functions. As in the setting of real-valued phase functions regarding the weak (1,1) boundedness due to Tao in \cite{Tao}, our estimate is local in the sense that we have considered the symbol $a$ to be compactly supported in the $x$-variable, following the previous approaches, see Seeger, Sogge and Stein \cite{SSS}, Beals \cite{Beals1982}, and the monograph of the second author about the mapping properties of Fourier integral operators with complex phases \cite{Ruzhansky:CWI-book}. Among other things, the sharpness of the order $-(n-1)/2$ for the weak (1,1) inequality of elliptic Fourier integral operators has been discussed by the authors in \cite{CardonaRuzhansky:2022}, and we also refer to \cite{Ruzhansky1999} for the sharpness of Seeger-Sogge-Stein orders regarding the $L^p$-boundeness of these operators.      

There has been in the last decades a wide interest in the analysis of the global continuity properties for Fourier integral operators. This problem can be traced back to the work of Asada and  Fujiwara \cite{Asada1978} and of Peral \cite{Peral1980} and Miyachi \cite{Miyachi1980}. We refer to \cite{Ruzhansky2006}, \cite{Ruzhansky:Sugimoto} and \cite{Coriasco:Ruzhansky,Coriasco:Toft2016} for global properties on $L^p$-spaces. As for global criteria for Fourier integral operators on H\"ormander classes removing the calculus condition $\delta<\rho,$ and allowing the ranges $0\leq \delta<1$ and $0< \rho\leq 1,$ we refer the reader to Dos Santos Ferreira and Staubach \cite{Dos:Santos2014},  Kenig and Staubach \cite{KenigStaubach} and the works of Staubach and his collaborators \cite{Staubach2021,Staubach,Rodriguez:Lopez2015,Rodriguez:Lopez}. We also refer to the seminal work of \'Alvarez and Hounie \cite{Hounie} for the boundedness of pseudo-differential operators without the condition $\delta<\rho,$ which was the starting point for the aforementioned results in the setting of Fourier integral operators. For the global parametrization of Lagrangian distributions we refer the reader to Laptev, Safarov, and Vassiliev \cite{Laptev:Safarov:Vassiliev} and to \cite{Ruzhansky:CWI-book}. We also refer to the work \cite{CardonaRuzhansky:MSJ:Japan} for the global definition of Fourier integral operators on compact Lie groups in terms of the group Fourier transform.

The theory of Fourier integral operators with complex phases was developed by  Melin and Sj\"ostrand in \cite{Melin:Sjostrand1975} motivated by the problem of the construction of parametrices for operators of principal type with non-real principal symbols, see for instance \cite{Melin:Sjostrand1976}. The singularities of the Bergman kernel can be also approximated in terms of asymptotic expansions of certain Fourier integral operators with complex-valued functions, see Boutet de Monvel and Sj\"ostrand \cite{Boutet:de:Monvel82}. For a systematic analysis of the Bergman kernel we refer to Fefferman \cite{Fefferman1974,Fefferman1976}.

There has been also wide activity concerning the smoothing estimates for Fourier integral operators after the proof of Bourgain and Demeter \cite{Bourgain:Demeter:2015} of the decoupling conjecture. The smoothing effect for Fourier integral operators was conjectured by Sogge in \cite{Sogge1991}. It will be difficult to review the literature about this problem here, but in what follows we share some works in this direction. Indeed, we refer the reader to Bourgain \cite{Bourgain1991}, Minicozzi and Sogge \cite{Minicozzi:Sogge}, Guth, Wang and Zhang \cite{GuthWangZhang}, Beltran, Hickman and Sogge \cite{BeltranHickmanSogge}, to Gao, Liu, Miao, and Xi \cite{GaoLiuMiaoXi2023} and references therein for details.
\\

\noindent\textbf{Conflict of interests statement - Data Availability Statements}  The authors state that there is no conflict of interest.  Data sharing does not apply to this article as no datasets were generated or
analysed during the current study.
\subsection{Acknowledgement} The authors thank Terence Tao for suggesting the factorisation approach used in this manuscript.

\bibliographystyle{amsplain}

\end{document}